\documentclass[12pt]{amsart}
\usepackage{amssymb}
\usepackage{tikz}
\usetikzlibrary{arrows,matrix,positioning,automata,shapes,calc}
\usetikzlibrary{decorations.pathreplacing,calligraphy}

\usepackage{mathtools}
\usepackage{graphicx} 
\usepackage{xcolor}
\usepackage{tabstackengine}
\stackMath

\def\>{\relax\ifmmode\mskip.666667\thinmuskip\relax\else\kern.111111em\fi}
\def\:{\relax\ifmmode\mskip.333333\thinmuskip\relax\else\kern.0555556em\fi}
\def\<{\relax\ifmmode\mskip-.333333\thinmuskip\relax\else\kern-.0555556em\fi}
\def\?{\relax\ifmmode\mskip-.666667\thinmuskip\relax\else\kern-.111111em\fi}
\def\vsk#1>{\vskip#1\baselineskip}
\def\vv#1>{\vadjust{\vsk#1>}\ignorespaces}
\def\vvn#1>{\vadjust{\nobreak\vsk#1>\nobreak}\ignorespaces}
 \let\alb\allowbreak

\def\plait#1{\par\hangindent2\parindent\indent\kern\parindent
	\llap{#1\enspace}\ignorespaces}

\let\Smallskip\smallskip
\def\smallskip{\par\Smallskip}
\let\Medskip\medskip
\def\medskip{\par\Medskip}
\let\Bigskip\bigskip
\def\bigskip{\par\Bigskip}

\let\Maketitle\maketitle
\def\maketitle{\Maketitle\thispagestyle{empty}\let\maketitle\empty}

\newtheorem{thm}{Theorem}[section]
\newtheorem{cor}[thm]{Corollary}
\newtheorem{lem}[thm]{Lemma}
\newtheorem{prop}[thm]{Proposition}

\newtheorem{rem}[thm]{Remark}

\def\R{\mathbb R}
\def\C{\mathbb C}
\def\Z{\mathbb Z}

\def\gl{\mathfrak{gl}}

\def\g{\mathfrak{g}}

\let\on\operatorname

\def\diag{\on{diag}}
\def\End{\on{End}}

\def\Hom{\on{Hom}}
\def\id{\on{id}}

\def\Res{\on{Res}}

\def\gr{\on{gr}}

\def\qdet{\on{qdet}}

\let\alb\allowbreak
\def\lsym#1{#1\alb\dots\relax#1\alb} \def\lc{\lsym,}

\def\gln{\gl_n}
\def\Un{U(\gln)}
\def\Yn{Y(\gln)}

\def\glm{\gl_m}
\def\Um{U(\glm)}
\def\Ym{Y(\glm)}

\def\glnm{\gl_{n+m}}
\def\Unm{U(\glnm)}
\def\Ynm{Y(\glnm)}

\title{$(\gl_n,\gl_m)$-duality and Olshanski homomorphism}
\author[B\<.\,Feigin]{B\<.\,Feigin$\:^\diamond$}
\thanks{$\kern-\parindent^\diamond$ E\:-mail: borfeigin@gmail.com}

\author[L\<.\,Rybnikov]{L\<.\,Rybnikov$\:^\circ$}
\thanks{$\kern-\parindent^\circ$ E\:-mail: leonid.rybnikov@umontreal.ca}

\author[F\<.\,Uvarov]{F\<.\,Uvarov$\:^\star$}
\thanks{\noindent$^\star$ Corresponding author, E\:-mail: fuvarov@hse.ru}

\begin{document}

\maketitle

\begin{center}
\vsk-.2>
{\it $^{\diamond\:\star}\?$  National Research University Higher School of Economics, Faculty of Mathematics, \\ 
6 Usacheva str., Moscow, 119048, Russia\/}

\medskip
{\it $^\circ\?$ Department of Mathematics and Statistics,
University of Montreal,\\ Montreal QC, Canada \/}

\medskip
{\it $^\diamond\?$ Hebrew University of Jerusalem, Einstein Institute of Mathematics, \\ Givat Ram. Jerusalem, 9190401, Israel  \/}
\end{center}

\begin{abstract}
    We show that the images of the Bethe subalgebras of the Yangians $\Yn$ and $\Ym$ under the homomorphisms to $U(\gl_{n+m})$ given by the Olshanski centralizer construction coincide. We use this result to obtain the $(\gl_n,\gl_m)$-duality of the trigonometric Gaudin model and the XXX-spin chain. The duality is obtained in an explicit way relating the generating differential operator on one side and the generating difference operator on the other, thus agreeing with the result of Mukhin, Tarasov and Varchenko \cite{MTV4}.
\end{abstract}

\section{Introduction}
\subsection{}\label{1.1} Two quantum integrable systems, one associated to the Lie algebra $\gln$, the other to the Lie algebra $\glm$ are called $(\gln, \glm)$-dual in the following situation. They have the same space of states carrying both $\gln$- and $\glm$-symmetry, and the algebras of their quantum integrals of motion (called \emph{Bethe algebras}) in this representation coincide. 

The systems that are involved (or expected to be involved) in the $(\gln,\glm)$-duality can be included in the following diagram.
\vspace{1cm}

\begin{center}
\begin{tikzpicture}[scale=1.2, 
roundnode/.style={circle, draw=green!60, fill=green!5, very thick, minimum size=7mm},
squarednode/.style={rectangle, draw=black!60, dashed, minimum size=5mm},
]
    \node(1) at (0,0) {rational Gaudin};
    \node(1a) at (0.1,-0.2) {};
    \node(1b) at (-0.1,-0.2) {};
    \draw [<->] (1b.south) to[out=-135, in=-45,looseness=12] (1a.south); 
    \node(2) at (-2,-2) {XXX-spin chain};
    \node(3) at (2,-2) {trigonometric Gaudin};
    \draw [<->] (2) to (3); 
    \node(4) at (0,-4) {XXZ-spin chain};
    \node(4a) at (0.1,-4.2) {};
    \node(4b) at (-0.1,-4.2) {};
    \draw [<->] (4b.south) to[out=-135, in=-45,looseness=12] (4a.south);
    \node(4c) at (0.5,-3.8) {};
    \node(4d) at (-0.5,-3.8) {};
    \node(1c) at (0.5,-0.2) {};
    \node(1d) at (-0.5,-0.2) {};
    \node[squarednode] at (0,0.5) {\scalebox{0.7}{$U(\g [t])$}};
    \node[squarednode] at (-2,-1.5) {\scalebox{0.7}{$Y(\g)$}};
    \node[squarednode] at (2,-1.5) {\scalebox{0.7}{$U(\mathfrak{b}_{+}+t\g [t])$}};
    \node[squarednode] at (0,-3.56) {\scalebox{0.7}{$U_{q}(\widehat{\g})$}};
    \node(10) at (0,-2.6) {\scalebox{1.4}{$\star$}};
    \draw [dashed, ->] (0,-3.2) to (0,-2.73);
    \node(2a) at (-1.7,-2) {};
    \node(3a) at (1.7,-2) {};
    \draw [dashed, ->] (10.north)+(-0.2,-0.2) to (-1.5,-2.2);
    \draw [dashed, ->] (10.north)+(0.2,-0.2) to (1.5,-2.2);
    \draw [dashed, ->] (-1.4,-1.4) to (-0.3,-0.3);
    \draw [dashed, ->] (1.15,-1.15) to (0.3,-0.3);
    \draw [dashed, ->] (-0.6,-3.4) to (-1.7,-2.3);
    \draw [dashed, ->] (0.6,-3.4) to (1.7,-2.3);
\end{tikzpicture}
\end{center}
In the dashed boxes, we indicate the algebras, which are crucial for the construction of the corresponding systems, that is, the space of states is naturally a module over this algebra and the quantum integrals of motion come from some universal commutative (Bethe) subalgebra in it. The dashed arrows on the diagram indicate that one integrable system can be degenerated to another, and the solid double arrows connect the dual systems if for one system in the dual pair, we take $\g=\gln$, and for another, $\g=\glm$. Therefore, the duality corresponds to the reflection of the diagram with respect to the vertical line passing through the middle. In this paper, we introduce a system that is denoted by the symbol {\scalebox{1.4}{$\star$}} on the diagram and show that it is self-dual. Then we consider a degeneration of our result and establish the duality between the XXX-spin chain and the trigonometric Gaudin model.

\subsection{} First observations concerning the $(\gln, \glm)$-duality in quantum integrable models were made by V. Tarasov and A. Varchenko in their work \cite{TV4}, where it was shown that if we consider the $\gln\oplus\glm$-module $S(\C^{n}\otimes \C^{m})$, then the images of the rational Gaudin Hamiltonians associated with $\gln$ coincide with the images of the Dynamical Hamiltonians associated with $\glm$. Since the Dynamical Hamiltonians are elements in the Bethe algebra of the Gaudin model, this was the first indication of the self-duality of the rational Gaudin model that was established later in \cite{MTV1}. In the work \cite{TV4}, the authors also observed a similar relationship between the trigonometric Gaudin Hamiltonians and the XXX-Dynamical Hamiltonians, which suggested the duality between the trigonometric Gaudin model and the XXX-spin chain. The proof of the latter duality is one of the new results that we present in this paper. 

\subsection{} The Hamiltonians of all three systems mentioned in the previous paragraph can be diagonalized using the Bethe ansatz method. In the framework of this method, one starts with a vector space of functions in a variable $u$ with prescribed singularities and behavior around these singularities and uses a certain procedure to construct an eigenvector, see \cite{MV}. It was discovered in \cite{MTV2} that a version of the Fourier transform provides a bijection between spaces describing eigenvectors in $S(\C^{n}\otimes \C^{m})$ of rational $\gln$-Gaudin Hamiltonians and dual to them $\glm$-Dynamical Hamiltonians.  This observation motivated the main result of the paper \cite{MTV1}, which states that the generating differential operator of the image of the Bethe algebra of the rational $\gl_{n}$-Gaudin model in $\End(S(\C^{n}\otimes \C^{m}))$  can be obtained from the similar differential operator for the rational $\gl_{m}$-Gaudin model by applying the following antiautomorphism of the algebra of linear differential operators in $u$ with polynomial coefficients:
\begin{equation*}
    \partial_{u}\mapsto u,\quad u\mapsto\partial_{u},
\end{equation*}
where $\partial_{u}$ is the derivative with respect to $u$. 

One of the main results of the current paper, Theorem \ref{main3}, establishes a similar relationship between the generating differential operator for the Bethe algebra of the trigonometric $\gln$-Gaudin model and the generating difference operator for the Bethe algebra of the XXX-spin chain associated to $\glm$. The corresponding antihomomorphism is given by
\begin{equation*}
    u\partial_{u}\mapsto u,\quad u\mapsto\tau,
\end{equation*}
where $\tau$ is the shift operator: $(\tau f)(u)=f(u+1)$. This result is motivated by the work \cite{MTV4}, where it was shown that a version of the Mellin transform provides a bijection between spaces of functions describing eigenvectors in $S(\C^{n}\otimes \C^{m})$ of trigonometric $\gln$-Gaudin Hamiltonians and dual to them XXX-Dynamical Hamiltonians associated to $\glm$. 

We deduce Theorem \ref{main3} as a degeneration of a $(\gln,\glm)$-duality statement for a new quantum integrable system coming from the Olshanski centralizer construction. 

\subsection{} The integrals of our new integrable system come from the commutative subalgebra $\mathcal{A}(\chi)\subset \Unm$ which lies in invariants of the adjoint action of $GL_n\times GL_m$. Namely, this is the quantum \emph{shift of argument subalgebra} of \cite{Ryb2} with the Cartan parameter $\chi = \diag(1,\ldots,1,0,\ldots,0)$. By the general theorem of Olshanski \cite{Olsh,Mol1}, the subalgebra of adjoint $GL_n\times GL_m$-invariants in $\Unm$ is the image of $GL_n$-invariants in the Yangian $\Yn$, and, at the same time, the image of $GL_m$-invariants $\Ym$ under some explicit homomorphisms. Moreover, by \cite{IR}, the images of \emph{Bethe subalgebras}, $\mathcal{B}_{n}$ in $\Yn$ and $\mathcal{B}_{m}$ in $\Ym$, lie in $\mathcal{A}(\chi)$. The generators of the Bethe subalgebra in the Yangian $\Yn$ are coefficients of the difference operator $D_n$ on the affine line. We consider the difference operators $\overline{D}_{n}$ and $\overline{D}_{m}$ with the coefficients in $\mathcal{A}(\chi)\subset\Unm$ being the (appropriately normalized) images of $D_n$ and $D_m$, respectively, under the Olshanski homomorphisms to $\Unm$. 

Our main result is Theorem \ref{main1}, which identifies two difference operators $\overline{D}_{n}$ and $\overline{D}_{m}$ via an antihomomorphism given by 
 \begin{equation}\label{difference transform}
 \begin{split}
      & (u-n+1)d  \mapsto (u+n), \\
      & u+m \mapsto (u-m+1)d,
 \end{split}
 \end{equation}
 where $d=\tau-1$.

 \subsection{} After establishing the duality on the level of $\Unm$ in Theorem \ref{main1}, we consider the action of $\Unm$ on the Verma module $M_{\lambda}$ of the generic highest weight $\lambda$. The upper-left and lower-right block Lie subalgebras $\gln$ and $\glm$ act on $M_{\lambda}$. The coefficients of the difference operators $\overline{D}_{n}$ and $\overline{D}_{m}$ act on the subspace $M^{sing}_{\lambda}[\nu,\mu]$ of vectors of $\gln$-weight $\nu$ and $\glm$-weight $\mu$, which are singular with respect to both $\gln$ and $\glm$. Using the representation theory of Yangians, we construct an isomorphism of vector spaces $M^{sing}[\nu,\mu]\cong S^{(n)}(b)[a]$, where $S^{(n)}(b)[a]$ is a $\gln$-weight subspace of weight $a=(a_{1}\lc a_{n})$ of the $\gln$-module
 \[S^{(n)}(b)=S^{b_{1}}\C^{n}\otimes S^{b_{2}}\C^{n}\otimes\dots\otimes S^{b_{m}}\C^{n}\]
 for some $a$ and $b=(b_{1}\lc b_{m})$ depending on $\lambda$, $\mu$, and $\nu$. We also construct an isomorphism $M^{sing}[\nu,\mu]\cong S^{(m)}(a)[b]$, where $S^{(m)}(a)[b]$ is defined similarly to $S^{(n)}(b)[a]$. Combining the two isomorphisms together, we get an isomorphism $\Psi_{\lambda}: S^{(m)}(a)[b]\rightarrow S^{(n)}(b)[a]$. 

 The map $\Psi_{\lambda}$ should be considered as a deformation of the natural isomorphism $\Psi: S^{(m)}(a)[b]\rightarrow S^{(n)}(b)[a]$ given via the identification of the $\glm$-submodule $S^{(m)}(a)[b]$ and $\gln$-submodule $S^{(n)}(b)[a]$ inside $S(C^{n}\otimes\C^{m})$. It would be interesting to relate $\Psi_{\lambda}$ to the construction in the work \cite{Zhang} on $(U_{q}(\gln), U_{q}(\glm))$-duality. 

 The duality on the level of $\Unm$ implies that the map $\Psi_{\lambda}$ intertwines the action of a commutative subalgebra of $\End (S^{(m)}(a)[b])$ and the action of a similar commutative subalgebra in $\End (S^{(n)}(b)[a])$, see Theorem \ref{duality rep} in the text. The elements of these commutative subalgebras are quantum integrals of the self-dual quantum integrable system we denoted by {\scalebox{1.4}{$\star$}} in the above diagram. It is obtained as a deformation of the XXX-model, similar to the deformation, which one uses to obtain trigonometric Gaudin model from the rational Gaudin model. Namely, we can consider the universal Gaudin Bethe algebra $\mathcal{B}^{G}_{n}\subset U(\gln[t])$, then the Hamiltonians of the rational Gaudin model are given as elements of the projection of $\mathcal{B}_{n}^{G}$ under its action on certain finite-dimensional $U(\gln [t])$-module $V$ (which is usually a tensor product of evaluation modules), see \cite{Tal}, \cite{MTV6}. On the other hand, we can consider the action of $\mathcal{B}_{n}^{G}$ on $V\otimes M^{(n)}_{\lambda^{(n)}}$, where $M^{(n)}_{\lambda^{(n)}}$ is the $\gln$-Verma module of generic highest weight $\lambda^{(n)}$, and restrict it to the subspace $(V\otimes M^{(n)}_{\lambda^{(n)}})^{sing}[\nu]$ of singular vectors of weight $\nu$. We have an isomorphism $(V\otimes M^{(n)}_{\lambda^{(n)}})^{sing}[\nu]\cong V[\nu-\lambda^{(n)}]$, and it is known that the trigonometric Gaudin Hamiltonians in $\End (V[\nu-\lambda^{(n)}])$  are given as elements of the projection of $\mathcal{B}_{n}^{G}$ to $\End ((V\otimes M^{(n)}_{\lambda^{(n)}})^{sing}[\nu])$, see \cite{IKR}, and Section \ref{trigonometric Gaudin} below. If we consider the similar construction using the universal XXX-Bethe algebra instead of $\mathcal{B}_{n}^{G}$, which is the Yangian Bethe algebra $\mathcal{B}_{n}$ mentioned above, we get the system {\scalebox{1.4}{$\star$}}, whose self-duality is established in Theorem \ref{duality rep} using the map $\Psi_{\lambda}$. The relation of this system to the XXZ-spin chain model is not clear for us yet, but we expect that the former is a degeneration of the latter.

 \subsection{}
 After establishing the self-duality on the level of representations in Theorem \ref{duality rep}, we consider the following limit. Let $\lambda = (\lambda_{1}\lc \lambda_{n+m})$ be the highest weight of the $\glnm$-Verma module $M_{\lambda}$ mentioned above. Then for each $i=n+1\lc n+m$, we replace $\lambda_{i}$ with $r\xi_{i}$ for some new parameters $r$ and $\xi_{i}$ and take the limit $r \rightarrow \infty$. We show that in this limit, the projection of the difference operator $\overline{D}_{m}$ defines the generating difference operator of the XXX-Bethe algebra. We also show that in this limit, the projection of the difference operator $\overline{D}_{n}$ becomes the generating \emph{differential} operator of the trigonometric Gaudin model if prior to taking the limit, we also replace $u$ with $ru$ and $\tau$ with $\exp (r^{-1} \partial_{u})$.

 The most technically involved part of Section \ref{degeneration} deals with the limit of the map $\Psi_{\lambda}$. We first have to modify this map to make the corresponding limit invertible. Then we use the fact that for generic values of the parameters, the trigonometric Gaudin and XXX dynamical Hamiltonians are diagonalizable with one-dimensional common eigenspaces, and compare the limit of  the modified $\Psi_{\lambda}$ with the isomorphism $\Psi$, which we need for the duality of trigonometric Gaudin and XXX models, formulated in Theorem \ref{main3}.

 \subsection{} We conclude the introduction with a brief review of some other known results related to the the $(\gln, \glm )$-duality in quantum integrable systems. Such a duality was studied in the setting of quantum toroidal algebras, see \cite{FJM}. The self-duality of the rational Gaudin model was generalized to the case, when $\glm$ is replaced with the super Lie algebra $\gl_{m|k}$, see \cite{HM1}. We expect that our result can be also generalized to the case of super Lie algebras. The $(\gln, \glm)$-duality of quantum models via the quantum-classical correspondence \cite{MTV9}, \cite{GZZ}, \cite{KorZei} is related to the so-called Ruijsenaars duality of classical integrable models, see \cite{PZ}. Under the quantum-classical correspondence, the Gaudin model is interchanged with the Calogero model, and spin chains are interchanged with Ruijsenaars models. Since the Ruijsenaars duality is expected to involve elliptic models as well, this suggests that the $2\times 2$ square diagram presented in Subsection \ref{1.1} can be extended to a $3\times 3$ square diagram.

As indicated in the work \cite{RSVZ}, the correspondence of the Bethe vectors in some special cases may have a geometric interpretation via the 3d-mirror symmetry. It would be interesting to see how the equalities of the images of the Bethe algebras can be interpreted in these cases. 

\subsection{} The paper is organized as follows. In Section \ref{Sec2}, we recall some facts about Yangians and Bethe algebras. In Section \ref{Sec3} we study the images of the Yangian Bethe algebras in $\Unm$ under the Olshanski homomorphism and prove that the generating difference operators $\overline{D}_{n}$ and $\overline{D}_{m}$ are related via the antihomomorphism \eqref{difference transform}. In Section \ref{Sec4}, we consider the action of the Lie algebra $\glnm$ on the Verma module $M_{\lambda}$ and use the representation theory of Yangians to build the isomorphism $\Psi_{\lambda}: S^{(m)}(a)[b]\rightarrow S^{(n)}(b)[a]$. We then show that from the construction of $\Psi_{\lambda}$ and the results of Section \ref{Sec3}, it follows that $\Psi_{\lambda}$ intertwines the actions of the Bethe algebras of new integrable systems. In Section \ref{degeneration}, we consider the limit of the construction from Section \ref{Sec4}, and establish the duality between the trigonometric Gaudin and XXX-spin chain.

\subsection{Acknowledgements} We thank Evgeny Mukhin and Vitaly Tarasov for discussions and comments. B.F. is partially supported by ISF 3037 2025 1 1. L.R. is supported by the Courtois Foundation. The work of F. U. is an output of a research project implemented as part of the Basic Research Program at the National Research University Higher School of Economics (HSE University).

\section{Preliminaries}\label{Sec2}
\subsection{Basics on Yangians and their representations}
For any associative algebra $\mathcal{A}$, let us denote the algebra of Laurent series in $u$ with coefficients in $\mathcal{A}$ as $\mathcal{A}((u))$. We will identify any rational function $f\in\C (u)$ and its Laurent series at infinity, that is, we will think of $\mathcal{A}(u)=\mathcal{A}\otimes\C (u)$ as a subalgebra of $\mathcal{A}((u^{-1}))$. Also, let us identify any homomorphism of associative algebras $h:\mathcal{A}\rightarrow \mathcal{B}$ and its natural extensions to $\mathcal{A}((u))$ and $\mathcal{A}\otimes \End(\C^{n})$.

The Yangian $\Yn$ is the associative unital algebra with generators $t_{ij}^{(r)}$, $i,j=1\lc n$, $r\in\Z_{>0}$, and relations defined as follows. Introduce the generating series $t_{ij}(u)\in \Yn ((u^{-1}))$:
\[t_{ij}(u)=\delta_{ij}+\sum_{r=1}^{\infty}t_{ij}^{(r)}u^{-r}.\]
Let $\{e_{i},\, i=1\lc n\}$ be the standard basis of $\C^{n}$, that is, $e_{i}=(0\lc 0,1,0\lc 0)$, where $1$ is on the $i$-th place. For each $i,j=1\lc n$, define $E_{ij}\in\End (\C^{n})$ by $E_{ij}e_{j} = e_{i}$. Introduce the elements $T_{1}(u)$, $T_{2}(v)$ and $R_{12}(u,v)$ of the tensor product $\End (\C^{n})\otimes\End(\C^{n})\otimes \Yn((u^{-1}))((v^{-1}))$:
\[T_{1}(u)=\sum_{i=1}^{n}\sum_{j=1}^{n}E_{ij}\otimes\id\otimes t_{ij}(u),\quad T_{2}(v)=\sum_{i=1}^{n}\sum_{j=1}^{n}\id\otimes E_{ij}\otimes t_{ij}(v),\]
\[R_{12}(u,v)=u-v+\sum_{i=1}^{n}\sum_{j=1}^{n}E_{ij}\otimes E_{ji}\otimes 1.\]
Then the relations in $\Yn$ are given by the following formula:
\begin{equation}\label{RTT}
R_{12}(u-v)T_{1}(u)T_{2}(v)=T_{2}(v)T_{1}(u)R_{12}(u-v).
\end{equation}

Let $\{e_{ij}\,\vert\,i,j=1\lc n\}$ be the standard basis of the Lie algebra $\gln$, that is, the image of $e_{ij}$ under the action of $\gln$ on $\C^{n}$ is the operator $E_{ij}$. Then we have the evaluation homomorphism $ev_{n}:\Yn\rightarrow \Un$ given by  
\[ev_{n}: t_{ij}(u)\mapsto \delta_{ij}+\frac{e_{ij}}{u}.\]
Also, for any $c\in\C$, we have the shift homomorphism $sh_{c}:\Yn\rightarrow\Yn$, $sh_{c}(t_{ij}(u))=t_{ij}(u-c)$.
If $V$ is a $\Un$-module, let $V(c)$ denote the $\Yn$-module obtained by pulling back $V$ via $sh_{c}\circ ev_{n}$. The module $V(c)$ is called an evaluation module with evaluation parameter $c$.

Let $W$ be a $\Yn$-module. We say that a vector $w\in W$ has a $\Yn$-weight $(\lambda_{1}(u),\lambda_{2}(u)\lc \lambda_{n}(u))\in\bigl(\C((u^{-1}))\bigr)^{n}$ if for any $i=1\lc n$, we have $t_{ii}(u)w=\lambda_{i}(u) w$. Then $W$ is called a $\Yn$-module of highest $\Yn$-weight $\overline{\lambda}(u)\in\bigl(\C((u^{-1}))\bigr)^{n}$ if $W=\Yn\, v$ for some $v$ that has weight $\overline{\lambda}(u)$ and such that $t_{ij}(u)v=0$ for any $i<j$. For any $\overline{\lambda}(u)\in\bigl(\C((u^{-1}))\bigr)^{n}$, there exists unique up to isomorphism irreducible $\Yn$-module of highest $\Yn$-weight $\overline{\lambda}(u)$.  

\subsection{Bethe subalgebras of Yangians}\label{Bethe subalgebras in Yangian} 


For any associative unital algebra $\mathcal{A}$, let $D(\mathcal{A})$ denote the algebra of difference operators in the variable $u$ with coefficients in $\mathcal{A}((u^{-1}))$, that is, $D(\mathcal{A})$ is the unital subalgebra of $\End\bigl(\mathcal{A}((u^{-1}))\bigr)$ generated by $\mathcal{A}((u^{-1}))$ and the shift operator $\tau$ given by 
\[(\tau\cdot f) (u) = f(u+1),\quad f(u)\in \mathcal{A}((u^{-1})).\] 
Here and throughout the paper, we identify $\mathcal{A}((u^{-1}))$ with its image in $\End\bigl(\mathcal{A}((u^{-1}))\bigr)$ under the regular representation and use the following notation: for any $D\in D(\mathcal{A})$ and $f\in\mathcal{A}((u^{-1}))$, $D\cdot f\in\mathcal{A}((u^{-1}))$ denotes the result of the action of $D$ on $f$, while $Df\in D(\mathcal{A})$ is just a product of $D$ and $f$ in $D(\mathcal{A})$. We will also identify any homomorphism of associative algebras $h:\mathcal{A}\rightarrow \mathcal{B}$ and its natural extension to $D(\mathcal{A})$.

Fix a sequence of complex numbers $C=(c_{1}\lc c_{n})$, and define
$D_{n}(C)\in D(\Yn)$ as follows:

\begin{equation}\label{cdet} 
D_{n}(C) = \sum_{\sigma\in S_{n}}(-1)^{\sigma}D_{\sigma (1),1}D_{\sigma (2), 2}\dots D_{\sigma (n), n},
\end{equation}
where
\begin{equation}\label{mDC}
D_{ij} = c_{i}\delta_{ij}\tau-t_{ij}(u-j+1).
\end{equation}
Let $B_{ij}(C)\in\Yn$, $i=0\lc n$, $j\in\Z_{\geq 0}$ be the coefficients of $D_{n}(C)$:
\begin{equation}\label{DC}
D_{n}(C)=\sum_{i=0}^{n}(-1)^{i}\left(\sum_{j=0}^{\infty}B_{ij}(C)u^{-j}\right)\tau^{n-i}.
\end{equation}

Define the Bethe algebra $\mathcal{B}_{n}(C)$ to be the unital subalgebra of $\Yn$ generated by $B_{ij}(C)\in\Yn$, $i=0\lc n$, $j\in\Z_{\geq 0}$. It is known, see \cite{KS}, \cite{MTV6}, that $\mathcal{B}_{n}(C)$ is commutative. If $C=(1,1,\dots ,1)$, we will write $\mathcal{B}_{n}$, $B_{ij}$, and $D_{n}$ for $\mathcal{B}_{n}(C)$, $B_{ij}(C)$ and $D_{n}(C)$, respectively. 

Denote $B_{i}(u, C)=\sum_{j=0}^{\infty}B_{ij}(C)u^{-j}$.
For each $J=\{j_{1}<j_{2}<\dots <j_{k}\}\subset\{1\lc n\}$, introduce the quantum minor $t_{J}(u)$ as follows:
\begin{equation}\label{qminor}
t_{J}(u)=\sum_{\sigma\in S_{k}}(-1)^{\sigma}t_{j_{\sigma(1)},j_{1}}(u)t_{j_{\sigma(2)},j_{2}}(u-1)\dots t_{j_{\sigma(k)},j_{k}}(u-k+1).
\end{equation}
It is easy to see that
\begin{equation}\label{B=t}
       B_{k}(u, C)=\sum_{J,\,|J|=k}\left(\prod_{j\notin J}c_{j}\right) t_{J}(u),
\end{equation}
In particular, $B_{n}(u, C)$ does not depend on $C$. It is known, see \cite{KS} \cite{Mol2}, that $B_{n}(u, C)$ is a generating series for the center of $\Yn$. It is called the quantum determinant, and we will denote it as $\qdet_{n}(u)$.

\section{The duality on the level of the universal enveloping algebra $\Unm$}\label{Sec3} 
\subsection{Olshanski homomorphism}\label{Oh}
The element $T(u)\in\End(\C^{n})\otimes\Yn((u^{-1}))$ defined by
\[T(u)=\sum_{i=1}^{n}\sum_{j=1}^{n}E_{ij}\otimes t_{ij}(u)\]
is invertible. For each $i.j=1\lc n$, define $t'_{ij}(u)$ by $T^{-1}(u)=\sum_{i,j=1}^{n}E_{ij}\otimes t'_{ij}(u)$. Write $t'_{ij}(u)=\sum_{k=0}^{\infty}(t')^{(k)}_{ij}u^{-k}$. It is evident from relations \eqref{RTT} that $\omega_{n}: t^{(k)}_{ij}\mapsto (-1)^{k}(t')^{(k)}_{ij}$ defines an automorphism of $\Yn$.

Introduce the following inclusions:
\begin{align*}
    & i_{n}: \Yn\hookrightarrow \Ynm,\quad  i_{n}(t_{ij}(u))=t_{ij}(u), \\
    & j_{m}: \Ym\hookrightarrow \Ynm, \quad  j_{m}(t_{ij}(u))=t_{n+i,n+j}(u).
\end{align*}

We will be interested in the following compositions (which are also inclusions):
\[\phi_{n}=\omega_{n+m}\circ i_{n}\circ\omega_{n}:\Yn\hookrightarrow \Ynm,\quad \psi_{m}=\omega_{n+m}\circ j_{m}\circ\omega_{m}:\Ym\hookrightarrow \Ynm.\]

The map $\psi_{m}$ is closely related to the centralizer construction of $\Yn$ presented by G. Olshanski in the work \cite{Olsh}, and we call it the Olshanski homomorphism, and $\phi_{n}$ is a similar map, which can be obtained from $\psi_{n}$ using the automorphism $\delta_{n}$: $\Yn\rightarrow\Yn$, $t_{ij}(u)\mapsto t_{n-j+1,n-i+1}(-u)$ as follows: 
\begin{equation}\label{phi delta}
    \phi_{n}\delta_{n}=\delta_{n+m}\psi_{n}.
\end{equation}
The last formula is an immediate consequence of 
\begin{equation}\label{i delta}
    i_{n}\delta_{n}=\delta_{n+m}j_{n}
\end{equation}
 and  $\omega_{n}\delta_{n}=\delta_{n}\omega_{n}$, which can be checked directly.

For any associative algebra $\mathcal{A}$ and $l\in\Z$, introduce a linear map $\widehat{\delta}_{l}: D(\mathcal{A})\rightarrow D(\mathcal{A})$ as follows:
\[\widehat{\delta}_{l}: \sum_{i}b_{i}(u)\tau^{i} \mapsto \sum_{i} \tau^{i} b_{i}(-u+l-1). \]
The quantum minors introduced in formula \eqref{qminor} can be written in the following form:
\[t_{J}(u)=\sum_{\sigma\in S_{k}}(-1)^{\sigma}t_{j_{1},j_{\sigma(1)}}(u-k+1)t_{j_{2},j_{\sigma(2)}}(u-k+2)\dots t_{j_{k},j_{\sigma(k)}}(u),\]
see \cite[formula (1.55)]{Mol2}.
Using that, one can check that for any $J=\{j_{1}\lc j_{k}\}\subset\{1\lc n\}$, we have 
\begin{equation}\label{delta t}
\delta_{n}(t_{J}(u))=t_{J'}(-u+k-1),
\end{equation}
where $J'=\{n-j_{1},n-j_{2}\lc n-j_{k}\}$. Together with formula  \eqref{B=t}, this gives 
\begin{equation}\label{delta D}
    \delta_{n}(D_{n})=\widehat{\delta}_{n}(D_{n}).
\end{equation}

\begin{lem}\label{lem1}
    Denote $C_{1,0}=(1,1\lc 1,0,0\lc 0)$ with $n$ ones and  $C_{0,1}=(0,0\lc 0,1,1\lc 1)$ with $m$ ones. Then 
    \begin{equation}\label{psi D}
        i_{n}(\qdet_{n}(u+n))\,\psi_{m}(D_{m})=(-1)^{n}\tau^{n}D_{n+m}(C_{0,1})\tau^{-n}.
    \end{equation}
    \begin{equation}\label{phi D}
        j_{m}(\qdet_{m}(-u-1))\,\phi_{n}(\widehat{\delta}_{n}D_{n})=(-1)^{m}\tau^{m}\widehat{\delta}_{n+m}(D_{n+m}(C_{1,0}))\tau^{-m},
    \end{equation}    
\end{lem}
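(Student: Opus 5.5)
The plan is to reduce both identities to a statement about individual quantum minors, using \eqref{B=t}, and to prove that statement with the quantum Jacobi (comatrix) theorem for the Yangian. I will write $t^{(n)}_J(u)$ for a quantum minor in $\Yn$. The version of the Jacobi theorem I intend to use is the one in \cite{Mol2}, Section 1.9. For $I,J\subset\{1\lc N\}$ of equal size $k$, the quantum minor of $T^{-1}(u)$ with rows $I$ and columns $J$ equals
\[
\pm\,\qdet_N(u')^{-1}\;t^{\bar J}_{\bar I}(u''),
\]
where the bars denote complements, the sign is a sign of the relevant permutations, and $u',u''$ are explicit shifts of $u$ depending on $N$ and $k$. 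Here $t^{\bar J}_{\bar I}$ is a quantum minor of $T$ itself. Only principal minors ($I=J$) occur in \eqref{B=t}, so the sign is $+1$.

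The first step is to compute $\psi_m(t^{(m)}_J(u))$ for $J\subset\{1\lc m\}$, $|J|=k$. Applying $\omega_m$ turns $t_J(u)$ into the principal minor of $T^{-1}(-u)$, because of the factor $(-1)^k$ on the $u^{-k}$ coefficients. By the Jacobi theorem in $\Ym$, this equals $\qdet_m(\cdot)^{-1}\,t^{(m)}_{\bar J}(\cdot)$, where $\bar J=\{1\lc m\}\setminus J$. Next I apply $j_m$ and then $\omega_{n+m}$.

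\begin{itemize}
\item The factor $j_m(\qdet_m)$ is the principal minor of $T$ in $\Ynm$ on the rows $\{n+1\lc n+m\}$. Its image under $\omega_{n+m}$ is, by Jacobi in $\Ynm$, $\qdet_{n+m}^{-1}\,i_n(\qdet_n)$, with appropriate shifts.
\item The factor $j_m(t_{\bar J})$ is sent to $\qdet_{n+m}^{-1}\,t_K$, with $K=\{1\lc n\}\cup(n+J)$. This $K$ is the complement in $\{1\lc n+m\}$ of $n+\bar J$.
\end{itemize}

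The two $\qdet_{n+m}$ factors are central and should cancel once the shifts are matched. The expected outcome is
\[
i_n(\qdet_n(u+n))\,\psi_m(t^{(m)}_J(u))=t^{(n+m)}_{\{1\lc n\}\cup(n+J)}(u+n),
\]
up to a uniform shift of $u$ that is absorbed by the conjugation by $\tau^n$.

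The second step is to assemble the difference operators. By \eqref{B=t} with $C=C_{0,1}$, the coefficient $B_i(u,C_{0,1})$ in $\Ynm$ is the sum of $t_K(u)$ over those $K$ of size $i$ with $\{1\lc n\}\subset K$. Otherwise the product $\prod_{j\notin K}c_j$ vanishes. Hence $B_i=0$ for $i<n$, and for $i=n+k$ the surviving $K$ are exactly the sets $\{1\lc n\}\cup(n+J)$ with $|J|=k$. Substituting the identity from the first step gives
\[
D_{n+m}(C_{0,1})=(-1)^n\sum_k(-1)^k\,i_n(\qdet_n)\,\psi_m(B_k)\,\tau^{m-k},
\]
with shifted arguments. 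Conjugating by $\tau^n$ converts $u+n$ back to $u$ in the coefficients, and this yields \eqref{psi D}. The scalar $\qdet_n$ factor sits on the left throughout, so no commutation between $i_n(\Yn)$ and $\psi_m(\Ym)$ is needed.

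For \eqref{phi D} I will not redo the computation. Instead I apply \eqref{psi D} with the roles of $n$ and $m$ exchanged, so that $\psi_n$ is paired with $i_m$. I then apply $\delta_{n+m}$ and use \eqref{phi delta}, \eqref{i delta}, \eqref{delta t} and \eqref{delta D}. The automorphism $\delta_{n+m}$ reverses the index order, so it sends $C_{0,1}$ (with $m$ zeros followed by $n$ ones) to $C_{1,0}$. It acts on difference operators through $\widehat\delta_{n+m}$, which reverses the order of products, sends $\tau^{\pm n}$ to $\tau^{\pm n}$ and reflects $u\mapsto -u+l-1$; this is what turns $\qdet(u+n)$ into $\qdet_m(-u-1)$. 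The remaining work is to check that $\widehat\delta$ is compatible with multiplication in the form needed: $\widehat\delta_l(AB)$ should be expressible through $\widehat\delta_l(A)$ and $\widehat\delta_l(B)$, with the subscripts $n$ versus $n+m$ adjusted by the $\tau^{\pm}$ conjugations.

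The main obstacle I expect is the bookkeeping of shifts in the double application of the Jacobi theorem: once in $\Ym$, once in $\Ynm$, together with the $u\mapsto -u$ coming from the $\omega$'s. I would fix conventions by first checking the case $m=1$, $n=1$ directly from $T^{-1}$ for a $2\times2$ matrix. That check pins down which of $u+n$, $u+n-1$ is correct and the exact shift in the $\widehat\delta$ reflection. After that I would carry the general case with the same conventions.
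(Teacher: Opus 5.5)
Your argument for \eqref{psi D} is the paper's argument: \eqref{B=t} plus the complementary-minor identity \eqref{sylvester}, used once in $\Ym$ and once in $\Ynm$. The shifts close up exactly, so no $2\times 2$ calibration is needed. Indeed, \eqref{sylvester} says $\omega_N(t_K(v))=\qdet_N(-v+N-1)^{-1}t_{K^c}(-v+N-1)$. The first use gives the argument $-u+m-1$, and the second gives $-(-u+m-1)+n+m-1=u+n$ for both factors. Hence $i_n(\qdet_n(u+n))\,\psi_m(t^{(m)}_J(u))=t^{(n+m)}_{\{1,\dots,n\}\cup(n+J)}(u+n)$, and the two copies of $\qdet_{n+m}(u+n)$ cancel.

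The gap is in your mechanism for \eqref{phi D}. The automorphism $\delta_{n+m}$ does not act on difference operators through $\widehat\delta_{n+m}$. Extended to $D(\Ynm)$, it acts on coefficients, fixes $\tau$, and preserves the order of products. By contrast, $\widehat\delta_l$ is only a linear map, and with noncommuting coefficients it is neither multiplicative nor antimultiplicative. So the ``compatibility of $\widehat\delta_l$ with multiplication'' that you list as remaining work does not hold. If you actually reversed products, $\tau^m(\cdot)\tau^{-m}$ would become $\tau^{-m}(\cdot)\tau^{m}$, and the powers of $\tau$ would pass through the $\qdet$ factor, giving the wrong identity. Likewise, the reflection $u\mapsto -u+n+m-1$ does not send $\qdet_m(u+m)$ to $\qdet_m(-u-1)$.

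The correct short route is to apply $\delta_{n+m}$ coefficientwise to \eqref{psi D} with $n$ and $m$ interchanged.
On the right, $\tau^{\pm m}$ stay put. By \eqref{B=t} and \eqref{delta t}, $\delta_N(D_N(C))=\widehat\delta_N(D_N(C^{\mathrm{rev}}))$ with $C^{\mathrm{rev}}_j=c_{N+1-j}$, and this turns $m$ zeros followed by $n$ ones into $C_{1,0}$.
On the left, $\delta_{n+m}\psi_n=\phi_n\delta_n$ together with \eqref{delta D} gives $\phi_n(\widehat\delta_nD_n)$. Also $\delta_{n+m}i_m=j_m\delta_m$ together with \eqref{delta t} for $k=m$ gives $\delta_m(\qdet_m(u+m))=\qdet_m(-u-1)$.
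Thus $\widehat\delta$ enters only through these identities for the specific operators $D_n$ and $D_{n+m}(C)$, never as a map applied to a product.
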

\begin{proof}
 The proof of \eqref{psi D} is a straightforward check using \eqref{B=t} and a well-known (see \cite[Theorem 1.10.7]{Mol2}) relation
 \begin{equation}\label{sylvester}
 \qdet_{n}(u)\cdot\omega_{n}(t_{J}(-u+n-1))=t_{J^{c}}(u),
 \end{equation}
 where $J^{c}=\{1\lc n\}\setminus J$.

 Relation \eqref{phi D} can be obtained by applying $\delta_{n+m}$ to the both sides of \eqref{psi D} and using formulas \eqref{phi delta}, \eqref{i delta}, \eqref{delta t}, and \eqref{delta D}.
 \end{proof}

 \subsection{The duality} Denote $d=\tau-1$. For any $c\in\C$, let $\mathcal{P}(c)$ be the unital subalgebra of $D(\C)$ generated by $u$ and $(u-c)d$. It is easy to check that any element of $\mathcal{P}(c)$ can be written in the form $\sum_{i=1}^{k}\sum_{j=1}^{l}a_{ij}u^{i}((u-c)d)^{j}$ for some $k,l\in\Z_{\geq 0}$, $a_{ij}\in\C$. It can be checked that there is an anti-isomorphism of algebras $\mathcal{F}_{n,m}: \mathcal{P}(n-1) \rightarrow \mathcal{P}(m-1)$ defined by
 \begin{align*}
     \mathcal{F}_{n,m}: \quad & (u-n+1)d  \mapsto (u+n), \\
      & u+m \mapsto (u-m+1)d.
 \end{align*}

 Let $ev_{n}: \Yn\rightarrow\Un$ be the evaluation homomorphism:
 \begin{equation}\label{ev}
 ev_{n}: t_{ij}(u)\mapsto \delta_{ij}+\frac{e_{ij}}{u}.
 \end{equation}
 For any $j\in\Z_{\geq 0}$, denote $u^{\overline{j}}=u(u+1)\dots (u+j-1)$ and $u^{\underline{j}}=u(u-1)\dots (u-j+1)$.
 Define the following difference operators:
 \begin{equation}\label{Dbar01}
 \overline{D}_{m}=(u+n)^{\underline{n+m}}\,ev_{n+m}\left[i_{n}(\qdet_{n}(u+n))\,\psi_{m}(D_{m})\right], 
 \end{equation}
 \begin{equation}\label{Dbar10}
     \overline{D}_{n}=(u+m)^{\underline{n+m}}\,ev_{n+m}\left[j_{m}(\qdet_{m}(-u-1))\,\phi_{n}(\widehat{\delta}_{n}D_{n})\right]. 
 \end{equation}
 Let us identify $\mathcal{F}_{n,m}$ with its extension $\id\otimes\mathcal{F}_{n,m}$ to $\Unm\otimes\mathcal{P}(n-1)$. 
The following theorem is the main result of this section:
 \begin{thm}\label{main1}
     The difference operators $\overline{D}_{n}$ and $\overline{D}_{m}$ belong to $\Unm\otimes \mathcal{P}(n-1)$ and $\Unm\otimes \mathcal{P}(m-1)$, respectively, and we have 
     \begin{equation}\label{mainrelation1}
     \mathcal{F}_{n,m}\bigl(\overline{D}_{n}\bigr)=\overline{D}_{m}.
     \end{equation}
 \end{thm}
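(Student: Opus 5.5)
By Lemma \ref{lem1}, both operators are expressed through column-determinants of $(n+m)\times(n+m)$ matrices of difference operators, with entries evaluated by $ev_{n+m}$. My plan is to compute these determinants explicitly in terms of $E=(e_{ij})$ and then match them under $\mathcal{F}_{n,m}$.

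\textbf{Step 1: the $\overline{D}_m$ side.} By \eqref{psi D} and \eqref{Dbar01},
\[
\overline{D}_{m}=(-1)^n(u+n)^{\underline{n+m}}\,\tau^{n}\,ev_{n+m}\bigl(D_{n+m}(C_{0,1})\bigr)\,\tau^{-n}.
\]
After evaluation, the $(i,j)$ entry of $D_{n+m}(C_{0,1})$ is $c_i\delta_{ij}\tau-\delta_{ij}-e_{ij}/(u-j+1)$. I will clear the denominators column by column: multiplying column $j$ on the right by $(u-j+1)$ should absorb the prefactor $(u+n)^{\underline{n+m}}$ once it is conjugated by $\tau^n$. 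Using $\tau(u-j+1)=(u-j+2)\tau$, the resulting column-determinant has entries that are linear in $E$ and linear in $u$ and $\tau$:
\begin{itemize}
\item for columns $j\le n$: $-(u-j+1)\delta_{ij}-e_{ij}$, with no $\tau$;
\item for columns $j>n$: $(u-j+1)d\,\delta_{ij}-e_{ij}$ (after suitable shifts), which is where the combination $(u-c)d$ enters.
\end{itemize}
Expanding along the first $n$ columns should then show that every coefficient is a polynomial in $u$ and in shifted copies of $(u-m+1)d$. This proves membership in $\Unm\otimes\mathcal{P}(m-1)$, since the commutation rule $d\,u=(u+1)d+\tau-1\cdot\ldots$ keeps us inside $\mathcal{P}$.

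\textbf{Step 2: the $\overline{D}_n$ side.} I treat $\overline{D}_n$ symmetrically using \eqref{phi D}. Applying $\widehat\delta_{n+m}$ reverses the order of products and sends $u\mapsto -u+n+m-1$. The result should be a determinant of the same block shape, but now the $\tau$-carrying entries sit in the first $n$ columns and appear in the form $(u-n+1)d$, while the last $m$ columns are plain linear in $u$.

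\textbf{Step 3: matching under $\mathcal{F}_{n,m}$.} Since $\mathcal{F}_{n,m}$ is an anti-isomorphism, applying it to the column-determinant from Step 2 reverses the factor order in each term. This turns the column-determinant into a row-type determinant in which $(u-n+1)d$ is replaced by $u+n$ and $u+m$ by $(u-m+1)d$. I then need to identify this with the Step 1 determinant. In $\Unm$, the column-determinant of a Manin-type matrix $A+E$ equals its row-determinant and is invariant under the matching transposition-like symmetry (Capelli-type identities). So I would verify that the matrix in Step 1 is Manin, using the $\gl_{n+m}$ commutation relations together with the shift relations, and invoke column/row equality.

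\textbf{Main obstacle.} The hardest step will be keeping track of the precise shifts in $u$ along the columns, so that after $\mathcal{F}_{n,m}$ the diagonal entries land exactly as $(u-j+1)$ versus $(u-j+1)d$ with matching constants. As a fallback, I would check the identity on a faithful family of representations: Verma modules, or equivalently the symbols, reducing the question to commutative determinants. There one can test the equality generically and then lift it by a filtration argument, since both sides have the same degree in $E$.
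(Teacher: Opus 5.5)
There is a genuine gap. Your starting point (Lemma \ref{lem1}, then clearing denominators) is right, but the step that actually proves the theorem is missing, and the matching mechanism in Step 3 is wrong.

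\textbf{The missing identity.} The entries you predict after clearing denominators are not correct, and finding the correct ones is the real content of the theorem. The $e_{ij}$ commute with $u$ and $\tau$, so the evaluated column determinant expands as $\sum_{J}S_J\cdot\sum_{\sigma}(-1)^{\sigma}e_{\sigma,\bar J}$, as in \eqref{t1.3} and \eqref{t1.4}. Here $J$ is the set of columns in which the diagonal term is chosen, and $S_J$ is a \emph{scalar} difference operator in which the denominators $(u-j+1)^{-1}$ are interleaved with the $d$'s.

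Multiplying by the prefactor does not simply turn column $j$ into $(u-j+1)d\,\delta_{ij}-e_{ij}$, because moving $u$-factors past $d$ produces extra terms. Already for $n=1$, $m=2$, $J_2=\{2\}$ one gets $(u-1)(u-2)\,d\,\frac{1}{u-2}=(u-2)d-1$, not $(u-1)d$. What is true is \eqref{t1.5}:
$(u-n)^{\underline m}D^{(m)}_{J_2}=\prod_{j\in J_2}\bigl((u-n-m+1)d-(n+m-j)\bigr)$,
together with its analogue \eqref{t1.6}. So all $d$-columns involve the \emph{same} operator $(u-n-m+1)d$, with integer shifts.

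This identity is what makes the scalar factors commute with one another. It is also what gives membership in $\mathcal P(m-1)$: since $d\notin\mathcal P(c)$, the remark that commutation rules ``keep us inside $\mathcal P$'' proves nothing. The paper proves it by showing both sides annihilate $(u-n-m+1)^{\overline{n+m-j}}$, $j\in J_2$, and then using a Wronskian/Vandermonde argument. Your column-by-column idea can also be made to work through $(u-c)^{\underline{k+1}}d=\bigl((u-c-k)d-k\bigr)(u-c-1)^{\underline{k}}$. However, you only flag this as the ``main obstacle'', and the form you anticipate is wrong.

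\textbf{The matching step.} $\mathcal F_{n,m}$ is extended as $\id\otimes\mathcal F_{n,m}$ on $\Unm\otimes\mathcal P(n-1)$. It therefore reverses only the scalar difference-operator factors and leaves the order of the $e_{ij}$ untouched; the same holds for $\widehat\delta_{n+m}$. It does not convert a column determinant into a row determinant.

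No row--column identity is needed, and the one you invoke would be false here. Column and row determinants of a Manin matrix need not agree, and already $e_{11}e_{22}-e_{21}e_{12}$ and $e_{11}e_{22}-e_{12}e_{21}$ differ by $e_{11}-e_{22}$.

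The correct matching is term by term in $J$. The $\Unm$-parts $\sum_\sigma(-1)^\sigma e_{\sigma,\bar J}$ are literally identical on both sides, with the same sign $(-1)^{m-|J_2|}$. One only has to check that $\mathcal F_{n,m}$ sends
$\prod_{j\in J_2}(u-n+j)\prod_{j\in J_1}\bigl((u-n+1)d-j+1\bigr)$
to
$\prod_{j\in J_1}(u+n-j+1)\prod_{j\in J_2}\bigl((u-m+1)d-(n+m-j)\bigr)$,
which is immediate once \eqref{t1.5}--\eqref{t1.6} are known.

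Your fallback also fails. Passing to symbols is not equivalent to testing on Verma modules. Moreover, equality of top-degree symbols cannot be lifted by a filtration argument, because the whole content of \eqref{mainrelation1} lies in the lower-order shifts.
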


\begin{proof}
Using formulas  \eqref{cdet}, \eqref{mDC}, \eqref{DC}, \eqref{ev}, we can write explicitly:
\begin{equation}\label{t1.1}
ev_{n+m}(D_{n+m}(C_{1,0}))=\sum_{\sigma\in S_{n+m}}(-1)^{\sigma}\,\overset{\longrightarrow}{\prod_{i=1}^{n}}\left(d\,\delta_{\sigma (i),i}-\frac{e_{\sigma (i),i}}{u-i+1}\right)\overset{\longrightarrow}{\prod_{i=n+1}^{n+m}}\left(-\delta_{\sigma (i),i}-\frac{e_{\sigma (i),i}}{u-i+1}\right),
\end{equation}
\begin{equation}\label{t1.2}
ev_{n+m}(D_{n+m}(C_{0,1}))=\sum_{\sigma\in S_{n+m}}(-1)^{\sigma}\,\overset{\longrightarrow}{\prod_{i=1}^{n}}\left(-\delta_{\sigma (i),i}-\frac{e_{\sigma (i),i}}{u-i+1}\right)\overset{\longrightarrow}{\prod_{i=n+1}^{n+m}}\left(d\,\delta_{\sigma (i),i}-\frac{e_{\sigma (i),i}}{u-i+1}\right),
\end{equation}
where arrows over products mean that we choose the following ordering: the index $i$ increases as we go from the left to the right.

For each subset $J=\{j_{1}<j_{2}<\dots <j_{k}\}\subset\{1,\dots, n+m\}$, introduce the following notation: $J_{1}=J\cap\{1,\dots ,n\}$,
$J_{2}=J\cap\{n+1,\dots ,n+m\}$, $\bar{J}=\{1,\dots, n+m\}\setminus J$, as well as
\[D^{(n)}_{J_{1}}=\frac{1}{u^{\underline{j_{1}-1}}}d\frac{1}{(u-j_{1})^{\underline{j_{2}-j_{1}-1}}}d\frac{1}{(u-j_{2})^{\underline{j_{3}-j_{2}-1}}}d\dots d\frac{1}{(u-j_{r})^{\underline{n-j_{r}}}},\]
where $r=|J_{1}|$, and 
\[D^{(m)}_{J_{2}}=\frac{1}{(u-n)^{\underline{j_{r+1}-n-1}}}d\frac{1}{(u-j_{r+1})^{\underline{j_{r+2}-j_{r+1}-1}}}d\frac{1}{(u-j_{r+2})^{\underline{j_{r+3}-j_{r+2}-1}}}d\dots d\frac{1}{(u-j_{k})^{\underline{n+m-j_{k}}}}.\]
Also, for each $J=\{j_{1}<j_{2}<\dots <j_{k}\}\subset\{1,\dots, n+m\}$ and $\sigma\in S_{n+m-|J|}$, define
\[e_{\sigma, J}=e_{j_{\sigma (1)},j_{1}}e_{j_{\sigma (2)},j_{2}}\dots e_{j_{\sigma (k)},j_{k}}.\]

From \eqref{t1.1} and \eqref{t1.2}, we get:
\begin{equation}\label{t1.3}
ev_{n+m}(D_{n+m}(C_{1,0}))=\sum_{J\subset\{1,\dots ,n+m\}}(-1)^{n+m-|J_{1}|}\sum_{\sigma\in S_{n+m-|J|}}(-1)^{\sigma}D^{(n)}_{J_{1}}\prod_{j\notin J_{2}}(u-j+1)^{-1}e_{\sigma, \bar{J}},
\end{equation}
\begin{equation}\label{t1.4}
ev_{n+m}(D_{n+m}(C_{0,1}))=\sum_{J\subset\{1,\dots ,n+m\}}(-1)^{n+m-|J_{2}|}\sum_{\sigma\in S_{n+m-|J|}}(-1)^{\sigma}\prod_{j\notin J_{1}}(u-j+1)^{-1}D^{(m)}_{J_{2}}e_{\sigma, \bar{J}}.
\end{equation}

Since for each $c\in\C$ and $j\in\Z_{\geq 0}$, we have
\[d\frac{1}{(u-c)^{\underline{j}}}=\frac{1}{(u+1-c)^{\underline{j}}}d-\frac{j}{(u+1-c)^{\underline{j+1}}},\]
the difference operator $D^{(m)}_{J_{2}}$ is of the form $\sum_{l=0}^{k-r}a_{l}((u-n)^{\underline{m-l}})^{-1}d^{l}$, where $a_{l}$ are some complex numbers. We have $(u-n)^{\underline{m}}((u-n)^{\underline{m-l}})^{-1}=(u-n-m+1)^{\overline{l}}$, and it is easy to show by induction on $l$ that $(u-n-m+1)^{\overline{l}}d^{l}=\sum_{i=0}^{l}\tilde{a}_{i}((u-n-m+1)d)^{i}$ for some complex numbers $\tilde{a}_{i}$. Therefore, $u^{\underline{m}}\tau^{n} D^{(m)}_{J_{2}} \tau^{-n}\in \mathcal{P}(m-1)$.
This, together with formulas \eqref{psi D} and \eqref{t1.4}, shows that $\overline{D}_{m}\in\Unm\otimes \mathcal{P}(m-1)$. Similarly, one can prove that $\overline{D_{n}}\in\Unm\otimes \mathcal{P}(n-1)$.

Comparing formulas \eqref{t1.3} and \eqref{t1.4}, we see that it is enough to prove that for any $J\subset\{1\lc n+m\}$, we have
\begin{equation}\label{t1.5}
    (u-n)^{\underline{m}}D^{(m)}_{J_{2}}=\prod_{j\in J_{2}}((u-n-m+1)d-(n+m-j))
\end{equation}
and 
\begin{equation}\label{t1.6}
    (u-m)^{\underline{n}}\,\widehat{\delta}_{n+m}D^{(n)}_{J_{1}}=(-1)^{n-|J_{1}|}\prod_{j\in J_{1}}((u-n-m+1)d-j+1).
\end{equation}

We will prove relation \eqref{t1.5}. Relation \eqref{t1.6} can be proved similarly. 

It is easy to check that both $D^{(m)}_{J_{2}}$ and $\prod_{j\in J_{2}}((u-n-m+1)d-(n+m-j))$ annihilate functions
\[f_{j}=(u-n-m+1)^{\overline{n+m-j}},\quad j\in J_{2}.\]

Denote $\tilde{D}=D^{(m)}_{J_{2}}-\prod_{j\in J_{2}}((u-n-m+1)d-(n+m-j))$. Then $\tilde{D}$ is a difference operator of the form $\sum_{i=0}^{s}a_{i}(u)((u-n-m-1)d)^{i}$. for some $s<|J_{2}|=\deg D^{(m)}_{J_{2}}$.

Suppose that $\tilde{D}\neq 0$. Then $\tilde{D}\cdot f_{j}=0$, $j\in J_{2}$ and $s<|J_{2}|$ imply that the difference Wronskian
$\det \bigl( (((u-n-m+1)d)^{p-1}\cdot f_{j_{s}})_{p,s=1\lc |J_{2}|}\bigr)$ is identically zero.

On the other hand, straightforward computation gives
\[\det \bigl( (((u-n-m+1)d)^{p-1}\cdot f_{j_{s}})_{p,s=1\lc |J_{2}|}\bigr)=\prod_{j\in J_{2}}(u-n-m+1)^{\overline{n+m-j}}\,\det \bigl( ((n+m-j_{s})^{p-1})_{p,s=1\lc |J_{2}|}\bigr).\]

Since $j_{s}\neq j_{p}$ for $s\neq p$, the Vandermonde determinant in the right hand side of the last formula is not zero. Therefore, $\tilde{D}=0$, and relation \eqref{t1.5} is proved.
\end{proof}
   
Let $\mathcal{Z}_{n+m}$ be the center of $\Unm$. For any subalgebra $\mathcal{A}$ of $\Unm$, we denote the subalgebra, generated by $\mathcal{A}$ and $\mathcal{Z}_{n+m}$ by $\mathcal{A}\cdot\mathcal{Z}_{n+m}$. We can now use Theorem \ref{main1} to establish the duality on the level of the universal enveloping algebra $\Unm$:
\begin{cor}\label{duality}
    The following equality holds:
    \begin{equation}\label{Equality in Unm}
    ev_{n+m}(\phi_{n}(\mathcal{B}_{n}))\cdot \mathcal{Z}_{n+m} =\,ev_{n+m}(\psi_{m}(\mathcal{B}_{m}))\cdot \mathcal{Z}_{n+m}.
    \end{equation}
\end{cor}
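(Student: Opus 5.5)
Both sides of \eqref{Equality in Unm} are generated, modulo $\mathcal{Z}_{n+m}$, by the coefficients of the operators $\overline{D}_{n}$ and $\overline{D}_{m}$. Theorem \ref{main1} relates these two operators by the anti-isomorphism $\mathcal{F}_{n,m}$, and $\mathcal{F}_{n,m}$ acts only on the $\mathcal{P}$-factor. So it should merely mix coefficients by invertible scalar triangular transformations, and hence the spans of the coefficients coincide.

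\textbf{Step 1: the coefficients of $\overline{D}_{m}$ generate $ev_{n+m}(\psi_{m}(\mathcal{B}_{m}))\cdot\mathcal{Z}_{n+m}$.}
\begin{itemize}
\item The factor $ev_{n+m}(i_{n}(\qdet_{n}(u+n)))$ has coefficients lying in $ev_{n+m}(i_n(Y(\gl_n)))$. They commute with $ev_{n+m}\psi_{m}(\Ym)$, since $\psi_m(\Ym)$ centralizes $i_n(\Yn)$ in $\Ynm$ (Olshanski's centralizer property).
\item Via \eqref{sylvester} applied to $J=\{n+1\lc n+m\}$, the element $i_n(\qdet_n(u+n))$ equals $\qdet_{n+m}$ times $\psi_m(\qdet_m)^{-1}$ up to a shift in $u$. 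After $ev_{n+m}$, $\qdet_{n+m}$ becomes $(u^{\underline{n+m}})^{-1}$ times a polynomial in $u$ with coefficients in $\mathcal{Z}_{n+m}$ (the Capelli determinant).
\item Hence $\overline{D}_{m}$ is a product of the following pieces: a central polynomial factor $z(u)$, which is invertible as a series in $u^{-1}$ with leading coefficient $1$; a factor $ev_{n+m}\psi_{m}(\qdet_m)^{-1}$; the operator $ev_{n+m}\psi_{m}(D_{m})$; and scalar rational functions.
\item Since $\qdet_m=B_m(u)\in\mathcal{B}_m$, the coefficients of $\overline{D}_{m}$ lie in $ev_{n+m}\psi_m(\mathcal{B}_m)\cdot\mathcal{Z}_{n+m}$.
\item Conversely, I recover the $B_{ij}$ and the central coefficients of $z(u)$ from the coefficients of $\overline{D}_m$. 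The top coefficient $B_0=1$ produces $z(u)\,ev(\psi_m\qdet_m)^{-1}$ up to scalar rational factors. The $\tau^0$ coefficient involves $\psi_m(\qdet_m)$, which yields $z(u)$. Dividing then gives all $ev\psi_m(B_i(u))$.
\end{itemize}
This uses that the $\mathcal{Z}_{n+m}$-coefficients of $z$ are exactly the Capelli central generators. The same argument applies to $\overline{D}_{n}$, using \eqref{phi D} and $\delta$.

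\textbf{Step 2: compare coefficients through $\mathcal{F}_{n,m}$.}
\begin{itemize}
\item Write $\overline{D}_{n}=\sum_{i,j}a_{ij}\,u^{i}((u-n+1)d)^{j}$ with $a_{ij}\in\Unm$; Theorem \ref{main1} guarantees this form.
\item Applying $\mathcal{F}_{n,m}$ gives $\overline{D}_{m}=\sum a_{ij}\,(u+n)^{j}\,((u-m+1)d-m)^{i}$, since $\mathcal{F}_{n,m}$ is an anti-isomorphism, acts trivially on $\Unm$, and reverses order.
\item Normal-ordering this expression in the basis $u^{p}((u-m+1)d)^{q}$ of $\mathcal{P}(m-1)$ yields coefficients that are $\C$-linear combinations of the $a_{ij}$, and vice versa because $\mathcal{F}_{n,m}$ is invertible.
\item The $\{u^p((u-c)d)^q\}$-coefficients of an operator are themselves $\C$-linear combinations of its $u^{-j}\tau^i$-coefficients, and conversely. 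To see this I expand $(u-c)^q d^q$ and use the earlier commutation formula for $d$ past rational functions; the transition is triangular, and I check that it is invertible.
\item Therefore the linear spans of the coefficients of $\overline{D}_n$ and of $\overline{D}_m$ coincide.
\end{itemize}

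\textbf{Step 3: conclude.} Combining Steps 1 and 2 gives \eqref{Equality in Unm}.

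\textbf{Main obstacle.} I expect the main difficulty to be Step 1, specifically the converse inclusion. There I must show that the central factor and the inverse quantum determinant can be stripped off to recover the full Bethe algebra together with $\mathcal{Z}_{n+m}$. I would first try to handle it by comparing the top and bottom $\tau$-degree coefficients, as described in Step 1.
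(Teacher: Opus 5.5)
Your proposal is correct and takes essentially the same route as the paper. The Sylvester identity \eqref{sylvester} shows that the coefficients of $\overline{D}_{m}$ lie in $ev_{n+m}(\psi_{m}(\mathcal{B}_{m}))\cdot\mathcal{Z}_{n+m}$, inverting the top coefficient shows they generate it (and likewise for $\overline{D}_{n}$ and $\phi_{n}$), and Theorem \ref{main1} then matches the two sets of coefficients. The differences are cosmetic: the paper expands in the bases $(u+m)^{i}((u-n+1)d)^{j}$ and $(u+n)^{j}((u-m+1)d)^{i}$, so $\mathcal{F}_{n,m}$ identifies the coefficients exactly, while you use a scalar triangular change of basis; and your use of the $\tau^{0}$ coefficient to recover the central factor spells out a step the paper leaves implicit.
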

\begin{proof}
    Since by Theorem \ref{main1}, the difference operators $\overline{D}_{n}$ and $\overline{D}_{m}$ belong to $\Unm\otimes \mathcal{P}(n-1)$ and $\Unm\otimes \mathcal{P}(m-1)$, respectively, we can write
    \[\overline{D}_{n}=\sum_{i=0}^{M}\sum_{j=0}^{n}A^{(n)}_{ij}(u+m)^{i}((u-n+1)d)^{j},\quad \overline{D}_{m}=\sum_{i=0}^{m}\sum_{j=0}^{N} A^{(m)}_{ji}(u+n)^{j}((u-m+1)d)^{i}\]
    for some $M,N\in \Z_{\geq 0}$, $A^{(n)}_{ij}, A^{(m)}_{ji}\in \Unm$.

    Using \eqref{sylvester}, one can check that $i_{n}(\qdet_{n}(u+n))=qdet_{n+m}(u+n)\psi_{m}(\qdet^{-1}_{m}(u))$. Since the coefficients of $ev_{n+m}(qdet_{n+m}(u+n))$ belong to $\mathcal{Z}_{n+m}$ and the coefficients of $\qdet^{-1}_{m}(u)$ belong to $\mathcal{B}_{m}$, we deduce that the coefficients of $ev_{n+m}(i_{n}(\qdet_{n}(u+n)))$ belong to $ev_{n+m}(\psi_{m}(\mathcal{B}_{m}))\cdot \mathcal{Z}_{n+m}$. Therefore, it follows from the definition of $\overline{D}_{m}$ (see \eqref{Dbar01}) that $A^{(m)}_{ij}$ are elements of $ev_{n+m}(\psi_{m}(\mathcal{B}_{m}))\cdot \mathcal{Z}_{n+m}$. On the other hand. comparing coefficients for $d^{m}$ on the both sides of equation \eqref{Dbar01}, we see that 
    \[(u-m+1)^{m}\sum_{j=0}^{N} A^{(m)}_{jm}(u+n)^{j}=(u+n)^{\underline{n+m}}\,ev_{n+m}\left[i_{n}(\qdet_{n}(u+n))\right].\]
    Hence, coefficients of $ev_{n+m}\left[i_{n}(\qdet_{n}(u+n))\right]^{-1}$ can be expressed polynomially through $A^{(m)}_{ij}$. Therefore, using formula \eqref{Dbar01}, we can express all coefficients through $A^{(m)}_{ij}$, thus, the elements $\tilde{A}^{(m)}_{ji}$, $i=0\lc m$, $j=0\lc N$ generate the algebra $ev_{n+m}(\psi_{m}(\mathcal{B}_{m}))\cdot \mathcal{Z}_{n+m}$. Similarly, we prove that the elements $A^{(n)}_{ij}$, $i=0\lc M$, $j=0\lc n$ generate the algebra $ev_{n+m}(\phi_{n}(\mathcal{B}_{n}))\cdot\mathcal{Z}_{n+m}$. 
    But by relation \eqref{mainrelation1}, we can take $N=n$, $M=m$, and for each $i=0\lc n$, $j=0\lc m$, we have $A^{(n)}_{ij}= A^{(m)}_{ji}$.
\end{proof}

\begin{rem}
    It's not hard to prove Corollary \ref{duality} without Theorem \ref{main1} using the argument that we briefly outline now. We thank Vitaly Tarasov for pointing this out.
    
    For any $C=(c_{1}\lc c_{n+m})\in\C^{n+m}$, let $\widetilde{\mathcal{A}}_{n+m}(C)$ be the subalgebra of $\Unm$ generated by the coefficients of the difference operator $ev_{n+m}(D_{n+m}(C))$, that is $\widetilde{\mathcal{A}}_{n+m}(C)=ev_{n+m}(\mathcal{B}_{n+m}(C))$. It follows from relations \eqref{psi D}, \eqref{phi D}, \eqref{Dbar01}, \eqref{Dbar10}, and the proof of Corollary \ref{duality} that equality \eqref{Equality in Unm} is the same as
    \begin{equation}\label{duality tilde}
        \widetilde{\mathcal{A}}_{n+m}(C_{1,0})=\widetilde{\mathcal{A}}_{n+m}(C_{0,1}).
    \end{equation}
    It's not hard to see that for any non-zero complex number $a$, we have $\mathcal{B}_{n+m}(aC)=\mathcal{B}_{n+m}(C)$ (this just corresponds to changing $\tau$ to $a\tau$ in $D_{n+m}(C)$). Therefore, for any $a\in\C$, $a\neq 0$, we have $\widetilde{\mathcal{A}}_{n+m}(aC)=\widetilde{\mathcal{A}}_{n+m}(C)$.
    
    On the other hand, $\widetilde{\mathcal{A}}_{n+m}(C)$ can be obtained as the image under the evaluation map of the universal \textit{Gaudin} Bethe algebra $\mathfrak{B}_{n+m}(C)\in U(\glnm [t])$. For the construction of $\mathfrak{B}_{n+m}(C)$ with $C=(1\lc 1)$, see Section \ref{trigonometric Gaudin} below. To get $\mathfrak{B}_{n+m}(C)$ for any $C$, we just need to modify $\mathfrak{D}_{ij}$ as follows:  $\mathfrak{D}_{ij} = \delta_{ij}(\partial_{u}+c_{i})-e_{ij}(u)$. The equality $\widetilde{\mathcal{A}}_{n+m}(C)=\tilde{ev}_{n+m}(\mathfrak{B}_{n+m}(C))$ follows from the fact that the commutation relation for operators $\partial_{u}$, $u$ is the same as for the operators $\tau^{-1}$, $\tau u$. But it follows from the construction of $\mathfrak{B}_{n+m}(C)$ that it doesn't change if we add a vector of the form $(b\lc b)$ to $C$ (this just corresponds to changing $\partial_{u}$ to $\partial_{u}+b$ in differential operator $\mathfrak{D}_{n+m}(C)$). Therefore, $\widetilde{\mathcal{A}}_{n+m}(C+(b\lc b))=\widetilde{\mathcal{A}}_{n+m}(C)$.   

    Now, since $C_{1,0}=-C_{0,1}+(1\lc 1)$, we obtain relation \eqref{duality tilde}. 

    The invariance of $\widetilde{\mathcal{A}}_{n+m}(C)$ under transformation $C\mapsto aC+(b\lc b)$ can also be explained naturally in the context of quantum shift of argument algebra, see the next section for details. 
\end{rem}

\subsection{Shift of argument subalgebras} Let $\mathfrak{g}$ be any complex reductive Lie algebra. The following \emph{argument shift method} gives a way to construct
subalgebras in $S(\mathfrak{g})$ commutative with respect to the
Poisson--Lie bracket. Let
$ZS(\mathfrak{g})=S(\mathfrak{g})^{\mathfrak{g}}$ be the center of $S(\mathfrak{g})$ with respect to
the Poisson bracket, and let $\chi\in\mathfrak{g}^*$. Then the algebra $A(\chi)\subset S(\mathfrak{g})$ generated by the elements of $ZS(\mathfrak{g})\subset S(\mathfrak{g})=\mathbb{C}[\mathfrak{g}^*]$
shifted by $t\chi$ for all $t\in\mathbb{C}$ is commutative with respect
to the Poisson bracket, see \cite{FM}. The subalgebra $A(\chi)\subset S(\mathfrak{g})$ is called the \emph{shift of argument subalgebra} or \emph{Mishchenko-Fomenko subalgebra}. Identifying $\mathfrak{g}$ with $\mathfrak{g}^*$ with the help of an invariant non-degenerate inner product on $\mathfrak{g}$, we can regard $\chi$ as an element of $\mathfrak{g}$. Let $\mathfrak{z}(\chi)$ be the centralizer of $\chi$ in $\mathfrak{g}$. For semisimple  $\chi\in\mathfrak{g}$, the subalgebra $A(\chi)$ is known to be a maximal Poisson-commutative subalgebra of the maximal possible transcendence degree in the algebra $S(\mathfrak{g})^{\mathfrak{z}(\chi)}$ of $\mathfrak{z}(\chi)$-invariants in $S(\mathfrak{g})$. There is a natural lifting of $A(\chi)$ to a commutative subalgebra $\mathcal{A}(\chi)\subset U(\mathfrak{g})$, called the \emph{quantum} shift of argument subalgebra, see \cite{Ryb2} and \cite{FFTL} for details.

Let $\mathfrak{g}=\mathfrak{gl}_{n+m}$ and let $\chi\in\mathfrak{h}$ be a Cartan element of $\mathfrak{gl}_{n+m}$, i.e. a diagonal matrix. Then the subalgebra $\mathcal{A}(\chi)\subset U(\mathfrak{gl}_{n+m})$ is the only commutative subalgebra such that $A(\chi)=\gr\mathcal{A}(\chi)$ and $\mathcal{A}(\chi)\subset U(\mathfrak{gl}_{n+m})^{\mathfrak{z}(\chi)}$ (this follows from \cite[Corollary 10.12]{HKRW} and \cite{Tar2}). Since $A(\chi)\subset S(\mathfrak{gl}_{n+m})$ does not change under dilations of $\chi$ and under adding a scalar matrix to $\chi$, we have the following: 

\begin{prop}\label{pr:A-chi-aff-invariance}
    Denote $I=\sum_{i=1}^{n+m}e_{ii}$. Then for any $a,b\in\C$, $a\neq 0$, we have 
    \begin{equation}\label{A-chi-aff-invariance}
    \mathcal{A}(a\chi+b I)=\mathcal{A}(\chi).
    \end{equation}
\end{prop}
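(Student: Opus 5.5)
The plan is to use the uniqueness characterization recalled just before the statement. For a Cartan element $\chi$, $\mathcal{A}(\chi)$ is the unique commutative subalgebra of $U(\glnm)^{\mathfrak{z}(\chi)}$ whose associated graded is $A(\chi)$ (by \cite[Corollary 10.12]{HKRW} and \cite{Tar2}). So it suffices to show that $\mathcal{A}(a\chi+bI)$ has these two properties with respect to $\chi$. Then uniqueness forces $\mathcal{A}(a\chi+bI)=\mathcal{A}(\chi)$.

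First, I check the centralizers. Since $I$ is central and $a\neq 0$, we have $\mathfrak{z}(a\chi+bI)=\mathfrak{z}(\chi)$. Also, $a\chi+bI$ is again diagonal, so the uniqueness statement applies to it as well. In particular, $\mathcal{A}(a\chi+bI)$ is a commutative subalgebra of $U(\glnm)^{\mathfrak{z}(\chi)}$ with $\gr\mathcal{A}(a\chi+bI)=A(a\chi+bI)$.

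Second, I check the classical equality $A(a\chi+bI)=A(\chi)$ in $S(\glnm)$. By definition, $A(\chi)$ is generated by the functions $x\mapsto f(x+t\chi)$ for $f\in ZS(\glnm)$ and $t\in\C$. Equivalently, it is generated by all Taylor coefficients in $t$ of these shifts, since these coefficients are linear combinations of finitely many values of $t$.

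Dilation is immediate: $f(x+t(a\chi))=f(x+(ta)\chi)$, and $t\mapsto ta$ is a bijection of $\C$, so $A(a\chi)=A(\chi)$.

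For the scalar shift, write $f(x+t\chi+tbI)$. Identifying $\glnm^*$ with $\glnm$ via the trace form, the element $I$ corresponds to the linear function $\mathrm{tr}$. Translation by $sI$ preserves $\mathfrak{g}$-invariance, so $f(\cdot+sI)\in ZS(\glnm)$ for every $s$. Hence $f(x+t\chi+tbI)=g_{t}(x+t\chi)$ with $g_{t}=f(\cdot+tbI)$. I would expand $g_{t}$ as a polynomial in $t$ whose coefficients $h_{k}$ are central elements (derivatives of $f$ in the central direction $I$). Then the Taylor coefficients of $t\mapsto f(x+t\chi+tbI)$ are combinations of the Taylor coefficients of $h_{k}(x+t\chi)$, all of which lie in $A(\chi)$. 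This gives $A(\chi+bI)\subset A(\chi)$. The reverse inclusion follows by applying the same argument with $-b$. Combined with the dilation step, this gives $A(a\chi+bI)=A(\chi)$.

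Putting these together, $\mathcal{A}(a\chi+bI)$ is a commutative subalgebra of $U(\glnm)^{\mathfrak{z}(\chi)}$ with associated graded $A(\chi)$. Uniqueness then gives \eqref{A-chi-aff-invariance}.

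The only delicate point is making sure the uniqueness theorem is stated for exactly this setting, namely diagonal $\chi$ and invariants of the centralizer, and that no regularity assumption on $\chi$ is hidden in it. Here $\chi$ may be quite degenerate, for example $\diag(1,\ldots,1,0,\ldots,0)$. I take the paper's recollection of that result at face value. If it required more care, the alternative is the explicit route of the preceding Remark: compare generators of $\widetilde{\mathcal{A}}_{n+m}(C)$ under $C\mapsto aC+(b,\ldots,b)$ directly.
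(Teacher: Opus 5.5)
Your proposal is correct and is essentially the paper's argument. The paper also deduces the statement from three facts: $\mathcal{A}(\chi)$ is the unique commutative subalgebra of $U(\glnm)^{\mathfrak{z}(\chi)}$ with $\gr\mathcal{A}(\chi)=A(\chi)$; $\mathfrak{z}(a\chi+bI)=\mathfrak{z}(\chi)$; and the classical $A(\chi)$ is invariant under dilations and scalar shifts. The only difference is that you verify the classical invariance (via $f(\cdot+sI)\in ZS(\glnm)$ and Taylor coefficients in $t$), which the paper simply asserts.
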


Consider the composition of the Olshanski homomorphism and the evaluation map $ev_{n+m}\circ\phi_n: Y(\mathfrak{gl}_n)\to U(\mathfrak{gl}_{n+m})$. Clearly, it takes Bethe subalgebras in the Yangian to some commutative subalgebras in $U(\mathfrak{gl}_{n+m})$. We have the following 

\begin{thm}\cite{IR}
   For any $C=(c_{1}\lc c_{n})$, we have $ev_{n+m}\circ\phi_{n} (\mathcal{B}_{n} (C))\subset\mathcal{A}(C,0)$, where $(C,0)$ is the block diagonal matrix with the upper-left $n\times n$ block $\operatorname{diag}(c_{1}\lc c_{n})$ and the lower-right $m\times m$ block $0$.
\end{thm}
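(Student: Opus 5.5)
We use the characterization recalled just before Proposition \ref{pr:A-chi-aff-invariance}: for a Cartan element $\chi$, $\mathcal{A}(\chi)$ is the \emph{unique} commutative subalgebra of $U(\glnm)^{\mathfrak{z}(\chi)}$ with $\gr\mathcal{A}(\chi)=A(\chi)$. So it suffices to show that every coefficient of $ev_{n+m}\phi_n(D_n(C))$ is $\mathfrak{z}((C,0))$-invariant and has symbol in $A((C,0))$, and then use a maximality argument.

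\textbf{Step 1: reduction to the generic case.} If the $c_i$ are pairwise distinct and nonzero, then $\mathfrak{z}((C,0))=\mathfrak h_n\oplus\glm$. By the Olshanski centralizer property, $\phi_n(\Yn)$ commutes with $j_m(\Ym)$, so its image under $ev_{n+m}$ commutes with the lower-right $\glm$. Weight considerations in \eqref{B=t} show that the quantum minors $t_J(u)$ commute with $\mathfrak h_n$. Hence $\mathcal{C}:=ev_{n+m}\phi_n(\mathcal{B}_n(C))\cdot\mathcal{Z}_{n+m}$ is a commutative subalgebra of $U(\glnm)^{\mathfrak{z}}$, commutativity coming from \cite{KS}, \cite{MTV6}. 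Degenerate $C$ will be handled at the end by a closure or limit argument, since the families $C\mapsto\mathcal{B}_n(C)$ and $\chi\mapsto\mathcal{A}(\chi)$ are compatible with limits.

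\textbf{Step 2: symbols.} Next we compute the symbol of $ev_{n+m}\phi_n(t_J(u))$. In the symmetric algebra $\phi_n$ becomes a Schur-complement construction: classically $T^{-1}$ of the evaluation is $(1+E/u)^{-1}$, so its upper $n\times n$ block, inverted, is a Schur complement of $u+E$. The leading terms of the minors of this Schur complement can be written as $\det(u+E+t\chi)$-type expressions. Concretely, we expect $\sum_J \prod_{j\notin J}c_j\, t_J(u)$ to become, up to the factor $\det(u+E_{22})$, the coefficient extraction from $\det(u+E+s(C,0))$. These are exactly shifted invariants $\det(x+t\chi)$, so the symbols lie in $A((C,0))$.

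\textbf{Step 3: conclusion.} The algebra $\gr\mathcal{C}$ is Poisson-commutative and contained in $A(\chi)$. Because $A(\chi)$ is maximal Poisson-commutative in $S(\glnm)^{\mathfrak z(\chi)}$ and $\mathcal{A}(\chi)$ is the unique quantization, we argue as follows. Take $\mathcal{C}$ and adjoin $\mathcal{A}(\chi)$. Since $\mathcal{A}(\chi)$ is generated by lifts of the generators of $A(\chi)$, one checks that $\mathcal{C}+\mathcal{A}(\chi)$ is commutative. Then, by the HKRW--Tarasov uniqueness, the commutative algebra generated by both has symbol algebra $A(\chi)$ and must therefore equal $\mathcal{A}(\chi)$, which gives the inclusion.

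\textbf{Main obstacle.} The hardest step is this commutativity of $\mathcal{C}$ with $\mathcal{A}(\chi)$. We would try to prove it directly by showing that the generators of $\mathcal{C}$ \emph{are} quantizations of the generators of $A(\chi)$: lift the Step 2 symbol computation to an exact equality using the quantum Sylvester identity \eqref{sylvester}. This should match $ev_{n+m}D_{n+m}(C,0)$-type difference operators, via Lemma \ref{lem1}, with the known generators of $\mathcal{A}(\chi)$ given in \cite{Ryb2}/\cite{FFTL} as images of $\mathcal{B}_{n+m}(\chi)$.
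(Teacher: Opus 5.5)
The paper does not prove this statement; it is quoted from \cite{IR}. Your Steps 1 and 3 are a valid reduction: if $\mathcal C\subset U(\glnm)^{\mathfrak z(\chi)}$ commutes with $\mathcal A(\chi)$, then the algebra they generate is commutative. Its associated graded is Poisson-commutative in $S(\glnm)^{\mathfrak z(\chi)}$ and contains $A(\chi)$, so it equals $A(\chi)$ by maximality, and uniqueness then forces $\mathcal C\subset\mathcal A(\chi)$.

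\textbf{First gap: the commutativity claim.} The reduction puts the entire content into the claim that $\mathcal C$ commutes with $\mathcal A(\chi)$, and ``one checks'' is not an argument. Lifts of Poisson-commuting symbols need not commute. Uniqueness could be applied to $\mathcal C$ directly only if you knew $\gr\mathcal C=A(\chi)$, which is essentially the equality the paper leaves as an expectation. Step 2 is only heuristic; even if the symbols of the generators lay in $A(\chi)$, this would not give $\gr\mathcal C\subset A(\chi)$, since leading terms can cancel. Step 3 does not actually need it.

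The remedy in your last paragraph is the right idea, but once carried out it proves the inclusion outright and makes Steps 2--3 superfluous. Concretely, use \eqref{B=t} and \eqref{sylvester} as in Lemma \ref{lem1}, but with general $C$.
\begin{itemize}
\item Up to shifts of $u$, $j_m(\qdet_m)\,\phi_n(B_k(\cdot,C))$ equals $B_{k+m}(\cdot,(C,0))$, because for the twist $(C,0)$ only subsets $J\supseteq\{n+1\lc n+m\}$ contribute.
\item The same observation gives $B_m(u,(C,0))=c_1\cdots c_n\, j_m(\qdet_m(u))$. Hence, when $c_1\cdots c_n\neq 0$, the coefficients of $ev_{n+m}\bigl(j_m(\qdet_m(u))\bigr)^{\pm1}$ also lie in $ev_{n+m}(\mathcal B_{n+m}(C,0))$.
\item What remains is the inclusion $ev_{n+m}(\mathcal B_{n+m}(\chi))\subset\mathcal A(\chi)$ for diagonal $\chi$. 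This is an extra input you must justify or cite precisely, e.g.\ via the comparison with the Gaudin algebra sketched in the Remark after Corollary \ref{duality}.
\end{itemize}

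\textbf{Second gap: non-generic $C$.} The family $\chi\mapsto\mathcal A(\chi)$ is not continuous at non-regular $\chi$. Limits of $\mathcal A(\chi')$ as $\chi'\to\chi$ are in general strictly larger than $\mathcal A(\chi)$. For example, $\mathcal A(s\chi_0)=\mathcal A(\chi_0)$ for all $s\neq 0$ by Proposition \ref{pr:A-chi-aff-invariance}, while $\mathcal A(0)$ is the center. So a closure argument only places the image in some limit subalgebra, not in $\mathcal A(C,0)$. This matters because $C=(1\lc 1)$, the case the paper actually uses, is non-generic.

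If some $c_i=0$, the statement as written is in fact false. For $n=m=1$ and $C=(0)$, $\mathcal B_1(C)$ contains $t^{(1)}_{11}$, and $ev_2\phi_1(t^{(1)}_{11})=e_{11}$ is not in $\mathcal A(0)$, which is the center of $U(\gl_2)$. So the theorem must be read with all $c_i\neq 0$, i.e.\ an invertible twist.

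For such $C$ no limit is needed at all. $\mathcal B_n(C)$ commutes with the centralizer of $C$ in $\gln$, embedded via the $t^{(1)}_{ij}$, which $\phi_n$ fixes. So your Step 1 holds directly with $\mathfrak z((C,0))=\mathfrak z(C)\oplus\glm$, and the direct argument above applies verbatim.
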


In particular, $ev_{n+m}\circ\phi_{n} (\mathcal{B}_{n} (C))\subset\mathcal{A}(C_{1,0})$. Similarly, one can get $ev_{n+m}\circ\psi_{m} (\mathcal{B}_{m} (C))\subset\mathcal{A}(C_{0,1})$. Actually, we expect that $ev_{n+m}(\phi_{n}(\mathcal{B}_{n}))\cdot \mathcal{Z}_{n+m} = \mathcal{A}(C_{1,0})$ and $ev_{n+m}(\psi_{m}(\mathcal{B}_{m}))\cdot \mathcal{Z}_{n+m} = \mathcal{A}(C_{1,0})$. Then, since $C_{1,0} = -C_{1,0}+I$, Corollary \ref{duality} is just a special case of Proposition \ref{pr:A-chi-aff-invariance}. This observation was our first motivation to look at relation \eqref{A-chi-aff-invariance} in the context of the $(\gln, \glm)$-duality.

\section{The duality on the level of representations}\label{Sec4}
\subsection{} For any $\gl_{m}$-module $W$, let $W[b]$ denote the weight subspace of $W$ of weight $b$. Recall that a vector $w\in W$ is called singular if $e_{ij}w=0$ for any $i<j$. Denote by $W^{m\text{-sing}}[b]$ the subspace of singular vectors in $W[b]$.

Let $V$ be a finite-dimensional $\gl_{m}$-module, and let $M_{\mu}$ be the Verma $\gl_{m}$-module of highest weight $\mu$. Fix a vector $v_{\mu}\in M_{\mu}[\mu]$. Consider a linear map $\theta^{V}_{\mu,b}: (V\otimes M_{\mu})^{m\text{-sing}}[\mu+b]\rightarrow V[b]$ defined by  
\begin{equation}\label{theta}
v=\theta^{V}_{\mu,b}(v)\otimes v_{\mu}+\sum_{i}w_{i}\otimes v_{i},
\end{equation}
where $v_{i}\in M_{\mu}$ are weight vectors of weights distinct from $\mu$. The proof of the following proposition is a simple exercise.

\begin{prop}\label{theta'}
    There exists a Zariski open subset $I(V)$ of $\C^{m}$such that for any $\mu\in I(V)$ and any weight $b$ of the $\gl_{n}$-module $V$, the map $\theta^{V}_{\mu,b}$ is an isomorphism.
\end{prop}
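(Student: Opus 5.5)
The plan is to use the standard identification of singular vectors in $V\otimes M_{\mu}$ with $V$ via the Shapovalov form, or more concretely via the tensor identity $V\otimes M_{\mu}\cong \Um\otimes_{U(\mathfrak b)}(V\otimes\C_{\mu})$. Let $\mathfrak n_{-}$ and $\mathfrak n_{+}$ denote the lower and upper nilpotent subalgebras of $\glm$. Since $M_{\mu}$ is free of rank one over $U(\mathfrak n_{-})$, the module $V\otimes M_{\mu}$ is isomorphic to $U(\mathfrak n_{-})\otimes V$ as an $\mathfrak n_{-}$-module. It has a filtration by Verma modules $M_{\mu+b'}$, where $b'$ runs over the weights of $V$ counted with multiplicity. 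If all these Verma modules are irreducible and no two distinct weights $\mu+b'$ and $\mu+b''$ are linked by the dot action, then the filtration splits. In that case $V\otimes M_{\mu}\cong\bigoplus_{b'}M_{\mu+b'}\otimes V[b']$, so $\dim (V\otimes M_{\mu})^{m\text{-sing}}[\mu+b]=\dim V[b]$. I take $I(V)$ to be the complement of the finitely many hyperplanes $\langle\mu+b'+\rho,\alpha^{\vee}\rangle\in\Z$, for $\alpha$ a positive root and $b'$ a weight of $V$. Strictly this is a countable union of hyperplanes, so to get a genuinely Zariski open set I instead argue directly as follows.

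First, \textbf{injectivity} of $\theta^{V}_{\mu,b}$ holds for every $\mu$. Suppose $v$ is singular with $\theta(v)=0$. Write $v=\sum_{\beta}w_{\beta}\otimes m_{\beta}$, graded by the weight $\mu-\beta$ of the $M_{\mu}$-component, and take $\beta\neq 0$ maximal in height with $w_{\beta}\neq0$. Then $m_{\beta}$ component-wise cannot be annihilated by all $e_{i,i+1}$ unless it is highest weight. The space of singular vectors of $M_{\mu}$ in weight $\mu-\beta$ vanishes for generic $\mu$, and this is the only place where genericity enters injectivity. To make this clean, I instead prove \emph{surjectivity} together with the dimension count.

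Second, \textbf{surjectivity}. Given $w\in V[b]$, I look for $v=w\otimes v_{\mu}+\sum_{\beta>0}x_{\beta}$ with $x_{\beta}\in V[b+\beta]\otimes M_{\mu}[\mu-\beta]$ and $e_{i,i+1}v=0$ for all $i$. The height of $\beta$ is bounded by the finitely many weights of $V$, so this is a finite linear system. Solve it recursively in the height of $\beta$: at each step one must invert the action $e_{i,i+1}\colon M_{\mu}[\mu-\beta]\to\bigoplus_{i}M_{\mu}[\mu-\beta+\alpha_{i}]$ restricted appropriately. Equivalently, one must invert the Shapovalov form $S_{\beta}(\mu)$ on $M_{\mu}[\mu-\beta]$. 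The solution exists and is unique whenever $\det S_{\beta}(\mu)\neq0$ for all $\beta$ with $b+\beta$ a weight of $V$. Each such determinant is a nonzero polynomial in $\mu$ (by the Shapovalov formula), and only finitely many $\beta$ occur. So $I(V)=\{\mu : \prod_{\beta}\det S_{\beta}(\mu)\neq0\}$, with $\beta$ ranging over the finitely many differences of weights of $V$ lying in the positive root cone, is Zariski open and nonempty. The same nondegeneracy gives uniqueness of the correction terms, hence injectivity, so $\theta^{V}_{\mu,b}$ is an isomorphism for all $b$ simultaneously.

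The \textbf{main obstacle} is organizing the recursive solution so that it really reduces to the nondegeneracy of Shapovalov forms. I would handle it by identifying $(V\otimes M_{\mu})^{m\text{-sing}}[\mu+b]$ with $\Hom_{\glm}(M_{\mu+b},V\otimes M_{\mu})$. For $\mu$ with $M_{\mu}$ irreducible on the relevant weights, one then has $\Hom_{\glm}(M_{\mu+b},V\otimes M_{\mu})\cong \Hom_{\glm}(M_{\mu+b}\otimes V^{*},M_{\mu}^{\vee})$. Taking the highest-weight-vector component then gives the map to $V[b]$, and this agrees with $\theta$.
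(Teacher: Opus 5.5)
The paper gives no argument here (it calls the statement a simple exercise), so your proposal has to stand on its own. Its central step does not: there is a gap in the existence part of your recursion. There you need $x_\beta\in V[b+\beta]\otimes M_\mu[\mu-\beta]$ with $(1\otimes e_{i,i+1})x_\beta=-(e_{i,i+1}\otimes 1)x_{\beta-\alpha_i}$ for \emph{all} $i$ simultaneously. Consider the map $E_\beta\colon M_\mu[\mu-\beta]\to\bigoplus_i M_\mu[\mu-\beta+\alpha_i]$, $m\mapsto(e_{i,i+1}m)_i$. It is injective when $\det S_\beta(\mu)\neq0$, but for $m\geq 3$ it is in general not surjective. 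For example, for $m=3$ and $\beta=2\alpha_1+\alpha_2$ it goes from a $2$-dimensional space to a $3$-dimensional one. So each step of the recursion is an overdetermined system. Nondegeneracy of the Shapovalov forms gives uniqueness of the correction terms, hence injectivity of $\theta^V_{\mu,b}$, but not their existence. Via the Shapovalov pairing, the image of $E_\beta$ is the annihilator of the syzygies $\{(m_i):\sum_i e_{i+1,i}m_i=0\}$, which come from the Serre relations in $U(\mathfrak n_-)$. Nothing in your argument checks that the right-hand sides satisfy these linear conditions; ``equivalently, invert the Shapovalov form'' is exactly the unproved point. Your injectivity sketch also has the order reversed. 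You should take $\beta$ \emph{minimal} with $x_\beta\neq0$. Then $(1\otimes e_{i,i+1})x_\beta=0$ for all $i$, so the $M_\mu$-components of $x_\beta$ are singular vectors, which $\det S_\beta(\mu)\neq0$ excludes.

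Your last paragraph points to a correct proof, but it omits the step that carries the content: why taking the highest-weight component is a bijection onto $V[b]$. To complete it, write $N=M_{\mu+b}\otimes V^*\cong U(\glm)\otimes_{U(\mathfrak b)}(\C_{\mu+b}\otimes V^*)$.
\begin{itemize}
\item A $\mathfrak b$-map out of $\C_{\mu+b}\otimes V^*$ only involves the weight spaces of weight $\mu-\gamma$ with $0\le\gamma\le b'-b$, where $b'$ is a weight of $V$.
\item If the Shapovalov map $M_\mu\to M_\mu^\vee$ is bijective on these finitely many weight spaces (a Zariski open condition), composing with it gives $\Hom_{\glm}(N,M_\mu)\cong\Hom_{\glm}(N,M_\mu^\vee)$.
\item Since $M_\mu^\vee$ is the weight part of the module coinduced from $\C_\mu$ over $\mathfrak b_-$, the latter equals $\Hom_{\mathfrak b_-}(N,\C_\mu)=\Hom_{\mathfrak h}(N/\mathfrak n_-N,\C_\mu)$.
\item $N$ is free over $U(\mathfrak n_-)$ on $v_{\mu+b}\otimes V^*$, so this space is $(V^*[-b])^*\cong V[b]$. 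Unwinding the adjunction, the composite is $x\mapsto\theta^V_{\mu,b}(x)$.
\end{itemize}

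Alternatively, combine the corrected injectivity argument with the dimension count from your first paragraph, using a finite condition instead of a countable one. Irreducibility of the Verma modules is unnecessary. It suffices that the weights $\mu+b'$, for distinct weights $b'$ of $V$, have distinct central characters; this excludes only finitely many proper affine subspaces. Then the $\chi_{\mu+b}$-block summand of $V\otimes M_\mu$ has character $\dim V[b]\cdot\operatorname{ch}M_{\mu+b}$. Its weight space of weight $\mu+b$ therefore has dimension $\dim V[b]$ and consists of singular vectors. This gives $\dim(V\otimes M_\mu)^{m\text{-sing}}[\mu+b]\ge\dim V[b]$, and together with injectivity of $\theta^V_{\mu,b}$ it yields the isomorphism.
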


Fix $a\in \Z_{\geq 0}^{n}$, $a=(a_{1},a_{2},\lc a_{n})$, and denote $S^{(m)}(a)=S^{a_{1}}\C^{m}\otimes S^{a_{2}}\C^{m}\otimes\dots\otimes S^{a_{n}}\C^{m}$. Also, fix a $\glm$-weight $\lambda^{(m)}\in I(S^{(m)}(a))$, a sequence of $n$ complex numbers $z=(z_{1}\lc z_{n})$, and
consider the following tensor product of evaluation $\Ym$-modules
\begin{equation}\label{S(a,z,lambda)}
S^{(m)}(a,z,\lambda^{(m)})=S^{a_{1}}\C^{m}(z_{1})\otimes S^{a_{2}}\C^{m}(z_{2})\otimes\dots\otimes S^{a_{n}}\C^{m}(z_{n})\otimes M_{\lambda^{(m)}}
\end{equation}
together with the corresponding homomorphism $\rho_{a,z,\lambda^{(m)}}:\Ym\rightarrow \End (S^{(m)}(a,z,\lambda^{(m)}))$.

Fix a $\glm$-weight $b$ of the representation $S^{(m)}(a)$, that is, $b=(b_{1},b_{2},\lc b_{m})\in\Z_{\geq 0}^{m}$, where $\sum_{j=1}^{m}b_{j}=\sum_{i=1}^{n}a_{i}$. It is known (see, for example, \cite[Proposition 4.7]{MTV6}) that the Bethe subalgebra $\mathcal{B}_{m}\subset\Ym$ commutes with the image of the inclusion $\Um\hookrightarrow\Ym$ given by $e_{ij}\mapsto t_{ij}^{(1)}$. Therefore, any element of $\rho_{a,z,\lambda^{(m)}}(\mathcal{B}_{m})$ sends $\bigl(S^{(m)}(a,z,\lambda^{(m)})\bigr)^{sing}[\lambda^{(m)}+b]$ to $\bigl(S^{(m)}(a,z,\lambda^{(m)})\bigr)^{sing}[\lambda^{(m)}+b]$. Therefore, using Proposition \ref{theta'}, we can define the following commutative subalgebra of $\End (S^{(m)}(a)[b])$:
\[\overline{\mathcal{B}}_{m}(a,z;b,\lambda^{(m)})=\{\theta^{S^{(m)}(a)}_{\lambda^{(m)},b}\circ l\circ (\theta^{S^{(m)}(a)}_{\lambda^{(m)},b})^{-1}\,\vert\, l\in\rho_{a,z,\lambda^{(m)}}(\mathcal{B}_{m})\}.\]

Now, consider the $\gl_{n}$-module $S^{(n)}(b)=S^{b_{1}}\C^{n}\otimes S^{b_{2}}\C^{n}\otimes\dots\otimes S^{b_{m}}\C^{n}$. Let $w=(w_{1}\lc w_{m})$ be a sequence of $m$ complex numbers and let $\lambda^{(n)}$ be a $\gln$-weight from $I(S^{(n)}(b))$. Consider the commutative subalgebra $\overline{\mathcal{B}}_{n}(b,w;a,\lambda^{(n)})$ of $\End (S^{(n)}(b)[a])$ obtained in the same way as $\overline{\mathcal{B}}_{m}(a,z;b,\lambda^{(m)})$ except that the roles of $m$ and $a$ are interchanged with the roles of $n$ and $b$, respectively, and the sequences $z$ and $\lambda^{(m)}$ are replaced with the sequences $w$ and $\lambda^{(n)}$, respectively. As we will see later, it is convenient to denote the components of $\lambda^{(n)}$ and $\lambda^{(m)}$ as follows: $\lambda^{(n)}=(-\lambda_{n},-\lambda_{n-1}\lc-\lambda_{1})$, $\lambda^{(m)}=(\lambda_{n+1}\lc\lambda_{n+m})$. Denote $\lambda=(\lambda_{1}\lc\lambda_{m+n})$. In what follows, for each $\lambda$ in some Zariski open subset of $\C^{m+n}$,  we will construct an isomorphism $\Psi_{\lambda}:S^{(m)}(a)[b]\rightarrow S^{(n)}(b)[a]$ such that under the corresponding identification of the algebras $\End (S^{(m)}(a)[b])$ and $\End (S^{(n)}(b)[a])$, the subalgebras $\overline{\mathcal{B}}_{m}(a,z;b,\lambda^{(m)})$ and $\overline{\mathcal{B}}_{n}(b,w;a,\lambda^{(n)})$ will coincide provided that 
\begin{equation}\label{z w}
z_{i}=-\lambda_{n-i+1}+a_{i}-i,\quad w_{j}=\lambda_{n+j}+b_{j}-j
\end{equation}
for each $i=1\lc n$, $j=1\lc m$.

\subsection{An isomorphism of Yangian modules}
Let $a$, $b$, $z$, $w$, $\lambda$, $\lambda^{(n)}$, and $\lambda^{(m)}$ be as in the previous section, in particular, relations \eqref{z w} hold. Define $\nu=(\nu_{1}\lc\nu_{n})\in\C^{n}$ and $\mu=(\mu_{1}\lc\mu_{m})\in\C^{m}$ by $\nu_{i}=\lambda^{(n)}_{i}-a_{n-i+1}$, $\mu_{j}=\lambda^{(m)}_{n+j}+b_{j}$, $i=1\lc n$, $j=1\lc m$. 

To distinguish between objects related to $\gln$, $\glm$, and $\glnm$, let us use superscripts $(n)$, $(m)$, and $(n+m)$, respectively. For example, $M_{\lambda^{(n)}}^{(n)}$, $M_{\lambda^{(m)}}^{(m)}$, and $M_{\lambda}^{(n+m)}$ are Verma modules of $\gln$, $\glm$, and $\glnm$, respectively.

Using the inclusion $i_{n}:\gln\hookrightarrow\glnm$, $i_{n}(e^{(n)}_{ij})= e^{(n+m)}_{ij}$, 
let us consider $M_{\lambda}^{(n+m)}$ as a $\gln$-module. 
Denote by  $M_{\lambda}^{n\text{-sing}}[\nu]$ the space of $\gln$-singular vectors of weight $\nu$ in $M_{\lambda}^{(n+m)}$.

Recall the Olshanski homomorphism $\psi_{m}: \Ym\hookrightarrow\Ynm$ introduced in Section \ref{Oh}. A remarkable property of $\psi_{m}$ is that $\psi_{m}(\Ym)$ commutes with $i_{n}(\Yn)$ (see \cite[Lemma 1.11.3 and Corollary 1.7.2]{Mol2}). In particular, $ev_{n+m}(\psi_{m}(\Ym))$ belongs to the centralizer $\Unm^{\gln}$ of $i_{n}(\gln)$ in $\Unm$. Since $\Unm^{\gln}$ acts on $M_{\lambda}^{n\text{-sing}}[\nu]$, the latter becomes a $\Ym$-module.

Recall the $\Ym$-module $S^{(m)}(a,z,\lambda^{(m)})$ defined by formula \eqref{S(a,z,lambda)}.

\begin{thm}\label{iso of Y m}
    For generic $\lambda$, the $\Ym$-module $M_{\lambda}^{n\text{-sing}}[\nu]$ is isomorphic to $S^{(m)}(a,z,\lambda^{(m)})$.
\end{thm}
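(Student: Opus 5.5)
Both sides are highest weight modules over $\Ym$ (for generic $\lambda$, with $\Ym$-weights that can be computed explicitly), and the target is irreducible for generic parameters, so the plan is to identify a cyclic vector, match highest weights and dimensions, and conclude by irreducibility.

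\emph{The module $S^{(m)}(a,z,\lambda^{(m)})$.} Tensor products of evaluation modules whose evaluation parameters are in general position relative to the weights are irreducible highest weight $\Ym$-modules. The Verma factor $M_{\lambda^{(m)}}$ is irreducible for generic $\lambda^{(m)}$, and the parameters $z_i$ from \eqref{z w} are generic in $\lambda$. So for generic $\lambda$ the module $S^{(m)}(a,z,\lambda^{(m)})$ is irreducible with highest $\Ym$-weight
\[
\prod_{i=1}^{n}\frac{u-z_i+\delta_{j1}a_i}{u-z_i}\cdot\frac{u+\lambda_{n+j}}{u},\qquad j=1\lc m .
\]
Its character as a $\glm$-module is $\operatorname{ch} S^{(m)}(a)\cdot\operatorname{ch}M_{\lambda^{(m)}}$.

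\emph{The module $M_\lambda^{n\text{-sing}}[\nu]$.} First I compute its $\glm$-character, where $\glm$ acts through $t^{(1)}_{ij}$ of $\Ym$, which maps to $e_{n+i,n+j}$ up to a shift. By PBW, $M_\lambda^{(n+m)}\cong U(\mathfrak n^-_{n})\otimes S(\C^m\otimes\C^{n*})\otimes U(\mathfrak n^-_m)$ as vector spaces, with the middle factor spanned by $e_{n+j,i}$. For generic $\lambda$ the $\gln$-singular vectors of $\gln$-weight $\nu$ then correspond to the $\gln$-weight $\nu-\lambda^{(n)}$ part of the middle factor tensored with $U(\mathfrak n^-_m)$. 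That $\gln$-weight fixes the degrees $a_{n-i+1}$ in the variables with index $i$, so the resulting $\glm$-character is $\operatorname{ch}S^{(m)}(a)\cdot\operatorname{ch}M_{\lambda^{(m)}}$, which matches. To make this rigorous I will use an argument like Proposition~\ref{theta'}: for generic $\lambda$, $M_\lambda$ restricted to $\gln$ is a direct sum of Verma modules.

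Next I need the cyclic highest vector. Let $v\in M_\lambda^{n\text{-sing}}[\nu]$ be the vector of top $\glm$-weight $\lambda^{(m)}+b$ with $b=(\sum a_i,0,\dots,0)$; this is the $\gln$-singular projection of $\prod_i e_{n+1,i}^{a_i}v_\lambda$. I then compute the action of $ev_{n+m}\psi_m(t_{jj}(u))$ on $v$. Using \eqref{sylvester}, $\psi_m(t_{jj}(u))$ is a ratio of quantum minors of $T(u)$ of the form $t_{\{1..n,n+j\}}(u')/t_{\{1..n\}}(u')$. On $\gln$-singular vectors of known weight, the evaluations of these quantum minors act by explicit scalars, via the Capelli/quantum-minor highest weight formulas. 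Matching the result with the $\Ym$-weight above is what should yield the specific shifts $z_i=-\lambda_{n-i+1}+a_i-i$.

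The main obstacle is this last step. The vector $v$ is not simply $\prod e_{n+1,i}^{a_i}v_\lambda$, and the minor $t_{\{1..n,n+j\}}$ acting on it requires care. If a direct computation is too messy, I would instead use Molev's theorem (\cite[Ch.~1.11]{Mol2}) that the space of $\gln$-highest vectors in a $\Ynm$-module of known highest weight is a $\Ym$-highest weight module with explicitly shifted weight, and apply it to $M_\lambda$ as an evaluation module.

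Once the weights are matched, the conclusion is formal. Both modules lie in category $\mathcal O$ for $\glm$, so the subspace $Y(\glm)v$ of $M_\lambda^{n\text{-sing}}[\nu]$ is a highest weight module of the same highest weight, and it surjects onto the irreducible module $S^{(m)}(a,z,\lambda^{(m)})$. Since the characters coincide, $Y(\glm)v$ must be the whole space and the surjection is an isomorphism. It remains to record that all genericity conditions used above cut out a nonempty Zariski open subset of $\lambda$.
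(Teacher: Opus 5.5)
Your argument is correct in outline. It differs from the paper's in how $M_\lambda^{n\text{-sing}}[\nu]$ is handled.

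\textbf{Comparison with the paper.} The paper claims that \emph{both} modules are irreducible highest weight $\Ym$-modules with highest weight \eqref{weight1}. For $M_\lambda^{n\text{-sing}}[\nu]$ it adapts the centralizer-construction argument of \cite[Theorem 8.5.4]{Mol2} from finite-dimensional $\glnm$-modules to $M_\lambda$; that argument gives irreducibility and the highest weight together. For $S^{(m)}(a,z,\lambda^{(m)})$ it adapts \cite[Theorem 6.1.1]{Mol2}, using restricted duals.

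You need irreducibility only for $S^{(m)}(a,z,\lambda^{(m)})$. On the other side you replace it by two ingredients:
the parabolic PBW decomposition $M_\lambda\cong S(\mathfrak p^-)\otimes M^{(n)}_{(\lambda_1,\dots,\lambda_n)}\otimes M^{(m)}_{\lambda^{(m)}}$, with $\mathfrak p^-=\operatorname{span}\{e_{n+j,i}\}$;
and the sandwich $\operatorname{ch}S^{(m)}(a,z,\lambda^{(m)})\le\operatorname{ch}\Ym v\le\operatorname{ch}M_\lambda^{n\text{-sing}}[\nu]=\operatorname{ch}S^{(m)}(a,z,\lambda^{(m)})$.

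This is more elementary, since it needs no density argument for an infinite-dimensional module. It also shows in passing that $M_\lambda^{n\text{-sing}}[\nu]$ is cyclic on $v$. You only need Proposition \ref{theta'} for the finite-dimensional summand $S^{|a|}(\mathfrak p^-)$, where $|a|=\sum_i a_i$. You do not need all of $M_\lambda|_{\gln}$ to split into Verma modules.

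\textbf{The highest weight computation.} The price of your route is that you must compute the highest weight yourself, but this is shorter than you fear.

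First, $v$ is automatically a $\Ym$-highest vector. All $\glm$-weights of $M_\lambda^{n\text{-sing}}[\nu]$ lie in $\lambda^{(m)}+|a|\epsilon_1-\sum_{i<j}\Z_{\ge0}(\epsilon_i-\epsilon_j)$, and the top weight space is one-dimensional.

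Next, two applications of \eqref{sylvester} give $\psi_m(t_{jj}(u))=t_{\{1,\dots,n\}}(u+n)^{-1}\,t_{\{1,\dots,n,n+j\}}(u+n)$.

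For $j\ge2$, weight considerations force $e_{i,n+j}v=0$. So $v$ is a highest vector for $\gl_{\{1,\dots,n,n+j\}}$ of weight $(\nu,\lambda_{n+j})$, where $\nu_i=\lambda_i-a_{n-i+1}$ is the $\gln$-weight of $v$. Both minors then act by Harish-Chandra scalars.

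For $j=1$, $v$ is not singular for $\gl_{n+1}$. However, it lies in the Verma submodule $U(\gl_{n+1})v_\lambda$, because only $e_{n+1,i}$ and $\mathfrak n^-_n$ can reach this weight. There the central minor $t_{\{1,\dots,n+1\}}(u)$ acts by $\prod_{i=1}^{n+1}(1+\lambda_i/(u-i+1))$.

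Taking the ratio gives exactly \eqref{weight1}, with $z_i$ as in \eqref{z w}.

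\textbf{A caveat on genericity.} This applies equally to the paper's proof. Your closing claim, that all genericity conditions cut out a nonempty Zariski-open set, is false. For every $\glm$-submodule $N\subset M_{\lambda^{(m)}}$, the space $S^{(m)}(a)\otimes N$ is a $\Ym$-submodule of $S^{(m)}(a,z,\lambda^{(m)})$. Hence $S^{(m)}(a,z,\lambda^{(m)})$ is reducible on the Zariski-dense union of hyperplanes $\lambda_{n+i}-\lambda_{n+j}+j-i\in\Z_{>0}$, $i<j$. The irreducibility argument therefore proves the isomorphism only for $\lambda$ outside a countable union of proper hypersurfaces. A Zariski-open version would need an additional argument.
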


Here and below, if $s\in\C^{k}$ for some $k$, then "for generic $s$" means "for $s$ from some Zariski open subset of $\C^{k}$. 

\begin{proof}
    If $\alpha$ is a complex number, denote $\alpha(u):=1+\alpha u^{-1}\in\C (u)$.
    The statement of the theorem follows from the fact that for generic $\lambda$, both $M_{\lambda}^{n\text{-sing}}[\nu]$ and $S^{(m)}(a,z,\lambda^{(m)})$ are irreducible highest-weight $\Ym$-modules of the same highest $\Ym$-weight
    \begin{equation}\label{weight1}
\bigl(\lambda_{n+1}(u)\prod_{i=1}^{n}a_{i}(u-z_{i}),\, \lambda_{n+2}(u),\,\lambda_{n+3}(u)\lc\lambda_{n+m}(u)\bigr).
\end{equation}

For $M_{\lambda}^{n\text{-sing}}[\nu]$, this can be proved using the same arguments as in the proof of \cite[Theorem 8.5.4]{Mol2}, we just need to replace the finite-dimensional irreducible representation of $\glnm$ with the Verma module $M_{\lambda}^{(n+m)}$. In $S^{(m)}(a,z,\lambda^{(m)})$ there is an obvious singular vector of weight \eqref{weight1}, which is the tensor profuct of highest-weight vectors in each tensor factor of $S^{(m)}(a,z,\lambda^{(m)})$. The irreducibility of $S^{(m)}(a,z,\lambda^{(m)})$ for generic $\lambda$ can be proved using the arguments, which are similar to those used in the proof of \cite[Theorem 6.1.1]{Mol2}. The only essential difference is that we have to replace dual modules with \textit{restricted} dual modules. (If $V$ is a $\Ym$-module with $\glm$-weight decomposition $V=\bigoplus_{\eta}V[\eta]$ such that $\dim V[\eta]<\infty$ for any $\eta$, the restricted dual $V^{\dagger}$ is the following submodule of the dual module $V^{*}$: $V^{\dagger}=\bigoplus_{\eta}(V[\eta])^{*}$.)   
\end{proof}

We will now deduce the $\Yn$-analog of Theorem \ref{iso of Y m}. Using the inclusion $j_{m}:\glm\hookrightarrow\glnm$, $j_{m}(e^{(m)}_{ij})= e^{(n+m)}_{n+i,n+j}$, let us consider $M_{\lambda}^{(n+m)}$ as a $\glm$-module. Denote by $M_{\lambda}^{m\text{-sing}}[\mu]$ the space of $\glm$-singular vectors of weight $\mu$ in $M_{\lambda}^{(n+m)}$. 

Instead of the map $\psi_{m}$, we will need the map $\phi_{n}$, also introduced in Section \ref{Oh}. Since $\phi_{n}=\delta_{n+m}\psi_{n}\delta_{n}$, the homomorphism $\phi_{n}$ has similar properties to $\psi_{m}$, in particular, $ev_{n+m}(\phi_{n}(\Yn))$ belongs to the centralizer $\Unm^{\glm}$ of $j_{m}(\glm)$ in $\Unm$. Since $\Unm^{\glm}$ acts on $M_{\lambda}^{m\text{-sing}}[\mu]$, the latter becomes a $\Yn$-module.

For any sequence $\eta=(\eta_{1},\eta_{2}\lc\eta_{k})\in\C^{k}$, denote $\eta_{*}=(-\eta_{k},-\eta_{k-1}\lc -\eta_{1})$.
If $V$ is a representation of the Yangian $\Yn$ with the corresponding homomorphism $\rho: \Yn\rightarrow\End (V)$, write $V^{\vee}$ for the $\Yn$-module such that $V^{\vee}=V$ as vector spaces, and the corresponding homomorphism $\rho^{\vee}:\Yn\rightarrow\End (V^{\vee})$ is given by $\rho^{\vee}=\rho\circ\delta_{n}$. Notice that $V^{n\text{-sing}}[\eta]=(V^{\vee})^{n\text{-sing}}[\eta_{*}]$ for any $\gln$-weight $\eta$ of $V$. 

As $\Ynm$-modules, $(M_{\lambda}^{(n+m)})^{\vee}\cong M_{\lambda_{*}}^{(n+m)}$. Together with the relation $\phi_{n}\delta_{n}=\delta_{n+m}\psi_{n}$, this gives the isomorphism of $\Yn$-modules $(M_{\lambda}^{m\text{-sing}}[\mu])^{\vee}\cong M_{\lambda_{*}}^{m\text{-sing}}[\mu_{*}]$, where we consider $M_{\lambda_{*}}$ as a $\glm$-module via the inclusion $i_{m}:\glm\hookrightarrow\glnm$, $i_{m}(e^{(m)}_{ij})= e^{(n+m)}_{ij}$, and $\Yn$ acts on $M_{\lambda_{*}}^{m\text{-sing}}[\mu_{*}]$ via the homomorphism $\Yn\xrightarrow{ev_{n+m}\circ\psi_{n}} U(\glnm)^{\glm}$. Therefore, interchanging $n$ and $m$ and replacing $\lambda$ with $\lambda_{*}$, $\mu$ with $\mu_{*}$, and $\nu$ with $\nu_{*}$ in Theorem \ref{iso of Y m}, we obtain: 

\begin{thm}\label{iso of Y n}
For generic $\lambda$, the $\Yn$-module $(M_{\lambda}^{m\text{-sing}}[\mu])^{\vee}$ is isomorphic to $S^{(n)}(b,w,\lambda^{(n)})$.
\end{thm}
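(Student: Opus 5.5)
The plan is to obtain Theorem \ref{iso of Y n} from Theorem \ref{iso of Y m} by the symmetry $\delta$, following the reduction sketched just before the statement.

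\emph{Step 1: transport the module.} Twisting the $\Ynm$-module $M_{\lambda}^{(n+m)}$ by $\delta_{n+m}$ gives $(M_{\lambda}^{(n+m)})^{\vee}$. Under $\delta_{n+m}$, the element $e_{ij}$ goes to $-e_{n+m+1-j,n+m+1-i}$. So the twisted module is a $\glnm$-module on which the roles of raising and lowering operators are preserved after reversing the basis order. Its highest weight vector is still $v_\lambda$, now of weight $\lambda_{*}$. For generic $\lambda$ the Verma module is irreducible, hence this gives an isomorphism $(M_{\lambda}^{(n+m)})^{\vee}\cong M_{\lambda_{*}}^{(n+m)}$.

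\emph{Step 2: match the subspaces and the $\Yn$-actions.} The twist carries the lower-right block $j_{m}(\glm)$ to the upper-left block $i_{m}(\glm)$, with the basis reversed. Hence $\glm$-singular vectors of weight $\mu$ correspond to $i_m(\glm)$-singular vectors of weight $\mu_{*}$ in $M_{\lambda_{*}}$. By \eqref{phi delta}, $\phi_{n}\delta_{n}=\delta_{n+m}\psi_{n}$. Therefore $ev_{n+m}\circ\phi_{n}$ composed with $\delta_n$ equals $ev_{n+m}\circ\delta_{n+m}\circ\psi_n$, where $\delta_{n+m}$ on $\Ynm$ is compatible with $ev_{n+m}$ up to the twist just described. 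This gives the isomorphism of $\Yn$-modules
\[
(M_{\lambda}^{m\text{-sing}}[\mu])^{\vee}\cong M_{\lambda_{*}}^{m\text{-sing}}[\mu_{*}],
\]
where on the right $\Yn$ acts through $ev_{n+m}\circ\psi_{n}$. The one subtle point here is that $ev_{n+m}\circ\delta_{n+m}$ agrees with $ev_{n+m}$ followed by the automorphism $e_{ij}\mapsto -e_{n+m+1-j,n+m+1-i}$ of $\Unm$. This holds up to the sign in $-u$, which is exactly absorbed because $ev(t_{ij}(-u))=\delta_{ij}-e_{ij}/u$.

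\emph{Step 3: apply Theorem \ref{iso of Y m} with $n\leftrightarrow m$.} We use the parameters $\lambda_{*}$, $\mu_{*}$, $\nu_{*}$ and the roles of $a$ and $b$ swapped. The resulting module is $S^{(n)}(b,w',\lambda')$, whose parameters are read off from \eqref{z w} and from the definitions of $\lambda^{(n)}$, $\lambda^{(m)}$, $\nu$, $\mu$ after the substitution.

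\emph{Step 4 (the main obstacle): bookkeeping.} It remains to check that the data from Step 3 coincide with those in the statement. The new "lower block" highest weight should be $(\lambda_{*})_{m+1},\dots$, which equals $(-\lambda_n,\dots,-\lambda_1)=\lambda^{(n)}$. The evaluation points should come out as $w_j=\lambda_{n+j}+b_j-j$. This is exactly why the paper chose the notation $\lambda^{(n)}=(-\lambda_n,\dots,-\lambda_1)$.

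To verify it, I would compare highest $\Yn$-weights directly. Take the highest weight \eqref{weight1} with the substitution. The weight $\nu$ enters through $\nu_{*}$, so it is reversed and negated. The index reversal in the $z_i$ formula (the $\lambda_{n-i+1}$) must turn into $\lambda_{n+j}$, and the shift $-i$ must be preserved, with no extra shift by $n$ or $m$ appearing. If a discrepancy in the shifts does appear, I would absorb it using $sh_c$ and the freedom in the evaluation parameters, checking against the weight $\mu_j=\lambda_{n+j}+b_j$.

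Genericity of $\lambda$ is preserved, since $\lambda\mapsto\lambda_{*}$ is a linear automorphism and so maps Zariski open sets to Zariski open sets.
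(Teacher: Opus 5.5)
Your proposal is correct and follows essentially the paper's argument: twist by $\delta_{n+m}$ to get $(M_{\lambda}^{(n+m)})^{\vee}\cong M^{(n+m)}_{\lambda_{*}}$, use $\phi_{n}\delta_{n}=\delta_{n+m}\psi_{n}$ to identify $(M_{\lambda}^{m\text{-sing}}[\mu])^{\vee}$ with $M_{\lambda_{*}}^{m\text{-sing}}[\mu_{*}]$ (with $\Yn$ acting through $ev_{n+m}\circ\psi_{n}$), and then apply Theorem~\ref{iso of Y m} with $n\leftrightarrow m$. The bookkeeping in your Step~4 closes with no discrepancy: putting $a'=b$ and $\lambda'=\lambda_{*}$ into \eqref{z w} gives $z'_{j}=-(\lambda_{*})_{m-j+1}+b_{j}-j=\lambda_{n+j}+b_{j}-j=w_{j}$ and lower-block weight $(-\lambda_{n},\dots,-\lambda_{1})=\lambda^{(n)}$, so your fallback of absorbing shifts via $sh_{c}$ is never needed, and it would not have been available anyway, since $sh_{c}$ would also move the Verma factor's evaluation point.
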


\subsection{The duality}
Let $M_{\lambda}^{sing}[\nu,\mu]$ denote the subspace of vectors of $M^{(n+m)}_{\lambda}$, which are $\gln$-singular of weight $\nu$ and $\glm$-singular of weight $\mu$. Notice that 
\[M_{\lambda}^{sing}[\nu,\mu]=\bigl(M_{\lambda}^{n\text{-sing}}[\nu]\bigr)^{m\text{-sing}}[\mu]=\bigl(\bigl(M_{\lambda}^{m\text{-sing}}[\mu]\bigr)^{\vee}\bigr)^{n\text{-sing}}[\nu_{*}].\]
Therefore, for generic $\lambda$, Theorems \ref{iso of Y m} and \ref{iso of Y n} provide two vector space isomorphisms: 
\[\zeta^{(m)}: M_{\lambda}^{sing}[\nu,\mu]\rightarrow (S^{(m)}(a,z,\lambda^{(m)}))^{m\text{-sing}}[\mu]\]
and 
\[\zeta^{(n)}: M_{\lambda}^{sing}[\nu,\mu]\rightarrow (S^{(n)}(b,w,\lambda^{(n)}))^{n\text{-sing}}[\nu_{*}],\] 
respectively.

Let $J_{a,b}$ be the Zariski open subset of all values of $\lambda$ such that all maps in the chain below are well-defined isomorphisms:
\begin{equation}\label{chain}
\begin{split}
 S^{(m)}(a)[b] & \xlongrightarrow{\left(\theta^{S^{(m)}(a)}_{\lambda^{(m)},b}\right)^{-1}} (S^{(m)}(a,z,\lambda^{(m)}))^{m\text{-sing}}[\mu]\xlongrightarrow{\left(\zeta^{(m)}\right)^{-1}} M_{\lambda}^{sing}[\nu,\mu]\xlongrightarrow{\zeta^{(n)}}\\
 & \xlongrightarrow{\zeta^{(n)}} (S^{(n)}(b,w,\lambda^{(n)}))^{n\text{-sing}}[\nu_{*}]\xlongrightarrow{\theta^{S^{(n)}(b)}_{\lambda^{(n)},a}}S^{(n)}(b)[a].
\end{split}
\end{equation}

If $l:V\rightarrow W$ is an isomorphism of vector spaces, let $\widehat{l}: \End (V)\rightarrow\End (W)$ denote the isomorphism of algebras such that $\widehat{l}(A)=l\circ A\circ l^{-1}$. Assume that $\lambda\in J_{a,b}$. Then the chain of isomorphisms \eqref{chain} defines a linear isomorphism $\Psi_{\lambda}: S^{(m)}(a)[b]\rightarrow S^{(n)}(b)[a]$, which, in turn, gives an isomorphism $\widehat{\Psi}_{\lambda}:\End (S^{(m)}(a)[b])\rightarrow \End (S^{(n)}(b)[a])$. 

Recall the algebras $\overline{\mathcal{B}}_{m}(a,z;b,\lambda^{(m)})\in\End (S^{(m)}(a)[b])$ and $\overline{\mathcal{B}}_{n}(b,w;a,\lambda^{(n)})\in\End (S^{(n)}(b)[a])$.
By construction, these algebras are generated by the coefficients of the following difference operators: 
\[D_{m}(a,z;b,\lambda^{(m)})=\widehat{\theta^{S^{(m)}(a)}_{\lambda^{(m)},b}}\bigl(\rho_{a,z,\lambda^{(m)}}(D_{m})\bigr)\] 
and 
\[D_{n}(b,w;a,\lambda^{(n)})=\widehat{\theta^{S^{(n)}(b)}_{\lambda^{(n)},a}}\bigl(\rho_{b,w,\lambda^{(n)}}(D_{n})\bigr),\]
respectively

\begin{thm}\label{duality rep} Suppose that $\lambda\in J_{a,b}$. Then there exist generators $A^{(m)}_{ij}$, $i=1\lc n$, $j=1\lc m$, of $\overline{\mathcal{B}}_{m}(a,z;b,\lambda^{(m)})$ and generators $A^{(n)}_{ji}$, $i=1\lc n$, $j=1\lc m$, of $\overline{\mathcal{B}}_{n}(b,w;a,\lambda^{(n)})$ such that
\[u^{\underline{m}}\prod_{i=1}^{n}(u-z_{i})\,\,D_{m}(a,z;b,\lambda^{(m)})=\sum_{i=0}^{n}\sum_{j=0}^{m}A^{(m)}_{ij}(u+n)^{i}((u-m+1)d)^{j},\]
\[u^{\underline{n}}\prod_{i=1}^{m}(u-w_{i})\,\,D_{n}(b,w;a,\lambda^{(n)})=\sum_{i=0}^{n}\sum_{j=0}^{m}A^{(n)}_{ji}(u+m)^{j}((u-n+1)d)^{i},\]
and
\begin{equation}\label{duality explicit}
\widehat{\Psi}_{\lambda}(A^{(m)}_{ij})=A^{(n)}_{ji}.
\end{equation}
\end{thm}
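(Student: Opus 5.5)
The plan is to transport Theorem \ref{main1} through the chain \eqref{chain}. Everything happens inside $\End(M_{\lambda}^{sing}[\nu,\mu])$. The algebra $\Unm^{\gln\oplus\glm}$ acts on this space, and by \eqref{psi D} and \eqref{phi D} the coefficients of both $\overline{D}_{m}$ and $\overline{D}_{n}$ lie there.

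\textbf{Step 1 (the $m$-side).} By Theorem \ref{iso of Y m}, $\zeta^{(m)}$ intertwines the action of $ev_{n+m}\circ\psi_{m}(\Ym)$ on $M_{\lambda}^{n\text{-sing}}[\nu]$, restricted to $\glm$-singular vectors of weight $\mu$, with $\rho_{a,z,\lambda^{(m)}}(\Ym)$. Hence the image of $ev_{n+m}(\psi_m(D_m))$ in $\End(M_{\lambda}^{sing}[\nu,\mu])$ corresponds, via $\theta\circ\zeta^{(m)}$, to $D_{m}(a,z;b,\lambda^{(m)})$. The prefactor $ev_{n+m}(i_n(\qdet_n(u+n)))$ is handled as in the proof of Corollary \ref{duality}: it equals $\qdet_{n+m}(u+n)\,\psi_m(\qdet_m^{-1}(u))$. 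Central elements act on $M_\lambda$ by explicit rational functions of $u$, and $\qdet_m(u)$ acts on the highest weight module $S^{(m)}(a,z,\lambda^{(m)})$ by the scalar computed from the weight \eqref{weight1}. Combining these, the scalar prefactor in \eqref{Dbar01} becomes an explicit rational function $f_m(u)$. I expect $f_m(u)$, times the factor $(u+n)^{\underline{n+m}}$ and after the conjugation $\tau^n\cdot\tau^{-n}$ from \eqref{psi D}, to reduce to $u^{\underline{m}}\prod_i(u-z_i)$. Here the choice \eqref{z w} of $z_i$ should be exactly what is needed for the zeros and poles to cancel. The outcome is that the image of $\overline{D}_m$ equals $u^{\underline{m}}\prod_{i}(u-z_i)D_m(a,z;b,\lambda^{(m)})$ up to this shift conjugation.

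\textbf{Step 2 (the $n$-side).} Do the same using Theorem \ref{iso of Y n}, the twist $\vee$ (which turns $\delta_n$ into $\widehat\delta_n$ by \eqref{delta D}), and \eqref{phi D}. This identifies the image of $\overline{D}_n$ with $u^{\underline{n}}\prod_j(u-w_j)D_n(b,w;a,\lambda^{(n)})$, again up to the appropriate change of variable and shift.

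\textbf{Step 3 (conclusion).} Write $\overline{D}_n=\sum A^{(n)}_{ij}(u+m)^i((u-n+1)d)^j$ and $\overline{D}_m=\sum A^{(m)}_{ji}(u+n)^j((u-m+1)d)^i$ with $A^{(n)}_{ij}=A^{(m)}_{ji}\in\Unm$, which is possible by Theorem \ref{main1}. Define the generators in the statement as the images of these elements in $\End(S^{(m)}(a)[b])$ via $\widehat{\theta\circ\zeta^{(m)}}$, and in $\End(S^{(n)}(b)[a])$ via $\widehat{\theta\circ\zeta^{(n)}}$. The identity \eqref{duality explicit} is then tautological, because $\Psi_\lambda$ is exactly the composition of the chain \eqref{chain}. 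That they generate the Bethe algebras follows as in Corollary \ref{duality}: the leading coefficient is an invertible scalar function, and the central part acts by scalars.

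\textbf{Main obstacle.} The hard step is the bookkeeping in Steps 1 and 2. One must check that the scalar prefactors, coming from $\qdet_{n+m}$ on $M_\lambda$ and from $\qdet_m$, $\qdet_n$ on the highest weight modules, combine with the shifts $\tau^{\pm n}$ and the reflection $u\mapsto -u+l-1$ to give precisely $u^{\underline m}\prod(u-z_i)$ and $u^{\underline n}\prod(u-w_j)$ with the parametrization \eqref{z w}. I would first compute $\qdet_{n+m}(u)$ on $M_\lambda$ as $\prod_k(u+\lambda_k-k+1)/u^{\underline{n+m}}$-type factors, and $\qdet_m$ on \eqref{weight1}. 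Then I would verify the cancellation factor by factor, and adjust the normalization of $u$ if a global shift appears.
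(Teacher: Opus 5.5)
Your plan matches the paper's. You restrict $\overline{D}_m$ and $\overline{D}_n$ to $M_{\lambda}^{sing}[\nu,\mu]$ and transport them through $\zeta^{(m)}$ and $\zeta^{(n)}$, which come from Yangian-module isomorphisms. You then read off \eqref{duality explicit} tautologically from $A^{(n)}_{ij}=A^{(m)}_{ji}$ in Theorem \ref{main1}.

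However, the only step with real content is left as an expectation, and you frame it incorrectly. That step is showing that the scalar prefactor acts as exactly $u^{\underline{m}}\prod_i(u-z_i)$, respectively $u^{\underline{n}}\prod_j(u-w_j)$.

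No conjugation by $\tau^{n}$ enters. $\overline{D}_m$ is defined in \eqref{Dbar01} directly through $\psi_m(D_m)$; the conjugation in \eqref{psi D} lives only inside the proof of Theorem \ref{main1}. Had you conjugated, the scalar would become $(u+n)^{\underline{m}}\prod_i(u+n-z_i)$, and the first display of the theorem would fail.

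Nor can you ``adjust the normalization of $u$'' if a shift appears. The variables $(u+n)$ and $(u-m+1)d$ in the expansion are fixed by Theorem \ref{main1}.

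On the $n$-side there is no residual reflection either. By \eqref{delta D}, $ev_{n+m}\phi_n(\widehat{\delta}_n D_n)=ev_{n+m}\phi_n(\delta_n(D_n))$ is precisely how $D_n$ acts on $(M_\lambda^{m\text{-sing}}[\mu])^{\vee}$. So $\zeta^{(n)}$ carries it to $\rho_{b,w,\lambda^{(n)}}(D_n)$ on the nose.

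The missing computation is short, and the paper does it directly. $ev_{n+m}(i_n(\qdet_n(u+n)))$ is the $\gl_n$ quantum determinant. It acts on $\gl_n$-singular vectors of weight $\nu$ by $\prod_{k=1}^{n}\bigl(1+\nu_k/(u+n-k+1)\bigr)$. Since $(u+n)^{\underline{n+m}}=(u+n)^{\underline{n}}\,u^{\underline{m}}$, the prefactor becomes $u^{\underline{m}}\prod_{k}(u+n-k+1+\nu_k)$. This is $u^{\underline m}\prod_i(u-z_i)$ under the relation $z_i=-\nu_{n-i+1}-i$.

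Similarly, $(u+m)^{\underline{n+m}}\,ev_{n+m}(j_m(\qdet_m(-u-1)))$ acts on $\glm$-singular vectors of weight $\mu$ by $u^{\underline{n}}\prod_j(u+j-\mu_j)$. Using $\mu_j=\lambda_{n+j}+b_j$ and \eqref{z w}, this equals $u^{\underline n}\prod_j(u-w_j)$.

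Your detour through $\qdet_{n+m}(u+n)\,\psi_m(\qdet_m^{-1}(u))$ also works, and it avoids needing $\nu$ explicitly.
\begin{itemize}
\item[(i)] $\qdet_{n+m}(u+n)$ acts on $M_\lambda$ by $\prod_{k=1}^{n+m}\frac{u+n-k+1+\lambda_k}{u+n-k+1}$.
\item[(ii)] By Theorem \ref{iso of Y m} and \eqref{weight1}, $\psi_m(\qdet_m(u))$ acts by $\prod_{j=1}^{m}\frac{u-j+1+\lambda_{n+j}}{u-j+1}\prod_{i=1}^{n}\frac{u-z_i+a_i}{u-z_i}$.
\item[(iii)] The $\lambda_{n+j}$-factors cancel.
\item[(iv)] \eqref{z w} gives $u-z_i+a_i=u+\lambda_{n-i+1}+i$, which cancels the $k=n-i+1$ numerator from (i).
\end{itemize}
So $i_n(\qdet_n(u+n))$ acts by $\prod_i(u-z_i)/(u+n)^{\underline{n}}$, again with no shift.

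With this in place your Step 3 goes through. Generation is immediate, since the prefactor is a nonzero scalar polynomial in $u$.
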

\begin{proof}
    Using that $i_{n}(\qdet_{n}(u))=t^{(n+m)}_{\{1\lc n\}}(u)$, where $t^{(n+m)}_{\{1\lc n\}}(u)$ is given by the formula \eqref{qminor}, and that $t^{(n+m)}_{ii}(u)v=\nu_{i}(u)v$, $t^{(n+m)}_{kl}(u)v=0$ for all $v\in M_{\lambda}^{sing}[\mu,\nu]$, $i=1\lc n$, $1\leq k < l\leq n$, it is easy to check that $i_{n}(\qdet_{n}(u+n))$ acts on $M_{\lambda}^{sing}[\mu,\nu]$ as $\id \cdot\prod_{i=1}^{n}\left(1+\nu_{i}(u+n-i+1)^{-1}\right)$. Using the relation $z_{i}=-\nu_{n-i+1}-i$, we get that $(u+m)^{\underline{m+n}}\, i_{n}(\qdet_{n}(u+n))$ acts on $M_{\lambda}^{sing}[\mu,\nu]$ as $\id \cdot u^{\underline{n}}\,\prod_{i=1}^{n}(u-z_{i})$. Similarly, $(u+n)^{\underline{m+n}}\, j_{m}(\qdet_{m}(-u-1))$ acts on $M_{\lambda}^{sing}[\mu,\nu]$ as $\id \cdot u^{\underline{m}}\,\prod_{i=1}^{m}(u-w_{i})$. Therefore, using that $\zeta^{(m)}$ and $\zeta^{(n)}$ are restrictions of homomorphisms of Yangian modules, we deduce
    \[u^{\underline{m}}\prod_{i=1}^{n}(u-z_{i})\,D_{m}(a,z;b,\lambda^{(m)})=\widehat{\theta^{S^{(m)}(a)}_{\lambda^{(m)},b}}(\widehat{\zeta^{(m)}}(\rho_{\lambda}(\overline{D}_{m}))),\]
    \[u^{\underline{n}}\prod_{i=1}^{m}(u-w_{i})\,D_{n}(b,w;a,\lambda^{(n)})=\widehat{\theta^{S^{(n)}(b)}_{\lambda^{(n)},a}}(\widehat{\zeta^{(n)}}(\rho_{\lambda}(\overline{D}_{n}))),\]
    where $\overline{D}_{m}$ and $\overline{D}_{n}$ are defined in formulas \eqref{Dbar01} and \eqref{Dbar10}, respectively, and $\rho_{\lambda}: \Unm\rightarrow \End (M^{(n+m)}_{\lambda})$ is the homomorphism, corresponding to the action of $\Unm$ on the Verma module $M^{(n+m)}_{\lambda}$. Now Theorem \ref{duality rep} follows from Theorem \ref{main1}.

\end{proof}

\section{Degeneration to the duality between the XXX-type spin chain and the trigonometric Gaudin model}\label{degeneration}
Let $a$, $b$, $z$, $w$, $\lambda$, $\lambda^{(n)}$, and $\lambda^{(m)}$ be as introduced in the previous section. 

\subsection{The Bethe algebra of the quantum XXX-type spin chain}\label{5.1} Consider the following tensor product of evaluation $\Ym$-modules
\begin{equation}\label{S(a,z)} 
S^{(m)}(a,z)=S^{a_{1}}\C^{m}(z_{1})\otimes S^{a_{2}}\C^{m}(z_{2})\otimes\dots\otimes S^{a_{n}}\C^{m}(z_{n}). 
\end{equation}

Recall that in Section \ref{Bethe subalgebras in Yangian}, for a sequence of complex numbers $C=(c_{1}\lc c_{m})$, we introduced the Bethe algebra $\mathcal{B}_{m}(C)\subset \Ym$ together with its generating difference operator $D_{m}(C)$. As a subalgebra of $\Ym$, the Bethe algebra $\mathcal{B}_{m}(C)$ acts on $S^{(m)}(a,z)$. It is known (see e.g. \cite[Proposition 4.7]{MTV6}) that $\mathcal{B}_{m}(C)$ commutes with elements $t_{ii}^{(1)}$, $i=1\lc m$, therefore, $\mathcal{B}_{m}(C)$ acts on $S^{(m)}(a)[b]\subset S^{(m)}(a,z)$. Let $\rho_{a,z}:\mathcal{B}_{m}(C)\rightarrow \End (S^{(m)}(a)[b])$ be the corresponding homomorphism. 

Suppose that $C=(\xi_{1}^{-1}\lc\xi_{m}^{-1})$ for some nonzero $\xi_{i}$, $i=1\lc m$. Denote $\xi=(\xi_{1}\lc\xi_{m})$ and introduce the difference operator $D_{m}^{XXX}(a,\lambda^{(n)};b,\xi)=\left(\prod_{i=1}^{m}\xi_{i}\right)\rho_{a,z}(D_{m}(C))$. The dependence on $\lambda^{(n)}$ indicated in the last formula is through relation between $z$ and $\lambda^{(n)}$ given in \eqref{z w}.

By construction, the difference operator $D_{m}^{XXX}(a,\lambda^{(n)};b,\xi)$ is of the form

\[D_{m}^{XXX}(a,\lambda^{(n)};b,\xi)=\sum_{i=0}^{m}\left(\sum_{j=0}^{\infty}B^{XXX}_{ij}u^{-j}\right)\tau^{m-i}\]
for some commuting linear operators $B^{XXX}_{ij}\in\End (S^{(m)}(a)[b])$, which depend polynomially on $\lambda^{(n)}$ and $\xi$. We, therefore, define $B^{XXX}_{ij}$ for all $\lambda^{(n)}$ and $\xi$ by continuity, and denote the algebra generated by $B^{XXX}_{ij}$, $i=1\lc n$, $j\in \Z_{\geq 0}$ by $\mathcal{B}_{m}^{XXX}(a,\lambda^{(n)};b,\xi)$. The algebra $\mathcal{B}_{m}^{XXX}(a,\lambda^{(n)};b,\xi)$ is known as the algebra of higher Hamiltonians (the Bethe algebra) of the (quasi-periodic) spin chain of XXX-type (see \cite{KS}, \cite{MTV6}).

\subsection{The Bethe algebra of the quantum trigonometric Gaudin model}\label{trigonometric Gaudin}
In Section \ref{Bethe subalgebras in Yangian}, for any associative algebra $\mathcal{A}$, we introduced the algebra $D(\mathcal{A})$ of difference operators with coefficients in $\mathcal{A}((u^{-1}))$. We will now consider the algebra $\mathfrak{D}(\mathcal{A})$ of \textit{differential} operators with coefficients in $\mathcal{A}((u^{-1}))$. That is, an element of $\mathfrak{D}(\mathcal{A})$ is a differential operator of the form $\sum_{i=0}^{l} f_{i}(u)\partial_{u}^{i}$ for some $l$, where $f_{i}(u)\in\mathcal{A}((u^{-1}))$, $i=1\lc l$ and $\partial_{u}=\frac{d}{du}$.

Consider the current Lie algebra $\gln [t] = \gln \otimes \C [t]$ with the Lie bracket given by
\[ [g\otimes t^{i}, h\otimes t^{j}]= [g,h]\otimes t^{i+j}, \quad g,h\in\gln .  \]
For any $i,j=1\lc n$, introduce the elements $e_{ij}(u)\in U(\gln [t])((u^{-1})$:
\[e_{ij}(u)=\sum_{s=0}^{\infty}(e_{ij}\otimes t^{s}) u^{-s-1}.\]

Define
$\mathfrak{D}_{n}\in \mathfrak{D}(U(\gln [t]))$ as follows:

\begin{equation}\label{mathfrak D}
\mathfrak{D}_{n}= \sum_{\sigma\in S_{n}}(-1)^{\sigma}\mathfrak{D}_{\sigma (1),1}\mathfrak{D}_{\sigma (2), 2}\dots \mathfrak{D}_{\sigma (n), n},
\end{equation}
where
\[
\mathfrak{D}_{ij} = \delta_{ij}\partial_{u}-e_{ij}(u).
\]
Let $\mathfrak{B}_{ij}\in U(\gln [t])$, $i=0\lc n$, $j\in\Z_{\geq 0}$ be the coefficients of $\mathfrak{D}_{n}$:
\begin{equation}
\mathfrak{D}_{n}=\sum_{i=0}^{n}\left(\sum_{j=0}^{\infty}\mathfrak{B}_{ij}u^{-j}\right)\partial_{u}^{n-i}.
\end{equation}

Define the universal Gaudin Bethe algebra $\mathfrak{B}_{n}$ to be the unital subalgebra of $U(\gln [t])$ generated by $\mathfrak{B}_{ij}\in U(\gln [t])$, $i=0\lc n$, $j\in\Z_{\geq 0}$. It is known, see \cite{Tal},\cite{MTV6}, that $\mathfrak{B}_{n}$ is commutative and belongs to the centralizer of $\gln\otimes 1\in U(\gln [t])$. 

We have homomorphisms $\widetilde{ev}_{n}: U(\gln [t])\rightarrow \Un$ and $\widetilde{sh}_{w}: U(\gln [t])\rightarrow U(\gln [t])$, where $w\in\C$, given by $\widetilde{ev}_{n}(e_{ij}(u))=e_{ij}\cdot u^{-1}$, $\widetilde{sh}_{w}(e_{ij}(u))=e_{ij}(u-w)$. If $V$ is a $\Un$-module, let $V(w)$ be the $U(\gln [t])$-module obtained by pulling back $V$ via $\widetilde{sh}_{w}\circ\widetilde{ev}_{n}$. The module $V(w)$ is called an evaluation module with evaluation parameter $w$. 

Consider the following tensor product of evaluation $U(\gln [t])$-modules:
\begin{equation}\label{S_U}
S_{U}^{(n)}(b,\xi,\lambda^{(n)})=S^{b_{1}}\C^{n}(\xi_{1})\otimes S^{b_{2}}\C^{n}(\xi_{2})\otimes\dots\otimes S^{b_{m}}\C^{n}(\xi_{m})\otimes M^{(n)}_{\lambda^{(n)}}(0). 
\end{equation}
As a subalgebra of $U(\gln [t])$, the universal Gaudin Bethe algebra $\mathfrak{B}_{n}$ acts on $S_{U}^{(n)}(b,\xi,\lambda^{(n)})$. Since $\mathfrak{B}_{n}$ commutes with $\gln\otimes 1\in U(\gln [t])$, it acts on $(S^{(n)}(b)\otimes M^{(n)}_{\lambda^{(n)}})^{sing}[\lambda^{(n)}+a]\subset S_{U}^{(n)}(b,\xi,\lambda^{(n)})$. Let $\rho^{G}_{b,\,\xi,\,\lambda^{(n)}}:\mathfrak{B}_{n}\rightarrow \End ((S^{(n)}(b)\otimes M^{(n)}_{\lambda^{(n)}})^{sing}[\lambda^{(n)}+a])$ be the corresponding homomorphism. 

Introduce the differential operator \[\mathfrak{D}_{n}^{tG}(b,\xi;a,\lambda^{(n)})=\widehat{\theta^{S^{(n)}(b)}_{\lambda^{(n)},a}}\left(\rho^{G}_{b,\,\xi,\,\lambda^{(n)}}(\mathfrak{D}_{n})\right)\in\mathfrak{D}(\End (S^{(n)}(b)[a])).\] 
By construction, the differential operator $\mathfrak{D}_{n}^{tG}(b,\xi;a,\lambda^{(n)})$ is of the form

\[\mathfrak{D}_{n}^{tG}(b,\xi;a,\lambda^{(n)})=\sum_{i=0}^{n}\left(\sum_{j=0}^{\infty}B^{tG}_{ij}u^{-j}\right)\partial_{u}^{n-i}\]
for some commuting linear operators $B^{tG}_{ij}\in\End (S^{(n)}(b)[a])$. The subalgebra $\mathcal{B}_{n}^{tG}(b,\xi;a,\lambda^{(n)})$ of $\End (S^{(n)}(b)[a])$ generated by $B^{tG}_{ij}$, $i=1\lc n$, $j\in \Z_{\geq 0}$ is called the Bethe algebra of the trigonometric Gaudin model, see \cite{IKR}, \cite{MR}.

\subsection{The duality}
Let $\{e^{(n)}_{i},\, i=1\lc n\}$ be the standard basis of $\C^{n}$, that is, $e_{i}^{(n)}=(0\lc 0,1,0\lc 0)$, where $1$ is on the $i$-th place. Let $\{e^{(m)}_{i},\, i=1\lc m\}$ be the similar basis of $\C^{m}$. For any $n\times m$ matrix $K=(k_{ij})_{\substack{i=1\lc n \\ j=1\lc m}}$ with non-negative integer entries, define vectors $v^{(n)}_{K}\in (S\C^{(n)})^{\otimes m}$ and $v^{(m)}_{K}\in (S\C^{(m)})^{\otimes n}$ as follows:
\[v_{K}^{(n)}=\prod_{i=1}^{n}(e_{i}^{(n)})^{k_{i1}}\otimes \prod_{i=1}^{n}(e_{i}^{(n)})^{k_{i2}}\otimes\dots\otimes \prod_{i=1}^{n}(e_{i}^{(n)})^{k_{im}},\]
\[v_{K}^{(m)}=\prod_{i=1}^{m}(e_{i}^{(m)})^{k_{1i}}\otimes \prod_{i=1}^{m}(e_{i}^{(m)})^{k_{2i}}\otimes\dots\otimes \prod_{i=1}^{m}(e_{i}^{(m)})^{k_{ni}}.\]

Consider the following linear isomorphism
\begin{equation}\label{Psi}
    \begin{split}
        \Psi: (S\C^{(m)})^{\otimes n} & \rightarrow (S\C^{(m)})^{\otimes n}, \\
        v_{K}^{(m)} & \mapsto v_{K}^{(n)}.
    \end{split}
\end{equation}
Notice that 
\[S^{(m)}(a)[b]=\{v_{K}^{(m)}\,\vert\, K=(k_{ij})_{\substack{i=1\lc n \\ j=1\lc m}},\, \sum_{j}k_{ij}=a_{i},\, \sum_{i}k_{ij}=b_{j}\},\]
\[S^{(n)}(b)[a]=\{v_{K}^{(n)}\,\vert\, K=(k_{ij})_{\substack{i=1\lc n \\ j=1\lc m}},\, \sum_{j}k_{ij}=a_{i},\, \sum_{i}k_{ij}=b_{j}\}.\]
Therefore, $\Psi (S^{(m)}(a)[b])=S^{(n)}(b)[a]$, and we have an isomorphism $\widehat{\Psi}: \End (S^{(m)}(a)[b]) \rightarrow \End (S^{(n)}(b)[a])$. The next theorem is the main result of Section \ref{degeneration}.

\begin{thm}\label{main3}
Denote $\boldsymbol{n}=(n,n\lc n)\in\C^{n}$. For any $\lambda^{(n)}\in\C^{n}$ and $\xi\in\C^{m}$, there exists generators $\widetilde{A}^{(m)}_{ij}$, $i=1\lc n$, $j=1\lc m$, of $\mathcal{B}_{m}^{XXX}(a,\lambda^{(n)}-\boldsymbol{n};b,\xi)\subset \End (S^{(m)}(a)[b])$ and generators $\widetilde{A}^{(n)}_{ji}$, $i=1\lc n$, $j=1\lc m$, of $\mathcal{B}_{n}^{tG}(b,\xi;a,\lambda^{(n)})$ such that
\begin{equation}\label{coef A tilde m}
\prod_{i=1}^{n}(u-z_{i}-n)\,\,D_{m}^{XXX}(a,\lambda^{(n)}-\boldsymbol{n};b,\xi)=\sum_{i=0}^{n}\sum_{j=0}^{m}\widetilde{A}^{(m)}_{ij}u^{i}\tau^{j},
\end{equation}
\begin{equation}\label{coef A tilde n}
u^{n}\prod_{i=1}^{m}(u-\xi_{j})\,\,\mathfrak{D}_{n}^{tG}(b,\xi;a,\lambda^{(n)})=\sum_{i=0}^{n}\sum_{j=0}^{m}\widetilde{A}^{(n)}_{ji}u^{j}(u\partial_{u})^{i},
\end{equation}
and for each $i=0\lc n$, $j=0\lc m$, we have
\[\widehat{\Psi}(\widetilde{A}^{(m)}_{ij})=\widetilde{A}^{(n)}_{ji}.\]
\end{thm}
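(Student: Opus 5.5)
We will deduce Theorem \ref{main3} from Theorem \ref{duality rep} by the limit procedure described in the introduction. Set $\lambda_{n+j}=r\xi_j$ (more precisely, we choose the $\glm$-weight so that $w_j=\lambda_{n+j}+b_j-j$ is a fixed affine function of $r\xi_j$), keep $\lambda^{(n)}$ (hence $z$) fixed, and let $r\to\infty$. Note that $z$ depends only on $\lambda^{(n)}$ and $a$; the shift $\lambda^{(n)}\mapsto\lambda^{(n)}-\boldsymbol n$ in the statement accounts for the shifts $u\mapsto u\pm n$ between $(u+n)^i$ and $u^i$ in Theorem \ref{duality rep}.

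\emph{The $\glm$ side.} Here $S^{(m)}(a,z,\lambda^{(m)})$ contains the Verma factor $M_{\lambda^{(m)}}$, whose evaluation parameter is $0$ and whose weight tends to infinity. After transport by $\theta$, the operator $D_m(a,z;b,\lambda^{(m)})$ on $S^{(m)}(a)[b]$ takes the form $\rho_{a,z}$ applied to a product of the Yangian $T$-matrix with the matrix $1+u^{-1}E(\lambda^{(m)})$ plus lower order corrections. In the limit $r\to\infty$, the diagonal part $\lambda_{n+j}/u$ dominates. We will rescale $\tau$ to absorb the factors $r$, multiply by $\prod\xi_j$ and suitable powers of $r$, and use $\mathcal B_m(aC)=\mathcal B_m(C)$ from the Remark. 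The limit should then be $\rho_{a,z}(D_m(C))$ with $C=(\xi_j^{-1})$, which is $D^{XXX}_m$. Correspondingly, the coefficients $A^{(m)}_{ij}$, suitably normalized by powers of $r$, converge to $\widetilde A^{(m)}_{ij}$. Since $(u-m+1)d$ must converge to $\tau$ after rescaling, the right hand side becomes $\sum\widetilde A^{(m)}_{ij}u^i\tau^j$. Generation is preserved because the normalization is triangular.

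\emph{The $\gln$ side.} We substitute $u\mapsto ru$ and $\tau\mapsto \exp(r^{-1}\partial_u)$. The evaluation parameters are $w_j\sim r\xi_j$, so $t_{ij}(ru-c)$ acting on $S^{(n)}(b,w,\lambda^{(n)})$ expands as $1+r^{-1}\bigl(\sum_j e^{(j)}_{ij}/(u-\xi_j)+e^{M}_{ij}/u\bigr)+O(r^{-2})$. The difference determinant of order $n$ then degenerates, at leading order $r^{-n}$, to $\mathfrak D_n$ evaluated on $S_U^{(n)}(b,\xi,\lambda^{(n)})$, and $\theta$ commutes with this limit. We also have $(ru-n+1)d\to u\partial_u$ and $(ru+m)\sim ru$. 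After normalizing $A^{(n)}_{ji}$ by $r^{-j}$, they converge to $\widetilde A^{(n)}_{ji}$ satisfying \eqref{coef A tilde n}. The powers of $r$ match those on the $\glm$ side, because the duality swaps $u$ with $(u-m+1)d$.

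\emph{The intertwiner, which is the main obstacle.} We have $\widehat\Psi_\lambda(A^{(m)}_{ij})=A^{(n)}_{ji}$ for each $r$. However, $\Psi_\lambda$ may diverge or degenerate as $r\to\infty$. We will therefore replace it by $g_r\Psi_\lambda h_r$, with $g_r,h_r$ diagonal rescalings in the bases $v_K$ chosen to make the limit $\Psi_\infty$ exist and be invertible. Such rescalings commute with weight decompositions but not with the algebras, so instead we argue as follows. For generic $\lambda^{(n)},\xi$, the XXX dynamical/Bethe algebra and the trigonometric Gaudin algebra act with simple spectrum. The relation $\widehat{\Psi}_\lambda(A^{(m)}_{ij})=A^{(n)}_{ji}$, which holds for each $r$, passes to the limit along any subsequence where the normalized $\Psi_\lambda$ converges to an invertible $\Psi_\infty$, for instance after normalizing to unit determinant and using compactness. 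This gives $\widehat\Psi_\infty(\widetilde A^{(m)}_{ij})=\widetilde A^{(n)}_{ji}$.

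It then remains to compare $\Psi_\infty$ with $\Psi$. By simplicity of spectrum, any two intertwiners differ by an operator diagonal in the joint eigenbasis. So we must show that $\Psi$ itself intertwines, e.g.\ by checking the identity on the lowest order generators (the Tarasov–Varchenko relation between trigonometric Gaudin and XXX dynamical Hamiltonians \cite{TV4}, realized in $S(\C^n\otimes\C^m)$). Since the algebras are generated by all coefficients, we would rather verify directly that $\widehat\Psi(\widetilde A^{(m)}_{ij})$ satisfies the same characterization. Our first attempt will be to note that both sides are expressed through the $\glnm$-action on $S(\C^n\otimes\C^m)$ in the Howe-dual form, and that the $r\to\infty$ limit of $\Psi_\lambda$ is expected to be $\Psi$ up to a scalar on each piece. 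Finally, the statement for non-generic $\lambda^{(n)},\xi$ follows by continuity, since both sides are polynomial in the parameters.
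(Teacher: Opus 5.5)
Your degenerations of the two generating operators are essentially the paper's Propositions \ref{tG} and \ref{XXX}. The gap is in how you handle the intertwiner, and it has two parts.

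First, the step that produces an invertible limit fails as written. Normalizing $\Psi_\lambda$ to determinant one gives no compactness (think of $\diag(r,r^{-1})$). Normalizing by a norm may give a singular limit, and then $\Psi_\infty\widetilde{A}^{(m)}_{ij}=\widetilde{A}^{(n)}_{ji}\Psi_\infty$ says nothing about conjugacy. The freedom you need is one you mention afterwards but do not use. Composing $\Psi_\lambda$ with an operator that is diagonal in the joint eigenbasis of $\overline{\mathcal{B}}_m(a,z;b,\lambda^{(m)})$ (not in the basis $v_K$) does not change $\widehat{\Psi}_\lambda$ on that algebra. The paper exploits exactly this:
\begin{itemize}
\item Lemma \ref{diagonalization}, deduced from simple spectrum of the trigonometric Gaudin limit \cite{IKR}, shows that these algebras are diagonalizable with simple spectrum over the Puiseux field $\mathbb{P}_r$.
\item It rescales the $i$-th eigenvector by $r^{-d_i}$ to obtain a degree-preserving $\Psi_{(\lambda)}$.
\item Lemmas \ref{lemma2} and \ref{lemma3} then give an invertible $\Psi^{gr}_{(\lambda)}$ with $\widehat{\Psi^{gr}_{(\lambda)}}(gr_{m-i}D_{ij})=gr_{m-i}C_{ij}$.
\end{itemize}
Alternatively, you could compare joint spectra. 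They agree for every large $r$, hence in the limit, and two commuting families with the same joint spectrum of $N$ distinct characters on an $N$-dimensional space are conjugate.

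Second, and more seriously, you never carry out the comparison with $\Psi$. Showing that $\Psi$ itself intertwines the full algebras is the theorem. The expectation that $\lim\Psi_\lambda$ equals $\Psi$ up to scalars is not proved. What you actually need is that $\Psi$ and the limit intertwiner agree on some subfamily with simple joint spectrum. The natural candidates are the ones you set aside: the XXX dynamical Hamiltonians $\overline{G}^{(m)}_i(z,\xi)$ and the trigonometric Gaudin Hamiltonians $\overline{H}^{(n)}_i(z,\xi)$. The Tarasov--Varchenko identity gives $\widehat{\Psi}(\overline{G}^{(m)}_i)=\overline{H}^{(n)}_i$. The missing ingredient is the same statement for the limit intertwiner. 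That requires writing both families of Hamiltonians by one and the same formula in the degenerate coefficients. The paper does this in Theorem \ref{Hamiltonians through coefficients}: the formula is $\frac{1}{\xi_i}\Res_{u=\xi_i}\bigl(\frac{1}{2}C_1^2(u)-C_2(u)\bigr)-\frac{1}{2}b_i^2$ and its analogue with $D_1,D_2$, obtained via \cite[Prop.~B.1]{MTV6} on one side and a direct computation on the other. The $\overline{H}^{(n)}_i$ have simple joint spectrum when $z_1,\dots,z_n$ are large and linearly independent over $\Z$. Hence $\Psi^{-1}\Psi^{gr}_{(\lambda)}$, which commutes with the $\overline{G}^{(m)}_i$, is diagonal in their common eigenbasis. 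That basis is also an eigenbasis of the XXX Bethe algebra, since the $\overline{G}^{(m)}_i$ lie in it. This gives $\widehat{\Psi}=\widehat{\Psi^{gr}_{(\lambda)}}$ on that algebra (Lemma \ref{eigenvectors match}, Proposition \ref{Psi match}). Polynomial dependence on $(\xi,\lambda^{(n)})$ then removes the genericity assumptions, as you indicate.
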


The rest of the Section \ref{degeneration} will be devoted to the proof of Theorem \ref{main3}. The main idea of the proof is to consider Theorem \ref{duality rep} with $\lambda^{(m)}=r\xi$, $r\in\C$, in the limit $r\to\infty$.

\subsection{Filtration and grading}
Suppose that $I$ is a totally ordered set. Let $V$ be an $I$-filtered vector space, that is $V=\sum_{i\in I}\mathcal{F}_{i}$, where $\mathcal{F}_{i}$, $i\in I$, are vector subspaces of $V$ called filtration components such that for any $i,j\in I$, $i\leq j$, we have $\mathcal{F}_{i}\subset \mathcal{F}_{j}$. Then we denote by $gr \,V$ the associated graded vector space of $V$, that is, $gr\, V=\bigoplus_{i\in I}\bigl(\mathcal{F}_{i}/\bigl(\sum_{j<i}\mathcal{F}_{j}\bigr)\bigr)$. For any $i\in I$, let $gr_{i}: \mathcal{F}_{i}\rightarrow \mathcal{F}_{i}/\bigl(\sum_{j<i}\mathcal{F}_{j}\bigr)$ be the canonical projection. If $v\in V$, define the degree $\deg (v)$ of $v$ as the element $i\in I$ such that $v\in\mathcal{F}_{i}$ and $v\notin\mathcal{F}_{j}$ for $j<i$. Denote $gr\, v = gr_{\deg (v)} v$.

Now, suppose that $I$ is a totally ordered abelian group, and let $\mathcal{A}$ be an associative algebra. We say that $\mathcal{A}$ is an $I$-filtered algebra (or on $\mathcal{A}$ we have a filtration) if $\mathcal{A}$ is a $\Z$-filtered vector space with filtration components $\mathcal{F}_{i}$, $i\in I$ such that for any $f\in\mathcal{F}_{i}$, $h\in\mathcal{F}_{j}$, we have $fh\in \mathcal{F}_{i+j}$. If $\mathcal{A}$ is an $I$-filtered algebra, then $gr\, \mathcal{A}$ is naturally an associative algebra (called the associated graded algebra of $\mathcal{A}$) with multiplication given as follows: for any $i,j\in I$, $f\in\mathcal{F}_{i}$, $h\in\mathcal{F}_{j}$, we have $gr_{i}(f)\cdot gr_{j}(h)= gr_{i+j}(fh)$.

Almost everywhere below, we will have $I=\Z$. The only exception is Subsection \ref{4.7.2}, where we will need $I=\mathbb{Q}$. 
\subsection{Degeneration to trigonometric Gaudin model}\label{5.5}
Recall the evaluation homomorphism $ev_{n,c}:\Yn\rightarrow \Un$, where $c\in\C$:
\[ev_{n,c}: t_{ij}(u)\mapsto\delta_{ij}+\frac{e_{ij}}{u-c}.\]
We can think of $c$ as variable, then $\frac{1}{u-c}$ becomes an element of $(\C[c])((u^{-1}))$, and $ev_{n,c}$ becomes a map  
$\Yn\rightarrow \Un\otimes\C[c]$.

Therefore, in the construction of the algebra $\overline{\mathcal{B}}_{n}(b,w;a,\lambda^{(n)})$, we can consider $\lambda^{(m)}=(\lambda_{n+1}\lc\lambda_{n+m})$ as a set of algebraically independent commuting variables. Namely, denoting the field of of rational functions of $\lambda^{(m)}$ as $\C(\lambda^{(m)})$, let us think of $\overline{\mathcal{B}}_{n}(b,w;a,\lambda^{(n)})$ as a subalgebra of $\End(S^{(n)}(b)[a])\otimes\C(\lambda^{(m)})$ generated by the image of the Bethe algebra $\mathcal{B}_{n}\subset\Yn$ and $\C(\lambda^{(m)})$. 

In a similar way, we can consider the parameters $\xi=(\xi_{1}\lc\xi_{m})$ introduced in Section \ref{5.1} as variables. Let $\C(r, \xi)$ be the algebra of rational functions in variables $\xi_{1}\lc\xi_{m}$ and $r$. Any polynomial in $r$ and $\xi$ is invertible in $\C(\xi)((r^{-1}))$, so we can consider $\C(r, \xi)$ as a subalgebra in $\C(\xi)((r^{-1}))$. If $f=\sum_{i=0}^{\infty}f_{i}r^{k-i}\in\C(\xi)((r^{-1}))$, define the degree $\deg (f)$ of $f$ by $\deg (f) = k$. Consider the homomorphism of algebras $\pi_{r,\xi}:\C(\lambda^{(m)})\rightarrow\C(r, \xi)$ given by $\pi_{r,\xi}(\lambda_{n+i})=r\xi_{i}$, $i=1\lc n$. Then for any $p\in\C(\lambda^{(m)})$, define the degree $\deg (p)$ of $p$ by $\deg (p)=\deg (\pi_{r,\xi}(p))$. For any vector space $V$, introduce a filtration on $V\otimes\C(\lambda^{(m)})$ by defining the filtration components $\mathcal{F}_{i}$, $i\in\Z$ as follows:
\begin{equation}\label{filtr lambda}
    \mathcal{F}_{i}= span_{\C}\{v\otimes p\,\vert\,\deg (p)\leq i\}.
\end{equation}
If $f=\sum_{i}v_{i}\otimes p_{i}\in V\otimes\C(\lambda^{(m)})$ is of degree $k$, and we have $\pi_{r,\xi} (p_{i})=\sum_{j=0}^{\infty}p_{ij}(\xi)r^{k-j}$, let us identify $gr\, (f)$ and $\sum_{i}v_{i}\otimes p_{i0}(\xi)$, that is, we look at  $gr\, (V\otimes \C(\lambda^{(m)})) $ as a subspace of $V\otimes \C(\xi)$.

For any complex number $c$, one can easily prove by induction on $i$ the following relation: 
\begin{equation}\label{rel 4.1}
(u-c)^{\overline{i}}d^{i}=\prod_{j=1}^{i}((u-c)d-j+1).
\end{equation}

It follows from Theorem \ref{duality rep} and formula \eqref{rel 4.1} that we can find elements $C_{ij}\in \overline{\mathcal{B}}_{n}(b,w;a,\lambda^{(n)})$, $i=0\lc m$, $j=0\lc n$ such that 
\begin{equation}\label{C ij}
u^{\underline{n}}\prod_{i=1}^{m}(u-w_{i})\,\,D_{n}(b,w;a,\lambda^{(n)})=\sum_{i=0}^{m}\sum_{j=0}^{n}C_{ij}(u+m)^{i}(u-n+1)^{\overline{j}}d^{j}.
\end{equation}

\begin{prop}\label{tG}
For each $i=0\lc m$, $j=0\lc n$, we have $\deg (C_{ij})\leq m-i$, and the following holds:
\begin{equation}\label{Prop 4.2}
\sum_{i=0}^{m}\sum_{j=0}^{n} \, gr_{m-i}\, (C_{ij})\,u^{i+j}\partial_{u}^{j}=u^{n}\prod_{i=1}^{m}(u-\xi_{i})\,\,\mathfrak{D}_{n}^{tG}(b,\xi;a,\lambda^{(n)}).
\end{equation}
\end{prop}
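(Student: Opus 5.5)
We substitute $\lambda^{(m)}=r\xi$, so that $w_j=r\xi_j+b_j-j$, rescale $u\mapsto ru$, and write $\tau=\exp(r^{-1}\partial_u)$. In these variables $d=r^{-1}\partial_u+O(r^{-2})$ and $(u-n+1)^{\overline{j}}d^{j}=r^{j}u^{j}r^{-j}\partial_u^{j}+\dots=u^j\partial_u^j+O(r^{-1})$. Thus the $(u+m)^i$ factor in \eqref{C ij} carries exactly $r^i$. Dividing the whole identity \eqref{C ij} by $r^{n+m}$ and comparing degrees term by term, the claim becomes the following. The operator $r^{-n-m}(ru)^{\underline n}\prod_i(ru-w_i)\,D_n(b,w;a,\lambda^{(n)})|_{u\to ru}$ must have a finite limit as $r\to\infty$, and this limit must equal $u^n\prod(u-\xi_i)\mathfrak D^{tG}_n$. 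The degree bound $\deg C_{ij}\le m-i$ and formula \eqref{Prop 4.2} then follow by expanding the left side of \eqref{C ij} in the basis $u^{i}(u-n+1)^{\overline j}d^j$. This basis is triangular, and the change of basis preserves the leading $r$-order, so the leading coefficients $gr_{m-i}C_{ij}$ are read off from the limit.

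To compute the limit, we unpack $D_n(b,w;a,\lambda^{(n)})$ through $\theta$ and $\rho_{b,w,\lambda^{(n)}}$. Here $\rho$ is the tensor product of evaluation modules $S^{b_j}\C^n(w_j)$ with $M_{\lambda^{(n)}}(0)$. Under the coproduct $t_{ij}(u)\mapsto\sum_k t_{ik}(u)\otimes t_{kj}(u)$, the factor $S^{b_j}\C^n(w_j)$ contributes $\delta+e^{(j)}/(u-w_j)$. After $u\to ru$, $w_j\sim r\xi_j$, this becomes $\delta+r^{-1}e^{(j)}/(u-\xi_j)+O(r^{-2})$. The Verma factor contributes $\delta+e^{(0)}/(ru)$.

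Multiplying out $D_{ij}=\delta_{ij}\tau-t_{ij}(ru-j+1)$ gives $D_{ij}=\delta_{ij}(1+r^{-1}\partial_u)-\delta_{ij}-r^{-1}\bigl(\sum_k e^{(k)}_{ij}/(u-\xi_k)+e^{(0)}_{ij}/u\bigr)+O(r^{-2})$. This is $r^{-1}\bigl(\delta_{ij}\partial_u-e_{ij}(u)\bigr)+O(r^{-2})$, where $e_{ij}(u)$ is the Gaudin current acting on $S_U^{(n)}(b,\xi,\lambda^{(n)})$, with evaluation points $\xi_k$ and $0$. Hence the column determinant satisfies $D_n=r^{-n}\rho^G(\mathfrak D_n)+O(r^{-n-1})$. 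The scalar prefactor is $(ru)^{\underline n}\prod(ru-w_i)=r^{n+m}u^n\prod(u-\xi_i)(1+O(r^{-1}))$. So the total has the required limit, provided that conjugation by $\theta$ commutes with taking the limit.

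I expect the main obstacle to be this last step, namely the uniformity in $r$ of the products and of the $\theta$-conjugation. The Yangian module depends on $r$ only through the $w_j$, and $\theta^{S^{(n)}(b)}_{\lambda^{(n)},a}$ depends only on $\lambda^{(n)}$, which is fixed. The singular subspace of $S^{(n)}(b)\otimes M_{\lambda^{(n)}}$ is also $r$-independent. So $\theta$ is the same map for the XXX-type and the Gaudin constructions, and the limit passes through the conjugation.

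One subtlety remains. The products of the $D_{ij}$ involve shifts $u-j+1$ and noncommutation of $\tau$ with the coefficients. After rescaling, these give only $O(r^{-1})$ relative corrections, because $[\tau,f(ru)]=O(r^{-1})$ at the relevant order. Each $D_{ij}$ is of order $r^{-1}$, so these corrections enter only at order $r^{-n-1}$ in the $n$-fold product. Checking this bookkeeping, together with the claim that all coefficients are polynomial in $r$ so that no negative powers arise, is the routine but delicate part. Finally, we identify the Gaudin algebra acting on the singular subspace with $\mathfrak D_n^{tG}$ via the same $\theta$, exactly as in its definition.
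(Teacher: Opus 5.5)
Your proposal is correct and is essentially the paper's proof. The rescaling $u\mapsto ru$, $\tau=\exp(r^{-1}\partial_u)$ is exactly the paper's filtration on difference operators with $\deg u=1$, $\deg d=-1$. Your direct expansion $D_{ij}=r^{-1}(\delta_{ij}\partial_u-e_{ij}(u))+O(r^{-2})$ on the tensor product of evaluation modules, together with your observation that $\theta$ does not depend on $r$, replaces the paper's appeal to Theorem 10.2 of \cite{MTV6} ($\deg D_n=-n$, $gr\,D_n=\mathfrak{D}_n$) plus the compatibility of $ev_{n,c}$ with the Yangian filtration.

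One bookkeeping slip needs fixing. The prefactor is of order $r^{n+m}$ and $D_n$ is of order $r^{-n}$, so both sides of \eqref{C ij} are of order $r^{m}$. You must therefore divide by $r^{m}$, not $r^{n+m}$; dividing by $r^{n+m}$ makes the limit $0$. With the correct normalization you get the finite limit $u^n\prod_{i}(u-\xi_i)\mathfrak{D}_n^{tG}$ and the bound $\deg C_{ij}\le m-i$.
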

\begin{proof}
    If $\mathcal{A}$ is a $\Z$-filtered algebra with filtration components $\mathcal{F}_{i}$, $i\in\Z$, let us define a filtration on the algebra $D(\mathcal{A})$ of difference operators with coefficients in $\mathcal{A}((u^{-1}))$ as follows. Consider elements of $D(\mathcal{A})$ as series in $u^{-1}$ and $\partial_{u}$ by identifying $\tau$ and $e^{\partial_{u}}=\sum_{k=0}^{\infty}\frac{\partial_{u}^{k}}{k!}$. Then the filtration components $\mathcal{F}^{D}_{i}$, $i\in\Z$ of $D(\mathcal{A})$ are given by
    \[\mathcal{F}^{D}_{i}=\{\sum_{k,l}a_{kl}u^{-k}\partial_{u}^{l}\,\vert\,a_{kl}\in\mathcal{F}_{i+k+l}\text{ for all }k,l\},\]
    in particular, $\deg\, u=1$ and $\deg\, d = - 1$ Also, we have $gr (D(\mathcal{A}))\subset\mathfrak{D}(gr\, A)$, where $\mathfrak{D}(gr\, A)$ is the algebra of differential operators with coefficients in $(gr\, A)((u^{-1}))$, see Section \ref{trigonometric Gaudin} for notation.

    Introduce a filtration on $\Yn$ by defining the filtration components $\mathcal{F}_{i}^{Y}\subset \Yn$, $i\in\Z$, as follows
    \[\mathcal{F}^{Y}_{i}=span_{\C}\{t_{i_{1}j_{1}}^{(r_{1})}t_{i_{2}j_{2}}^{(r_{2})}\dots t_{i_{k}j_{k}}^{(r_{k})}\,\vert\, \sum_{s=1}^{k}r_{s}\leq i+k\}.\]
    Then $gr\,\Yn = U(\gln [t])$ (see, for example, Section 1.5 in \cite{Mol2}). Recall the difference operator $D_{n}\in D(\Yn)$ introduced in Section \ref{Bethe subalgebras in Yangian}. Theorem 10.2 in \cite{MTV6} states that $\deg D_{n}=-n$ and
    \begin{equation}\label{deg D}
        gr\, D_{n}=\mathfrak{D}_{n}, 
    \end{equation}
    where $\mathfrak{D}_{n}$ is the differential operator from Section \ref{trigonometric Gaudin}.

    Notice that $ev_{n,c}(t_{i_{1}j_{1}}^{(r_{1})}t_{i_{2}j_{2}}^{(r_{2})}\dots t_{i_{k}j_{k}}^{(r_{k})})$ is a polynomial in $c$ of degree $\sum_{s=1}^{k}r_{s}-k$. Therefore, the filtration on $\Yn$ is consistent with filtration \eqref{filtr lambda} on $\End(S^{(n)}(b)[a])\otimes\C(\lambda^{(m)})$, and formula \eqref{deg D} implies 
    \begin{equation}\label{deg RHS}
        gr\,\left(u^{\underline{n}}\prod_{i=1}^{m}(u-w_{i})\,\,D_{n}(b,w;a,\lambda^{(n)})\right)=u^{n}\prod_{i=1}^{m}(u-\xi_{i})\,\,\mathfrak{D}_{n}^{tG}(b,\xi;a,\lambda^{(n)}).
    \end{equation}

    On the other hand, using that $\deg D_{n}=-n$ and $\deg\, u=1$, we see that the degree of the left hand side of \eqref{C ij} is equal to $m$. Therefore, the degree of each term in the right hand side of \eqref{C ij} is less or equal then $m$, which gives $\deg (C_{ij})\leq m-i$, and we have
    \begin{equation}\label{deg LHS}
    \begin{split}
        gr\,\left(u^{\underline{n}}\prod_{i=1}^{m}(u-w_{i})\,\,D_{n}(b,w;a,\lambda^{(n)})\right) & =\sum_{j=0}^{n}\sum_{i=0}^{n}\,gr_{m}\left(C_{ij}(u+m)^{i}(u-n+1)^{\overline{j}}d^{j}\right)=\\
        & = \sum_{i=0}^{m}\sum_{j=0}^{n} \, gr_{m-i}\, (C_{ij})\,u^{i+j}\partial_{u}^{j}.
        \end{split}
    \end{equation}

    Comparing formulas \eqref{deg RHS} and \eqref{deg LHS}, we see that the proposition is proved.
\end{proof}

\subsection{Degeneration to XXX spin chain model}
Let us continue working with $\lambda^{(m)}$ as the set of commuting variables. We would like to consider the algebra $\overline{\mathcal{B}}_{m}(a,z;b,\lambda^{(m)})$ as a subalgebra of $\End(S^{(m)}(a)[b])\otimes\C(\lambda^{(m)})$. For that, we have to replace in the construction of $\overline{\mathcal{B}}_{m}(a,z;b,\lambda^{(m)})$ the $\glm$-module $M_{\lambda^{(m)}}^{(m)}$ with the module $M_{(\lambda^{(m)})}^{(m)}=U(\mathfrak{n}_{-})\otimes\C(\lambda^{(m)})$, where $\mathfrak{n}_{-}$ is the Lie subalgebra of lower-triangular matrices in $\glm$, and the action of $\glm$ on $M_{(\lambda^{(m)})}^{(m)}$ is defined as follows. First, consider the $\glm$-module $\Um\otimes\C(\lambda^{(m)})$, where $\glm$ acts on the first tensor factor by multiplication from the left and on the second tensor factor trivially. Then identify $M_{(\lambda^{(m)})}^{(m)}$ with the quotient $(\Um\otimes\C(\lambda^{(m)}))/J$, where $J$ is the left ideal generated by $\mathfrak{n}_{-}$ and $e^{(m)}_{ii}-\lambda_{n+i}$, $i=1\lc m$. With this change, $\overline{\mathcal{B}}_{m}(a,z;b,\lambda^{(m)})$ becomes a subalgebra of $\End(S^{(m)}(a)[b])\otimes\C(\lambda^{(m)})$. On the latter algebra, we consider the filtration described in the previous section, see \eqref{filtr lambda}.

It follows from Theorem \ref{duality rep} that we can find elements $D_{ij}\in \overline{\mathcal{B}}_{m}(a,z;b,\lambda^{(m)})$, $i=0\lc m$, $j=0\lc n$ such that 
\begin{equation}\label{D ij}
u^{\underline{m}}\prod_{i=1}^{n}(u-z_{i})\,\,D_{m}(a,z;b,\lambda^{(m)})=\sum_{i=0}^{m}\sum_{j=0}^{n}D_{ij}(u+n)^{\underline{j}}((u-m+1)d)^{i}.
\end{equation}

\begin{prop}\label{XXX}
For each $i=0\lc m$, $j=0\lc n$, we have $\deg (D_{ij})\leq m-i$, and the following holds:
\begin{equation}\label{gr D ij}
\sum_{i=0}^{m}\sum_{j=0}^{n} \, gr_{m-i}\, (D_{ij})\,(u+n)^{\underline{j}}\tau^{i}=\prod_{i=1}^{n}(u-z_{i})\,\,D_{m}^{XXX}(a,\lambda^{(n)};b,\xi).
\end{equation}
\end{prop}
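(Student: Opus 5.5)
The plan mirrors Proposition \ref{tG}, but with a different filtration mechanism. Now the parameter $\lambda^{(m)}=r\xi$ enters through the Verma module $M^{(m)}_{(\lambda^{(m)})}$ inside $S^{(m)}(a,z,\lambda^{(m)})$, and $u$ is not rescaled. The shift operator $\tau$ should survive, with degree $0$, while $d$ and $u$ keep degree $0$.

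First I would compute the leading behaviour of $\rho_{a,z,\lambda^{(m)}}(t_{ij}(u))$ on the singular subspace, transported by $\theta^{S^{(m)}(a)}_{\lambda^{(m)},b}$. On the last tensor factor, $ev_m(t_{ij}(u))=\delta_{ij}+e_{ij}/u$ acts on $M_{(\lambda^{(m)})}$. Under $\theta$, a vector of $S^{(m)}(a)[b]$ corresponds to a singular vector whose components along $U(\mathfrak n_-)$-descendants of $v_\mu$ are of negative degree in $r$. The reason is that solving the singularity equations $e_{ij}v=0$ for $i<j$ expresses the descendant coefficients through division by differences $\lambda_{n+i}-\lambda_{n+j}+\dots$, each of degree $1$.

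Second, I would deduce that in the limit the only surviving contribution of $M_{(\lambda^{(m)})}$ is the diagonal one, with $e_{ii}$ acting by $r\xi_i$. Thus, modulo lower degree, $\rho(T(u))\equiv T^{S}(u)\cdot\diag(1+r\xi_1/u,\dots,1+r\xi_m/u)$, where $T^{S}(u)$ denotes the monodromy on $S^{(m)}(a,z)$. I would make this precise using the $\mathbb Z$-filtration \eqref{filtr lambda} on $\End(S^{(m)}(a)[b])\otimes\C(\lambda^{(m)})$.

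Third, I would insert this into $D_m=\det(\delta_{ij}\tau-t_{ij}(u-j+1))$. Each entry of the $j$-th column then carries the factor $(1+r\xi_j/(u-j+1))$ at top degree $1$, except for the $\tau$ on the diagonal. Hence the top-degree part of $u^{\underline m}D_m$ is $r^m\prod_j\xi_j$ times the column-twisted determinant $\det(\delta_{ij}\xi_j^{-1}\tau - t^S_{ij}(u-j+1))$. Here the factor $(u-j+1)$ from $u^{\underline m}$ cancels the denominators, and one must check the ordering of $\tau$ past these factors. This twisted determinant is exactly $\prod\xi_i\,\rho_{a,z}(D_m(C))$ with $C=(\xi_i^{-1})$, that is, $D^{XXX}_m$. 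This gives
\[\mathrm{gr}_m\Bigl(u^{\underline m}\prod_{i}(u-z_i)\,D_m(a,z;b,\lambda^{(m)})\Bigr)=\prod_i(u-z_i)\,D^{XXX}_m(a,\lambda^{(n)};b,\xi).\]
Here $z$ is independent of $\lambda^{(m)}$ by \eqref{z w}.

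Finally, I would compare with the right side of \eqref{D ij}. Since $(u+n)^{\underline j}((u-m+1)d)^i$ has degree $0$ and its top part is itself, the bound $\deg$ of the left side $\le m$ forces $\deg D_{ij}\le m$. To obtain the sharper bound $\deg D_{ij}\le m-i$ and to get $\tau^i$ in place of $((u-m+1)d)^i$, I would first rewrite the left side in terms of $\tau^k$ rather than $d$. The natural grading is then the one in which $\tau$-powers are weighted. I expect this to be the main obstacle. The statement pairs $gr_{m-i}(D_{ij})$ with $\tau^i$, which suggests that the correct filtration assigns $\deg(d\text{-part})$ via $u d\sim r$, since $u$ is comparable to $r$ in the relevant regime, or equivalently that one should substitute $u\mapsto$ fixed and track that $((u-m+1)d)^i$ contributes $\tau^i$ with coefficient of the top $r$-power. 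I would handle it by expanding $((u-m+1)d)^i=\sum_k c_k(u)\tau^k$ and showing by induction on $i$, using the triangularity of this change of basis, that matching the $\tau^m$, then $\tau^{m-1}$, and so on, coefficients degree by degree yields both the bound and \eqref{gr D ij}.
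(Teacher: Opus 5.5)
Your setup matches the paper's: send $\lambda^{(m)}=r\xi$ to infinity, use $\theta^{-1}v=v\otimes v_{\lambda^{(m)}}+\text{(lower degree)}$, and note that at top degree only the diagonal $e_{jj}\mapsto r\xi_j$ survive. But Step 3 is wrong, and what you postpone as ``the main obstacle'' is exactly the content of the proposition.

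With $\deg u=\deg\tau=0$, a term of the column determinant that takes $\tau$ from $m-k$ columns and entries $t_{ij}$ from the other $k$ columns has degree at most $k$. So the coefficient $\bar B_k(u)$ of $\tau^{m-k}$ in $D_m(a,z;b,\lambda^{(m)})$ has degree $k$, and $gr_k\bar B_k(u)=B^{XXX}_k(u)/u^{\underline{k}}$, where $B^{XXX}_k$ is the $\tau^{m-k}$-coefficient of $D^{XXX}_m$. The paper proves this via $\Delta(t_J(u))$.

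Your displayed identity $gr_m\bigl(u^{\underline m}\prod_i(u-z_i)D_m(a,z;b,\lambda^{(m)})\bigr)=\prod_i(u-z_i)D^{XXX}_m$ is therefore false. The $\tau^m$-coefficient of the left side is $u^{\underline m}\prod_i(u-z_i)$, of degree $0$, while the right side has $\tau^m$-coefficient $\prod_i(u-z_i)$. Your identity would force $\deg D_{mj}=m$, contradicting $\deg D_{mj}\le 0$.

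Nor does $u^{\underline m}$ cancel the column denominators. Only columns where some $t_{ij}$ is chosen carry a denominator, so in the $\tau^{m-k}$ term the factor $u^{\underline m}/u^{\underline k}=(u-m+1)^{\overline{m-k}}$ is left over.

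A smaller point: the entrywise congruence $\rho(T(u))\equiv T^{S}(u)\,\diag(1+r\xi_j/u)$ does not hold as an operator statement on $S^{(m)}(a)\otimes M^{(m)}_{(\lambda^{(m)})}$, because $e_{kj}$ with $k<j$ has degree $1$ on descendants. What is true is that a product of $k$ entries applied to $v\otimes v_{\lambda^{(m)}}$ reaches degree $k$ only through all-diagonal terms.

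That leftover factor $(u-m+1)^{\overline{m-k}}$ is the missing idea, and the paper's argument runs as follows.
\begin{enumerate}
\item The degrees of the $\tau^{m-k}$-coefficients strictly increase with $k$. So substituting $\tau=d+1$ gives $d^{m-k}$-coefficients $\widetilde B_k$ with $gr_k\widetilde B_k=gr_k\bar B_k$. This is why $\tau^i$ may replace $d^i$ after grading.
\item Expand the left side of \eqref{D ij} in the basis $(u+n)^{\underline j}(u-m+1)^{\overline i}d^i$ and use $u^{\underline m}=u^{\underline{m-i}}(u-m+1)^{\overline i}$. The resulting coefficients $\widetilde D_{ij}$ have degree $\le m-i$ and satisfy $\sum_j gr_{m-i}(\widetilde D_{ij})(u+n)^{\underline j}=\prod_l(u-z_l)B^{XXX}_{m-i}(u)$.
\item By \eqref{rel 4.1}, $D_{ij}=\widetilde D_{ij}+\sum_{i'>i}c_{i'i}\widetilde D_{i'j}$ with constants $c_{i'i}$. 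The correction terms have degree $<m-i$, which gives both $\deg D_{ij}\le m-i$ and \eqref{gr D ij}.
\end{enumerate}

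Your closing induction over powers of $\tau$ could also work, but only with the correct input. That input is: the $\tau^i$-coefficient of the left side of \eqref{D ij} has degree $\le m-i$, with top part $(u-m+1)^{\overline i}\prod_l(u-z_l)B^{XXX}_{m-i}(u)$. Step 3 does not supply this. Note also that in this degeneration $u$ is not comparable to $r$, contrary to your heuristic. It is $\tau$ (equivalently $(u-m+1)d$) that compensates a power of $r$.
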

\begin{proof}
In this proof, for any $\Z$-filtered algebra $\mathcal{A}$ with filtration components $\mathcal{F}_{i}$, $i\in\Z$, we will use the following filtration on $\mathcal{A}((u^{-1}))$: the corresponding filtration components $\widetilde{\mathcal{F}}_{i}$, $i\in\Z$ are given by
\[\widetilde{\mathcal{F}}_{i}=\{\sum_{l}f_{l}u^{l}\,\vert\,f_{l}\in\mathcal{F}_{i}\}.\]
In particular, $\deg u = 0$, note the difference with the proof of the previous proposition.

    Fix $J=\{j_{1}<j_{2}<\dots <j_{k}\}\subset\{1\lc m\}$. Apply the comultiplication $\Delta$ to a quantum minor $t_{J}(u)\in\Ym ((u^{-1}))$. We get 
    \begin{equation}\label{delta t J}
    \begin{split}
        \Delta (t_{J}(u))=
        \sum_{\sigma\in S_{k}}(-1)^{\sigma}\sum_{i_{1},i_{2}\lc i_{k}=1}^{m}\bigl(
        t_{j_{\sigma(1)}i_{1}}(u)t_{j_{\sigma(2)}i_{2}}(u-1)\dots t_{j_{\sigma(k)}i_{k}}(u-k+1)\otimes\\
        \otimes t_{i_{1}j_{1}}(u)t_{i_{2}j_{2}}(u-1)\dots t_{i_{k}j_{k}}(u-k+1)\bigr).
        \end{split}
    \end{equation}

    Let us focus on the second factor in the tensor product in formula \eqref{delta t J}. Let $\rho_{(\lambda^{(m)})}:\Um\rightarrow \End (U(\mathfrak{n}_{-}))\otimes\C (\lambda^{(m)})$ be the homomorphism corresponding to the action of $\glm$ on $M_{(\lambda^{(m)})}^{(m)}$. Using the filtration on $\End (U(\mathfrak{n}_{-}))\otimes\C (\lambda^{(m)})$ defined in formula \eqref{filtr lambda}, we argue that the degree of
    \[\rho_{(\lambda^{(m)})}\circ\, ev_{m}\bigl(t_{i_{1}j_{1}}(u)t_{i_{2}j_{2}}(u-1)\dots t_{i_{k}j_{k}}(u-k+1)\bigr)\]
    is maximal and equal to $k$ if and only if $i_{s}=j_{s}$ for all $s=1\lc k$ and
    \begin{equation}\label{gr t}
        gr_{k}\bigl(\rho_{(\lambda^{(m)})}\circ\, ev_{m}\bigl(t_{j_{1}j_{1}}(u)t_{j_{2}j_{2}}(u-1)\dots t_{j_{k}j_{k}}(u-k+1)\bigr)\bigr)=\frac{\prod_{s=1}^{k}\xi_{j_{s}}}{u^{\underline{k}}}\cdot\id,
    \end{equation}
    where $\id$ is the identity transformation of $U(\mathfrak{n}_{-})$.
    Indeed, this follows from two observations. Firstly, for any $i=1\lc m$, we have
    \[\rho_{(\lambda^{(m)})}\circ\, ev_{m}\bigl(t_{ii}(u)\bigr)\cdot (1\otimes 1)= 1\otimes \left(1+\frac{\xi_{i}}{u}\right).\]
    Secondly, for any $x\in\mathfrak{n}_{-}$, we have 
    \begin{equation*}
    \begin{split}
    & \rho_{(\lambda^{(m)})}\circ\, ev_{m}\bigl(t_{j_{1}j_{1}}(u)t_{j_{2}j_{2}}(u-1)\dots t_{j_{k}j_{k}}(u-k+1)\bigr)\cdot (x\otimes 1)= \\
    & = x\bigl(\rho_{(\lambda^{(m)})}\circ\, ev_{m}\bigl(t_{j_{1}j_{1}}(u)t_{j_{2}j_{2}}(u-1)\dots t_{j_{k}j_{k}}(u-k+1)\bigr)\cdot (1\otimes 1)\bigr) +\text{terms of smaller degree}.
    \end{split}
    \end{equation*}

    For any $k=0\lc m$, let $\bar{B}_{k}(u)$ and $B^{XXX}_{k}(u)$ be the coefficients of the difference operators $D_{m}(a,z;b,\lambda^{(m)})$ and $D_{m}^{XXX}(a,\lambda^{(n)};b,\xi)$, respectively:
    \[D_{m}(a,z;b,\lambda^{(m)})=\sum_{k=0}^{m}\bar{B}_{k}(u)\tau^{m-k},\quad D_{m}^{XXX}(a,\lambda^{(n)};b,\xi)=\sum_{k=0}^{m}B^{XXX}_{k}(u)\tau^{m-k}.\]
    Then, from the construction of the difference operators $D_{m}(a,z;b,\lambda^{(m)})$ and $D_{m}^{XXX}(a,\lambda^{(n)};b,\xi)$ and formula \eqref{B=t}, it follows that
    \[\bar{B}_{k}(u)=\widehat{\theta^{S^{(m)}(a)}_{\lambda^{(m)},b}}\left(\sum_{J, |J|=k}\rho_{a,z}\otimes(\rho_{(\lambda^{(m)})}\circ\, ev_{m})(\Delta (t_{J}(u)))\right),\]
    \[B^{XXX}_{k}(u)=\sum_{J, |J|=k}\left(\prod_{j\in J}\xi_{j}\right)\rho_{a,z}(t_{J}(u)),\]
    where $\rho_{a.z}$ is the homomorphism corresponding to $\Ym$-module $S^{(m)}(a,z)$, see formula \eqref{S(a,z)}.
    
    Denote $X_{k}(u)=\sum_{J, |J|=k}\rho_{a,z}\otimes(\rho_{(\lambda^{(m)})}\circ\, ev_{m})(\Delta (t_{J}(u)))$. Then formulas \eqref{delta t J} and \eqref{gr t} imply
    \[gr_{k}(X_{k}(u))=\frac{1}{u^{\underline{k}}}B_{k}^{XXX}(u)\otimes \id.\]

    In this proof, to simplify the notation, let us denote $\theta\coloneqq\theta^{S^{(m)}(a)}_{\lambda^{(m)},b}$. Notice that we can extend $\theta$ to the map $\bigl(\bigotimes_{i=1}^{n}S^{a_{i}}\C^{m}\otimes M^{(m)}_{(\lambda^{(m)})}\bigr)[\lambda^{(m)}+b]\rightarrow S^{(m)}(a)[b]\otimes\C(\lambda^{(m)})$ using the same formula \eqref{theta} with $\theta^{-1}$ being its \textit{right} inverse. We will use it on the step 4 in the computation \eqref{gr Bv} below.

    It is also proved in \cite[Theorem 4]{Ryb2} that for any $v\in S^{(m)}(a)[b]\otimes\C(\lambda^{(m)})$, we have
    \[\theta^{-1}v=v\otimes v_{\lambda^{(m)}}+\text{terms of smaller degree},\]
    where $v_{\lambda^{(m)}}$ is a highest weight vector in $M^{(m)}_{(\lambda^{(m)})}$. We will use it on the step 5 in the computation \eqref{gr Bv} below.

    Fix a vector $v\in S^{(m)}(a)[b]\otimes\C(\lambda^{(m)})$ of degree $0$. In the next computation, we work with series in $u^{-1}$ assuming that all operations are applied simultaneously to all coefficients of a series.
    \begin{equation}\label{gr Bv}
        \begin{split}
            & gr_{k}(\bar{B}_{k}(u))v = gr_{k}(\bar{B}_{k}(u)v) = gr_{k}((\widehat{\theta}X_{k}(u))v) =\\
            & = gr_{k}(\theta X_{k}(u)\theta^{-1}v) = \theta\, gr_{k}(X_{k}(u)\theta^{-1}v) = \theta\, \bigl( gr_{k}(X_{k}(u))\bigr)(v\otimes v_{\lambda^{(m)}}) = \\
            & = \theta\left(\frac{1}{u^{\underline{k}}}B_{k}^{XXX}(u)\otimes \id\right)(v\otimes v_{\lambda^{(m)}})=\theta\left(\frac{1}{u^{\underline{k}}}B_{k}^{XXX}(u)v\otimes v_{\lambda^{(m)}}\right) = \frac{1}{u^{\underline{k}}}B_{k}^{XXX}(u)v.
        \end{split}
    \end{equation}
    
    Therefore, it follows that
    \begin{equation}\label{gr B}
        gr_{k}\bigl(\bar{B}_{k}(u)\bigr)=\frac{1}{u^{\underline{k}}}B_{k}^{XXX}(u).
    \end{equation}

    Now, let us consider the expansion of $D_{m}(a,z;b,\lambda^{(m)})$ in powers of $d$ rather then in powers of $\tau$, and let $\widetilde{B}_{k}$, $k=0\lc m$ be the corresponding coefficients: 
    \[D_{m}(a,z;b,\lambda^{(m)})=\sum_{k=0}^{m}\widetilde{B}_{k}(u)d^{m-k}.\]
    Then, for each $k=0\lc m$, we have $\widetilde{B}_{k}(u)=\bar{B}_{k}(u)+\sum_{k'=0}^{k-1}c_{k'k}\bar{B}_{k'}(u)$ for some complex numbers $c_{k'k}$. in particular, $\deg \widetilde{B}_{k}(u)=\deg \bar{B}_{k}(u)=k$ and $gr_{k} (\widetilde{B}_{k}(u))=gr_{k} (\bar{B}_{k}(u))$.

    Also, using formula \eqref{rel 4.1}, we can find elements $\widetilde{D}_{ij}\in\overline{\mathcal{B}}_{m}(a,z;b,\lambda^{(m)})$, $i=0\lc m$, $j=0\lc n$ such that 
\begin{equation*}
u^{\underline{m}}\prod_{i=1}^{n}(u-z_{i})\,\,D_{m}(a,z;b,\lambda^{(m)})=\sum_{i=0}^{m}\sum_{j=0}^{n}\widetilde{D}_{ij}(u+n)^{\underline{j}}(u-m+1)^{\overline{i}}d^{i}.
\end{equation*}
Then, for any $k=0\lc m$, we have 
\begin{equation}\label{BD}
u^{\underline{m}}\prod_{i=1}^{n}(u-z_{i})\widetilde{B}_{m-k}(u)=\sum_{j=0}^{n}\widetilde{D}_{kj}(u+n)^{\underline{j}}(u-m+1)^{\overline{k}},
\end{equation}
which implies $\deg \widetilde{D}_{kj}\leq m-k$ for any $j=0\lc n$. It is also easy to see that $D_{kj}=\widetilde{D}_{kj}+\sum_{k'=k+1}^{m}c'_{k'k}\widetilde{D}_{k'j}$ for some complex numbers $c'_{k'k}$. Therefore, $\deg D_{kj}\leq m-k$ and $gr_{m-k}(\widetilde{D}_{kj})=gr_{m-k}(D_{kj})$. Thus, identity \eqref{BD} gives
\begin{equation}\label{BD2}
    u^{\underline{m}}\prod_{i=1}^{n}(u-z_{i})gr_{m-k} (\bar{B}_{m-k}(u))=\sum_{j=0}^{n}gr_{m-k}(D_{kj})(u+n)^{\underline{j}}(u-m+1)^{\overline{k}}
\end{equation}
Finally, using \eqref{gr B} and \eqref{BD2}, we compute
\begin{equation*}
    \begin{split}
        \prod_{i=1}^{n}(u-z_{i})\,\,D_{m}^{XXX}(a,\lambda^{(n)};b,\xi)=u^{\underline{m}}\prod_{i=1}^{n}(u-z_{i})\sum_{k=0}^{m}\frac{B_{k}^{XXX}(u)}{u^{\underline{k}}}\cdot\frac{1}{(u-m+1)^{\overline{m-k}}}\tau^{m-k}=\\
        =\sum_{k=0}^{m}\left(u^{\underline{m}}\prod_{i=1}^{n}(u-z_{i})gr_{k}(\bar{B}_{k}(u))\right)\cdot\frac{1}{(u-m+1)^{\overline{m-k}}}\tau^{m-k}=\\
        =\sum_{k=0}^{m}\left(\sum_{j=0}^{n}gr_{m-k}(D_{kj})(u+n)^{\underline{j}}\right)\tau^{k},
    \end{split}
\end{equation*}
which finishes the proof of the proposition.
\end{proof}

\subsection{Degeneration of the duality map $\Psi_{\lambda}$}
\subsubsection{} Let $I$ be a totally ordered set, and let $\mathbb{F}$ be a field extension of $\C$. Suppose that as a complex vector space, $\mathbb{F}$ is $I$-filtered. For any other complex vector space $V$, introduce an $I$-filtration $\{\mathcal{F}_{i}^{V\otimes\mathbb{F}}, i\in I\}$ on $V\otimes \mathbb{F}$ generalizing the filtration \eqref{filtr lambda}, that is 
\begin{equation}\label{filtr f}
    \mathcal{F}_{i}^{V\otimes\mathbb{F}}=span_{\C}\{v\otimes f\,\vert\, \deg (f)\leq i\}.
\end{equation}
Let $V$ and $W$ be two complex vector spaces, and let $\phi$ be an $\mathbb{F}$-linear space from $V\otimes\mathbb{F}$ to $W\otimes\mathbb{F}$. Considering $\phi$ as an element of $\Hom_{\C}(V,W)\otimes \mathbb{F}$ and using the filtration on the latter space defined by \eqref{filtr f}, we denote $\phi^{gr}=gr (\phi)$.

We say that $\phi$ preserves the degree if $\deg (\phi(p))=\deg (p)$ for any $p\in V\otimes\mathbb{F}$. We will need the following three lemmas.
\begin{lem}\label{lemma1}
Suppose that $\phi$ preserves the degree. Then for any $k\in I$ and $p\in V\otimes\mathbb{F}$ such that $\deg (p)\leq k$, we have $gr_{k}(\phi(p))=\phi^{gr}(gr_{k} (p))$.
\end{lem}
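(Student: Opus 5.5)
We write $\phi$ in coordinates. Fix bases $\{v_\alpha\}$ of $V$ and $\{w_\beta\}$ of $W$. If $V$ or $W$ is infinite-dimensional, we restrict to the finite-dimensional subspaces that actually occur; in the paper's applications both spaces are finite-dimensional. Then $\phi(v_\alpha\otimes 1)=\sum_\beta w_\beta\otimes \phi_{\beta\alpha}$ with $\phi_{\beta\alpha}\in\mathbb{F}$. Set $N=\deg\phi=\max_{\alpha,\beta}\deg\phi_{\beta\alpha}$, so that $\phi^{gr}=\sum_{\alpha,\beta} gr_N(\phi_{\beta\alpha})\,E_{\beta\alpha}$. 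We use the implicit structure under which $gr$ is defined on $V\otimes\mathbb{F}$: a filtration of $\mathbb{F}$ with $\deg(fg)=\deg f+\deg g$ and a multiplicative identification of leading terms, as for $\C(\xi)((r^{-1}))$ in Section \ref{5.5}. Concretely, $gr_k(\sum_\alpha v_\alpha\otimes p_\alpha)=\sum_\alpha v_\alpha\otimes gr_k(p_\alpha)$, where $gr_k(p_\alpha)=0$ if $\deg p_\alpha<k$.

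The first step is to show that $N=0$. Take $p=v_\alpha\otimes 1$, which has degree $0$. Since $\phi$ preserves the degree, $\max_\beta\deg\phi_{\beta\alpha}=0$ for every $\alpha$, and therefore $N=0$.

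The second step is to show that $\phi^{gr}$ is injective. Suppose $\phi^{gr}(x)=0$ for some nonzero $x=\sum c_\alpha v_\alpha\in V$. Then $\phi(x\otimes 1)=\sum_\beta w_\beta\otimes\sum_\alpha c_\alpha\phi_{\beta\alpha}$, and every coefficient has degree $\le 0$. Its degree-$0$ leading part equals $\phi^{gr}(x)=0$, so every coefficient has degree $<0$. This contradicts $\deg\phi(x\otimes 1)=\deg(x\otimes 1)=0$.

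The third step is to compute $gr_k$. Write $p=\sum_\alpha v_\alpha\otimes p_\alpha$ with all $\deg p_\alpha\le k$, and let $S=\{\alpha:\deg p_\alpha=k\}$. Then
\[\phi(p)=\sum_\beta w_\beta\otimes\sum_\alpha\phi_{\beta\alpha}p_\alpha .\]
Terms with $\alpha\notin S$ have degree $<k$. For $\alpha\in S$, the product $\phi_{\beta\alpha}p_\alpha$ has degree $\le k$, and its $gr_k$ equals $gr_0(\phi_{\beta\alpha})\,gr_k(p_\alpha)$ by multiplicativity of leading terms; when $\deg\phi_{\beta\alpha}<0$ both sides vanish. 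Summing over $\alpha$ gives that the $gr_k$-component of $\phi(p)$ equals $\phi^{gr}(gr_k p)$, where $gr_k$ is taken in the sense ``coefficient of order $k$, possibly zero''.

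The main obstacle is that the lemma's $gr_k$ should mean the genuine leading part, so we must rule out cancellation in the top degree when $\deg p=k$. This is exactly what injectivity of $\phi^{gr}$ supplies. If $\deg p=k$, then $gr_k p\neq 0$ as an element of $V\otimes gr_k\mathbb{F}$. To see that $\phi^{gr}(gr_k p)\ne 0$, we expand $gr_k p$ in a $\C$-basis of the relevant span of leading coefficients. Because $\phi^{gr}$ is $\C$-linear and injective on $V$, it remains injective on $V\otimes_\C U$ for any $\C$-space $U$. Hence $\phi^{gr}(gr_k p)\neq0$, and the equality above is an honest equality of leading terms. If instead $\deg p<k$, both sides are zero. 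This is consistent, since $\deg\phi(p)=\deg p<k$ by degree preservation.
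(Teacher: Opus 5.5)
Your proposal is correct and follows the paper's proof: you first show $\deg\phi=0$, so that $gr_0(\phi(v))=\phi^{gr}(v)$ for $v\in V$, and then write $p$ with coefficients of degree $\le k$ and apply $gr_k(fg)=gr_0(f)\,gr_k(g)$ term by term. Your separate injectivity step is not needed. The paper's $gr_k$ is the linear canonical projection $\mathcal{F}_k\to\mathcal{F}_k/\sum_{j<k}\mathcal{F}_j$, so a zero value is already allowed, and in any case the hypothesis $\deg\phi(p)=\deg p$ already rules out cancellation in the top degree.
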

\begin{proof}
    It is clear that for any $v\in V$ and $k\geq \deg\phi$, we have $\deg (\phi (v))=\deg \phi$ and $gr_{k}(\phi(v))=(gr_{k}\phi)(v)$. (Here and below, we identify $V$ and $V\otimes 1$).
    In particular, if $\phi$ preserves the degree, then $\deg (\phi) = 0$, and for any $v\in V$, we have $gr_{0}(\phi(v))=\phi^{gr}(v)$.  
    
    Let $k\in I$ and $p\in V\otimes\mathbb{F}$ be such that $\deg (p)\leq k$.
    We can write $p=\sum_{i=1}^{l}v_{i}\otimes f_{i}$ for some $v_{1}\lc v_{l}\in V$, $f_{1}\lc f_{l}\in\mathbb{F}$ such that $\deg (f_{i})\leq k$ for all $i=1\lc l$. Then we compute
    \[gr_{k}(\phi(p))= gr_{k}\left(\sum_{i=1}^{l}\phi(v_{i}) f_{i}\right)= \sum_{i=1}^{l}gr_{0}(\phi(v_{i})) gr_{k}(f_{i})=\sum_{i=1}^{l}\phi^{gr}(v_{i})gr_{k}(f_{i})=\phi^{gr}(gr_{k}(p)).\]
\end{proof}

\begin{lem}\label{lemma2}
    Suppose that $\phi$ preserves the degree and that $\phi$ is an isomorphism.
    Then $\phi^{gr}$ is also an isomorphism and $(\phi^{gr})^{-1}=(\phi^{-1})^{gr}$.
\end{lem}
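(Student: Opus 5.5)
The plan is to show that $\phi^{-1}$ also preserves the degree, and then to apply Lemma \ref{lemma1} to the two compositions $\phi^{-1}\circ\phi$ and $\phi\circ\phi^{-1}$.

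\emph{Step 1: $\phi^{-1}$ preserves the degree.} Let $q\in W\otimes\mathbb{F}$ and put $p=\phi^{-1}(q)$. Since $\phi$ preserves the degree,
\[\deg(\phi^{-1}(q))=\deg(p)=\deg(\phi(p))=\deg(q).\]
This is immediate.

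\emph{Step 2: compose the associated graded maps.} Since $\phi$ preserves the degree, we have $\deg\phi=0$ (as noted in the proof of Lemma \ref{lemma1}), and the same holds for $\phi^{-1}$. Take $v\in V$, so $\deg(v)\le 0$. Applying Lemma \ref{lemma1} with $k=0$ to $\phi$ and $p=v$ gives
\[gr_{0}(\phi(v))=\phi^{gr}(v).\]
Here $\deg(\phi(v))=\deg(v)\le 0$. Applying Lemma \ref{lemma1} with $k=0$ to $\phi^{-1}$ and $p=\phi(v)$ gives
\[v=gr_{0}(v)=gr_{0}\bigl(\phi^{-1}(\phi(v))\bigr)=(\phi^{-1})^{gr}\bigl(gr_{0}(\phi(v))\bigr)=(\phi^{-1})^{gr}\phi^{gr}(v).\]
For the first equality, identify $V$ with $V\otimes 1$, so that $gr_{0}(v)=v$ for $v\ne0$; the case $v=0$ is trivial. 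Hence $(\phi^{-1})^{gr}\circ\phi^{gr}=\id_{V}$. The same argument with the roles of $\phi$ and $\phi^{-1}$ exchanged gives $\phi^{gr}\circ(\phi^{-1})^{gr}=\id_{W}$. Both maps are $\C$-linear, so $\phi^{gr}$ is an isomorphism with inverse $(\phi^{-1})^{gr}$.

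\emph{Main obstacle.} The only delicate point is bookkeeping in the identification $gr(V\otimes\mathbb{F})\supset V$. In particular, one must be careful about the case $\deg(v)<0$, which cannot occur for nonzero $v\in V\otimes 1$, since $\deg 1=0$. Once this is settled, the argument is formal and reduces entirely to Lemma \ref{lemma1}.
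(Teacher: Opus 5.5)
Your proposal is correct and follows the paper's proof essentially verbatim. You first show that $\phi^{-1}$ preserves the degree, then apply Lemma \ref{lemma1} twice to get $v=gr_{0}\bigl(\phi^{-1}(\phi(v))\bigr)=(\phi^{-1})^{gr}\phi^{gr}v$ for $v\in V$, and argue symmetrically on $W$. The paper concludes $(\phi^{-1})^{gr}\phi^{gr}=\id_{V\otimes\mathbb{F}}$ by $\mathbb{F}$-linearity, while you state the identity on $V$ itself; this is equivalent, since $V$ spans $V\otimes\mathbb{F}$.
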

\begin{proof}
    First, observe that $\phi^{-1}$ preserves the degree: for any $p\in W\otimes\mathbb{F}$, we have $\deg (\phi^{-1}p)=\deg (\phi (\phi^{-1}(p))=\deg(p)$. Then, using the previous lemma, for any $v\in V$, we get
    \[v=gr_{0}v=gr_{0}(\phi^{-1}(\phi(v))=(\phi^{-1})^{gr}\phi^{gr}gr_{0}(v)=(\phi^{-1})^{gr}\phi^{gr}v.\]
    Since both $id_{V\otimes\mathbb{F}}$ and $(\phi^{-1})^{gr}\phi^{gr}$ are $\mathbb{F}$-linear, the previous computation implies $(\phi^{-1})^{gr}\phi^{gr}=id_{V\otimes\mathbb{F}}$. Similarly, we get $\phi^{gr}(\phi^{-1})^{gr}=id_{W\otimes\mathbb{F}}$.
\end{proof}
\begin{lem}\label{lemma3}
Suppose that $\phi$ preserves the degree and that $\phi$ is an isomorphism. Let $X\in\End(V)\otimes\mathbb{F}$, $Y\in\End (W)\otimes\mathbb{F}$, and $k\in I$ be such that $k\geq \deg (X)$, $k\geq \deg (Y)$. Suppose that $\widehat{\phi}(X)=Y$. Then $\widehat{\phi^{gr}}(gr_{k}\,X)= gr_{k}\,Y$. 
\end{lem}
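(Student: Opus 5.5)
The plan is to unwind $\widehat{\phi}(X)=Y$ into the identity $\phi\circ X=Y\circ\phi$ of $\mathbb{F}$-linear maps $V\otimes\mathbb{F}\to W\otimes\mathbb{F}$. We then apply $gr_{k}$ to both sides evaluated on vectors of $V$, and use Lemma \ref{lemma1} to move $gr$ past $\phi$. Since $\phi^{gr}$ and $gr_{k}X$, $gr_{k}Y$ are all $\mathbb{F}$-linear (extended from $\C$-linear maps), it suffices to check the identity $\phi^{gr}\circ gr_{k}X=gr_{k}Y\circ\phi^{gr}$ on vectors $v\in V=V\otimes 1$. Invertibility of $\phi^{gr}$ is given by Lemma \ref{lemma2}, so this identity is exactly $\widehat{\phi^{gr}}(gr_{k}X)=gr_{k}Y$.

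First we establish a preliminary fact, analogous to the first line of the proof of Lemma \ref{lemma1}. For $X\in\End(V)\otimes\mathbb{F}$ with $\deg X\le k$ and $v\in V$, we have $\deg(Xv)\le k$ and $gr_{k}(Xv)=(gr_{k}X)(v)$. To see this, write $X=\sum_s A_s\otimes f_s$ with $\deg f_s\le k$ and the $A_s$ linearly independent over $\C$. Then $Xv=\sum_s A_s v\otimes f_s$, and $gr_{k}$ of this is $\sum_s A_sv\otimes gr_{k}f_s$. The same holds for $Y$ with $w\in W$, and by $\mathbb{F}$-linearity it extends to $w\in W\otimes\mathbb{F}$ of degree $0$ (i.e., $gr_{k}(Yw)=(gr_{k}Y)(gr_0 w)$). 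To justify the extension, expand $w=\sum w_i\otimes g_i$ with $\deg g_i\le 0$. Each product $f_s g_i$ then has degree at most $k$, and $gr_{k}(f_sg_i)=gr_{k}f_s\cdot gr_0 g_i$, using that degree is additive on the field $\mathbb{F}$ (as for $\C(\xi)((r^{-1}))$).

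Now fix $v\in V$. On one side, $X v$ has degree at most $k$, so Lemma \ref{lemma1} gives $gr_{k}(\phi(Xv))=\phi^{gr}(gr_{k}(Xv))=\phi^{gr}((gr_{k}X)v)$. On the other side, $\phi(v)$ has degree $0$ because $\phi$ preserves degree. We have $gr_0\phi(v)=\phi^{gr}(v)$, as noted at the start of the proof of Lemma \ref{lemma1}. By the preliminary fact, $gr_{k}(Y\phi(v))=(gr_{k}Y)(\phi^{gr}v)$. Since $\phi Xv=Y\phi v$, the two sides agree, which gives the required identity on $V$ and hence on $V\otimes\mathbb{F}$.

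The main obstacle is the extension step for $Y$ applied to $\phi(v)$, which is not a pure tensor. It needs care with multiplicativity of $gr$ on $\mathbb{F}$, namely $gr_{i}(f)gr_{j}(g)=gr_{i+j}(fg)$ for $\deg f\le i$ and $\deg g\le j$. I would state this explicitly as a property of the filtration on $\mathbb{F}$: it holds for the degree filtration by the leading power of $r$, which is the case used in the paper.
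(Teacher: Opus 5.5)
Your proof is correct and is essentially the paper's argument. The paper computes $(gr_kY)w$ for $w\in W$ by writing $Y=\phi X\phi^{-1}$ and pushing $gr_k$ through with Lemmas \ref{lemma1} and \ref{lemma2}. You instead compare both sides of $\phi Xv=Y\phi v$ for $v\in V$, so you need Lemma \ref{lemma2} only for the invertibility of $\phi^{gr}$ at the end. Your explicit check that $gr_k(Yw)=(gr_kY)(gr_0w)$ for non-pure $w$ of degree $0$, via multiplicativity of $gr$ on $\mathbb{F}$, is the same step the paper uses tacitly when it applies $X$ to $\phi^{-1}w$.
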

\begin{proof}
    For any $w\in W$, using Lemmas \ref{lemma1} and \ref{lemma2}, we compute
    \begin{equation*}
        \begin{split}
            (gr_{k}\, Y)w = gr_{k}\, (Yw) = gr_{k}\, (\phi X \phi^{-1} w) = \phi^{gr}\,gr_{k}\,(X \phi^{-1} w) = \\
             = \phi^{gr} (gr_{k}\, X)(gr_{0}\, (\phi^{-1}w)) = \phi^{gr} (gr_{k}\, X) (\phi^{gr})^{-1}w = \widehat{\phi^{gr}}(gr_{k}\,X) w. 
        \end{split}
    \end{equation*}
    Since both $gr_{k}\, Y$ and $\widehat{\phi^{gr}}(gr_{k}\,X)$ are $\mathbb{F}$-linear, the previous computation implies $\widehat{\phi^{gr}}(gr_{k}\,X)= gr_{k}\,Y$ and the lemma is proved. 
\end{proof}
\subsubsection{}\label{4.7.2} Now, let us make the necessary adjustments to think of the map $\Psi_{\lambda}$ as a $\C(\lambda^{(m)})$-linear map from $S^{(m)}(a)[b]\otimes \C(\lambda^{(m)})$ to $S^{(n)}(b)[a]\otimes \C(\lambda^{(m)})$. Namely, in the chain \eqref{chain}, we replace $S^{(m)}(a)[b]$, $S^{(n)}(b)[a]$, and $M^{(m)}_{\lambda^{(m)}}$ with $S^{(m)}(a)[b]\otimes \C(\lambda^{(m)})$, $S^{(n)}(b)[a]\otimes \C(\lambda^{(m)})$, and $M^{(m)}_{(\lambda^{(m)})}$, respectively. We will also need to replace the module $M_{\lambda}^{(m+n)}$ with the $\gl_{m+n}$-analog $M_{(\lambda)}^{(m+n)}$ of the module $M^{(m)}_{(\lambda^{(m)})}$ with the difference that in $M_{(\lambda)}^{(m+n)}$, only the last $m$ components of the highest weight $\lambda$ become variables, and the rest components remain complex numbers. With such a change, all maps in \eqref{chain} have the obvious $\C(\lambda^{(m)})$-linear analogs and $\Psi_{\lambda}$ becomes an element of $\Hom (S^{(m)}(a)[b], S^{(n)}(b)[a])\otimes \C(\lambda^{(m)})$. 

Unfortunately, we cannot apply the results of the previous subsection to the map $\Psi_{\lambda}$, because it is not clear whether $\Psi_{\lambda}$ preserves the degree or not. Therefore, we would like to modify the map $\Psi_{\lambda}$ in such a way that the resulting map preserves the degree and satisfies relation \eqref{duality explicit}. For that, we need to diagonalize the algebra $\overline{\mathcal{B}}_{n}(b,w;a,\lambda^{(n)})$, which requires changing the field $\C(\lambda^{(n)})$ to another, algebraically closed field that we introduce below.

Recall the Zarisky open subset $J_{a,b}\in\C^{n+m}$ of all values of $\lambda$ at which all the maps in the chain \eqref{chain} are well defined isomorphisms. Fix some $\lambda^{(n)}$ such that $J_{a,b}^{(m)}\coloneqq\{\lambda_{0}^{(m)}\in\C^{m}\,\vert\, (\lambda^{(n)},\lambda_{0}^{(m)})\in J_{a,b}\}$ is not empty. Let $\C^{reg}(\lambda^{(m)})\subset\C(\lambda^{(m)})$ be the subalgebra of all rational functions which are regular on $J^{(m)}_{a,b}$. Notice that $\Psi_{\lambda}\in \Hom (S^{(m)}(a)[b], S^{(n)}(b)[a])\otimes \C^{reg}(\lambda^{(m)})$, $C_{ij}\in \End (S^{(n)}(b)[a])\otimes \C^{reg}(\lambda^{(m)})$, and  $D_{ij}\in \End (S^{(m)}(a)[b])\otimes \C^{reg}(\lambda^{(m)})$. Recall the homomorphism $\pi_{r,\xi}:\C(\lambda^{(m)})\rightarrow\C(r,\xi)\subset \C(\xi)((r^{-1}))$, $\pi_{r,\xi}(\lambda_{n+i})=r\xi_{i}$. Then $\pi_{r,\xi}(\C^{reg}(\lambda^{(m)}))\subset \C^{reg}(\xi)((r^{-1}))$, where $\C^{reg}(\xi)$ is the algebra of rational functions in $\xi$, which are regular on some Zariski open subset $\widetilde{J}^{(m)}_{a,b}\subset\C^{m}$. Actually, if $J_{a,b}^{(m)}$ is given by inequalities $p(\lambda^{m})\neq 0$ for $p\in P\subset \C[\lambda^{(m)}]$, then $\widetilde{J}_{a,b}^{(m)}$ is given by inequalities $gr\, (p) (\xi)\neq 0$ for $p\in P$.

Let us now replace the elements $\Psi_{\lambda}$, $D_{ij}$, and $C_{ij}$ with the evaluations of their images under the map $\id\otimes\pi_{r,\xi}$ at some value of $\xi$ in $\widetilde{J}_{a,b}^{(m)}$, keeping the same notation. That is, we think of $\xi=(\xi_{1}\lc\xi_{m})$ as the set of complex parameters again, and we consider $\Psi_{\lambda}$, $D_{ij}$, and $C_{ij}$ as elements of $\Hom (S^{(m)}(a)[b], S^{(n)}(b)[a])\otimes \C((r^{-1}))$, $\End (S^{(n)}(b)[a])\otimes \C((r^{-1}))$, and  $\End (S^{(m)}(a)[b])\otimes \C((r^{-1}))$, respectively. So, these elements now belong to vector spaces over the field $\C((r^{-1}))$, the algebraic closure of which is known to be the algebra of Puiseux series, which we denote by $\mathbb{P}_{r}$. An element of $\mathbb{P}_{r}$ is a series of the form
\[\sum_{i\in\Z,i\leq k}c_{i}r^{\frac{i}{l}}\]
for some $k,l\in \Z$, $c_{i}\in\C$, and the addition and multiplication between series are defined in a natural way.

Let us extend the field and think of $\Psi_{\lambda}$, $D_{ij}$, and $C_{ij}$ as elements of $\Hom (S^{(m)}(a)[b], S^{(n)}(b)[a])
\otimes \mathbb{P}_{r}$, $\End (S^{(n)}(b)[a])\otimes \mathbb{P}_{r}$, and  $\End (S^{(m)}(a)[b])\otimes \mathbb{P}_{r}$, respectively. In particular, the algebras $\overline{\mathcal{B}}_{n}(b,w;a,\lambda^{(n)})$, where $w$ is now a linear function in $r$ and $\xi$, and $\overline{\mathcal{B}}_{m}(a,z;b,r\xi)$ are now $\mathbb{P}_{r}$-subalgebras of $\End (S^{(n)}(b)[a])\otimes \mathbb{P}_{r}$ and  $\End (S^{(m)}(a)[b])\otimes \mathbb{P}_{r}$ generated by $C_{ij}$ and $D_{ij}$, respectively. On $\mathbb{P}_{r}$, we have a natural $\mathbb{Q}$-filtration $\{\mathcal{F}_{q}^{\mathbb{P}},q\in\mathbb{Q}\}$, where for each $q\in\mathbb{Q}$,  $\mathcal{F}_{q}^{\mathbb{P}}$ consists of all elements of the form $\sum_{i\in\Z,i\leq ql}c_{i}r^{\frac{i}{l}}$ for some $l\in \Z$, $c_{i}\in\C$. Using \eqref{filtr f}, we extend this filtration to $V\otimes \mathbb{P}_{r}$ for any complex vector space $V$. Notice that Propositions \ref{tG} and \ref{XXX} still hold in the new setting if we make similar adjustments for the constructions of $\mathfrak{D}_{n}^{tG}(b,\xi;a,\lambda^{(n)})$ and $D_{m}^{XXX}(a,\lambda^{(n)};b,\xi)$ as follows. Namely, in the construction we should first treat $\xi$ as the set of variables, then we need to apply the homomorphism $\xi_{i}\mapsto r\xi_{i}$ to each rational function in $\xi$, and look at the result as an element of $\C(\xi)((r^{-1}))$. After that, we evaluate $\xi$ at some value in $\widetilde{J}_{a}$ and extend the field from $\C((r^{-1}))$ to $\mathbb{P}_{r}$.

We will need the following lemma.

\begin{lem}\label{diagonalization}
    There exists an open (in usual topology) subset $U$ of $\C^{n+m}$, such that for any $(\xi,\lambda^{(n)})\in U$, the algebra $\overline{\mathcal{B}}_{n}(b,w;a,\lambda^{(n)})$ is diagonalizable over $\mathbb{P}_{r}$ with one-dimensional eigenspaces.
\end{lem}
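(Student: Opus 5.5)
We reduce the statement to a simple-spectrum property of the leading term of the algebra. By Proposition \ref{tG}, the coefficients $C_{ij}$ generating $\overline{\mathcal{B}}_{n}(b,w;a,\lambda^{(n)})$ have $\deg C_{ij}\leq m-i$. Their rescalings $r^{-(m-i)}C_{ij}$ therefore lie in $\End(S^{(n)}(b)[a])\otimes\C[[r^{-1}]]$. Their constant terms $gr_{m-i}(C_{ij})$ are, by \eqref{Prop 4.2}, exactly the coefficients of $u^{n}\prod_{i}(u-\xi_{i})\,\mathfrak{D}_{n}^{tG}(b,\xi;a,\lambda^{(n)})$. These generate the trigonometric Gaudin Bethe algebra $\mathcal{B}^{tG}_{n}(b,\xi;a,\lambda^{(n)})$, since the prefactor is invertible in $\C((u^{-1}))$. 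Rescaling by powers of $r$ does not change the $\mathbb{P}_{r}$-algebra generated. So $\overline{\mathcal{B}}_{n}(b,w;a,\lambda^{(n)})$ is a commutative $\mathbb{P}_{r}$-algebra containing a family of commuting operators, regular at $r=\infty$, whose value at $r^{-1}=0$ generates $\mathcal{B}^{tG}_{n}$.

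The key input is that $\mathcal{B}^{tG}_{n}(b,\xi;a,\lambda^{(n)})$ has simple spectrum on $S^{(n)}(b)[a]$ for $(\xi,\lambda^{(n)})$ in a nonempty open subset $U$ of $\C^{n+m}$. By that we mean one can choose a single element $H$, a generic linear combination of the generators, that is diagonalizable with pairwise distinct eigenvalues. Simplicity of the spectrum is an open condition: the discriminant of the characteristic polynomial of $H$ is nonzero. It therefore suffices to exhibit one point, or a limit, where it holds. For this I would use a standard degeneration of the trigonometric Gaudin model. One option is to send $\xi_{j}$ far apart, $\xi_{j}=s\,\xi_{j}^{0}$ with $s\to\infty$. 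Then the leading Hamiltonians reduce, up to lower-order terms, to Cartan-type operators and Casimirs acting on the factors. For generic $\lambda^{(n)}$ their joint spectrum on $S^{(n)}(b)[a]$ is simple, because each $S^{b_j}\C^{n}$ is multiplicity-free. Alternatively, one can cite the known simplicity of spectrum for trigonometric Gaudin models with generic parameters (\cite{MR}, \cite{IKR}). I expect this to be the main obstacle. One has to check genuinely that the eigenvalues separate, for instance via the dynamical Hamiltonians, whose leading part is $\sum_j \xi_j\, e_{jj}$-like terms on the $\glm$ side together with $\lambda^{(n)}$-dependent diagonal terms.

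Given such a point, take $H=\sum c_{ij}\,r^{-(m-i)}C_{ij}$ with the same generic coefficients $c_{ij}$. Then $H=H_{0}+O(r^{-1})$, where $H_{0}$ has $N=\dim S^{(n)}(b)[a]$ distinct eigenvalues. Its characteristic polynomial over $\C[[r^{-1}]]$ reduces modulo $r^{-1}$ to a polynomial with simple roots. By Hensel's lemma it splits into distinct linear factors over $\C[[r^{-1}]]\subset\mathbb{P}_{r}$, so in fact no Puiseux extension is needed at this step. Hence $H$ has $N$ distinct eigenvalues in $\mathbb{P}_{r}$ and is diagonalizable with one-dimensional eigenspaces. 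Every element of the commutative algebra $\overline{\mathcal{B}}_{n}(b,w;a,\lambda^{(n)})$ commutes with $H$, so it preserves each eigenline and is diagonal in the same basis. This yields the simultaneous diagonalization with one-dimensional eigenspaces for all $(\xi,\lambda^{(n)})\in U$, and $U$ is open because the discriminant of $H_{0}$ depends continuously on $(\xi,\lambda^{(n)})$.
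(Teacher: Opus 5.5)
Your argument is correct once the simple-spectrum input for $\mathcal{B}^{tG}_{n}(b,\xi;a,\lambda^{(n)})$ is taken from \cite{IKR} (Theorem 1.3 there), which is what the paper does. The difference lies only in how simplicity is transported to $\overline{\mathcal{B}}_{n}(b,w;a,\lambda^{(n)})$.

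The paper works with the whole algebra. It splits $S^{(n)}(b)[a]\otimes\mathbb{P}_{r}$ into generalized eigenspaces of $\overline{\mathcal{B}}_{n}$. It notes that the leading term of a generalized eigenvector is a generalized eigenvector of the $gr_{m-i}(C_{ij})$, hence of $\mathcal{B}^{tG}_{n}$ by Proposition \ref{tG}. It concludes that generalized eigenspaces upstairs are no larger than downstairs. This is a semicontinuity statement that holds without any simplicity assumption. It does, however, tacitly use that the leading terms of a $k$-dimensional generalized eigenspace span a $k$-dimensional space lying in a single generalized eigenspace of $\mathcal{B}^{tG}_{n}$.

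You instead lift one generic element $H=H_{0}+O(r^{-1})$ and apply Hensel's lemma over $\C[[r^{-1}]]$. This is more explicit and shows the eigenvalues already lie in $\C[[r^{-1}]]$, so no Puiseux extension is needed. Via the discriminant of $H_{0}$, it also shows that the good parameter set is Zariski open in $\C^{n+m}$. That smooths over the paper's passage from an open subset of $\R^{n+m}$ in \cite{IKR} to $\C^{n+m}$ in the statement.

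Your first suggested way of producing a simple-spectrum point should be discarded. The trigonometric Gaudin model is invariant under the uniform rescaling $\xi\mapsto s\xi$. The Hamiltonians $\overline{H}^{(n)}_{i}(z,\xi)$ depend on $\xi$ only through $\xi_{i}/(\xi_{i}-\xi_{k})$. Likewise, the operators $\overline{\mathfrak{D}}_{ij}$ from the proof of Theorem \ref{Hamiltonians through coefficients} are invariant under $u\mapsto su$, $\xi\mapsto s\xi$. So $\xi=s\xi^{0}$, $s\to\infty$, degenerates nothing, and multiplicity-freeness of $S^{b_{j}}\C^{n}$ is not what separates eigenvalues.

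If you want a self-contained substitute for the citation, degenerate in $\lambda^{(n)}$, equivalently in $z$. Take $z=tz^{0}$ with $z^{0}_{1},\dots,z^{0}_{n}$ linearly independent over $\Z$ and let $t\to\infty$. The dominant part of $\overline{H}^{(n)}_{i}(z,\xi)$ is then $t\sum_{j}z^{0}_{j}(e^{(n)}_{jj})_{(i)}$. Its joint spectrum separates the basis vectors $v^{(n)}_{K}$, and the $\overline{H}^{(n)}_{i}$ lie in $\mathcal{B}^{tG}_{n}$ by Theorem \ref{Hamiltonians through coefficients}. This is the argument the paper itself uses in the proof of Lemma \ref{eigenvectors match}.
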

\begin{proof}
Since $\mathbb{P}_{r}$ is algebraically closed, we can decompose $S^{(n)}(b)[a]\otimes\mathbb{P}_{r}$ into direct sum of generalized eigenspaces of $\overline{\mathcal{B}}_{n}(b,w;a,\lambda^{(n)})$ over $\mathbb{P}_{r}$. It is easy to check that if $v$ is a generalized eigenvector of $\overline{\mathcal{B}}_{n}(b,w;a,\lambda^{(n)})$, then $gr (v)$ is a common generalized eigenvector of $gr_{m-i}(C_{ij})$, $i=1\lc m$, $j=1\lc n$. Therefore, by Proposition \ref{tG}, $gr (v)$ is a generalized eigenvector of $\mathcal{B}_{n}^{tG}(b,\xi;a,\lambda^{(n)})$. This implies that the dimension of the largest generalized eigenspace of $\overline{\mathcal{B}}_{n}(b,w;a,\lambda^{(n)})$ is less or equal to the dimension of the largest generalized eigenspace of $\mathcal{B}_{n}^{tG}(b,\xi;a,\lambda^{(n)})$. (It can be less since two different eigenvalues can be "glued together" under degeneration). 

By \cite[Theorem 1.3]{IKR}, there exists an open subset $U$ of $\R^{n+m}$, such that for any $(\xi,\lambda^{(n)})\in U$, the algebra $\mathcal{B}_{n}^{tG}(b,\xi;a,\lambda^{(n)})$ is diagonalizable with one-dimensional eigenspaces, or, equivalently, has one-dimensional generalized eigenspaces. Therefore, it follows that for $(\xi,\lambda^{(n)})\in U$, $\overline{\mathcal{B}}_{n}(b,w;a,\lambda^{(n)})$ has one-dimensional generalized eigenspaces, and the lemma is proved.
\end{proof}

We are now ready to define the modification of the map $\Psi_{\lambda}$, which preserves the degree. Denote $N=\dim S^{(n)}(b)[a]$. Assume that $(\xi,\lambda^{(n)})\in U$. By Lemma \ref{diagonalization}, there exists a $\mathbb{P}_{r}$-basis $\{v_{1}\lc v_{N}\}$ of $S^{(n)}(b)[a]\otimes\mathbb{P}_{r}$ in which any element of $\overline{\mathcal{B}}_{n}(b,w;a,\lambda^{(n)})$ is diagonal. We can assume that $\deg (v_{i})=0$ for any $i=1\lc N$. Notice that by Theorem \ref{duality rep},  $\{\Psi_{\lambda}^{-1}(v_{1})\lc \Psi_{\lambda}^{-1}(v_{N})\}$ is a basis of $S^{(m)}(a)[b]\otimes\mathbb{P}_{r}$, which diagonalizes $\overline{\mathcal{B}}_{m}(a,z;b,\lambda^{(m)})$. For each $i=1\lc N$, denote $d_{i}=\deg \Psi_{\lambda}^{-1}(v_{i})$. Then we define a $\mathbb{P}_{r}$-linear map $\Psi_{(\lambda)}: S^{(m)}(a)[b]\otimes\mathbb{P}_{r}\rightarrow S^{(n)}(b)[a]\otimes\mathbb{P}_{r}$ as follows: $\Psi_{(\lambda)}(r^{-d_{i}}\Psi_{\lambda}^{-1}(v_{i}))=v_{i}$, for any $i=1\lc N$. It follows from the construction that $\Psi_{(\lambda)}$ preserves the degree and that for any $X\in\overline{\mathcal{B}}_{m}(a,z;b,\lambda^{(m)})$, we have $\widehat{\Psi}_{(\lambda)}(X)=\widehat{\Psi}_{\lambda}(X)$. 

For any $i=1\lc m$, $j=1\lc n$, by the definition of the elements $C_{ij}$ and $D_{ij}$, see formulas \eqref{C ij} and \eqref{D ij}, respectively, and also by relations \eqref{rel 4.1} and \eqref{duality explicit}, we have $\widehat{\Psi}_{\lambda}(D_{ij})=C_{ij}$. Therefore, $\widehat{\Psi}_{(\lambda)}(D_{ij})=C_{ij}$, and, applying Lemma \ref{lemma3} to $\widehat{\Psi}_{(\lambda)}$, we get
\begin{equation}\label{deg duality}
\widehat{\Psi^{gr}_{(\lambda)}}(gr_{m-i}\,D_{ij})=gr_{m-i}\,C_{ij}.
\end{equation}

Finally, since we do not need the variable $r$ anymore, let us put $r =1$ in any element of any vector space over $\mathbb{P}_{r}$ that we had above, where it makes sense. In particular, $\Psi^{gr}_{(\lambda)}$, $gr_{m-i}\,C_{ij}$, and $gr_{m-i}\,D_{ij}$ become elements of $\Hom(S^{(m)}(a)[b],S^{(n)}(b)[a])$, $\End (S^{(n)}(b)[a])$, and $\End (S^{(m)}(a)[b])$, respectively, while the algebras  $\mathcal{B}_{m}^{XXX}(a,\lambda^{(n)};b,\xi)$ and $\mathcal{B}_{n}^{tG}(b,\xi;a,\lambda^{(n)})$ become the subalgebras of $\End (S^{(m)}(a)[b])$ and $\End (S^{(n)}(b)[a])$, respectively, as it was originally in the formulation of Theorem \ref{main3}. 
\subsection{Trigonometric Gaudin and dynamical Hamiltonians}
For each $i=1\lc m$ and an element $g\in\gln$, denote $(g)_{(i)}=1\otimes\dots\otimes 1\otimes g\otimes 1\dots\otimes 1\in\Un^{\otimes m}$, where $g$ is on the $i$-th place. Recall that $e_{ij}^{(n)}$, $i,j=1\lc n$, are the standard generators of $\Un$. We will also identify $\Un$ with its image in $\Un^{\otimes m}$ under the homomorphism $g\mapsto \sum_{i=1}^{m}(g)_{(i)}$, $g\in\gln$. 

Assume that $\xi_{i}\neq\xi_{k}$ for $i\neq k$. Introduce the trigonometric Gaudin Hamiltonians $H^{(n)}_{i}(z,\xi)\in \Un^{\otimes m}$, $i=1\lc m$ (see \cite{J}):
\[H^{(n)}_{i}(z,\xi)=\sum_{j=1}^{n}\left(z_{j}+n-\frac{e_{jj}^{(n)}}{2}\right)(e_{jj}^{(n)})_{(i)}+\sum_{\substack{k=1\\k\neq i}}^{m}\frac{\xi_{i}\Omega^{+}_{ik}+\xi_{k}\Omega^{-}_{ik}}{\xi_{i}-\xi_{k}},\]
where
\[\Omega^{+}_{ik}=\frac{1}{2}\sum_{j=1}^{n}(e_{jj}^{(n)})_{(i)}(e_{jj}^{(n)})_{(k)}+\sum_{1\leq j<l\leq n}(e_{jl}^{(n)})_{(i)}(e_{lj}^{(n)})_{(k)}\]
and 
\[\Omega^{-}_{ik}=\frac{1}{2}\sum_{j=1}^{n}(e_{jj}^{(n)})_{(i)}(e_{jj}^{(n)})_{(k)}+\sum_{1\leq j<l\leq n}(e_{lj}^{(n)})_{(i)}(e_{jl}^{(n)})_{(k)}.\]

Using the similar notation, introduce the XXX dynamical Hamiltonians $G^{(m)}_{i}(z,\xi)\in \Um^{\otimes n}$, $i=1\lc m$ (see \cite{TV2}):
\begin{equation*}
\begin{split}    
G^{(m)}_{i}(z,\xi)=-\frac{(e^{(m)}_{ii})^{2}}{2}+\sum_{j=1}^{n}(z_{i}+n)(e_{ii}^{(m)})_{(j)} & +\sum_{j=1}^{m}\sum_{1\leq k<l\leq n}(e_{ij}^{(m)})_{(k)}(e_{ji}^{(m)})_{(l)}+\\
& +\sum_{\substack{j=1\\j\neq i}}^{m}\frac{\xi_{j}}{\xi_{i}-\xi_{j}}(e^{(m)}_{ij}e^{(m)}_{ji}-e^{(m)}_{ii}).
\end{split}
\end{equation*}

Algebras $\Un^{\otimes m}$ and $\Um^{\otimes n}$ act on $S^{(n)}(b)[a]$ and $S^{(m)}(a)[b]$, respectively. We denote the corresponding images of $H^{(n)}_{i}(z,\xi)$ and $G^{(m)}_{i}(z,\xi)$ in $\End (S^{(n)}(b)[a])$ and $\End (S^{(m)}(a)[b])$ by $\overline{H}^{(n)}_{i}(z,\xi)$ and $\overline{G}^{(m)}_{i}(z,\xi)$, respectively. 

Recall the map $\Psi$, see formula \eqref{Psi}. It was observed more then twenty years ago (see \cite{TV4}) that for each $i=1\lc m$, we have:
\begin{equation}\label{ancient duality}
    \widehat{\Psi}(\overline{G}^{(m)}_{i}(z,\xi))=\overline{H}^{(n)}_{i}(z,\xi).
\end{equation}

Recall the elements $gr_{m-i}C_{ij}\in\End (S^{(n)}(b)[a])$ and $gr_{m-i}D_{ij}\in\End (S^{(m)}(a)[b])$ from the previous section. For each $i=1\lc m$, denote 
\[C_{i}(u)=\frac{\sum_{j=1}^{n}(gr_{m-i}C_{ij})u^{j}}{\prod_{k=1}^{m}(u-\xi_{k})}, \quad D_{i}(u)=\frac{\sum_{j=1}^{n}(gr_{m-i}D_{ij})u^{j}}{\prod_{k=1}^{m}(u-\xi_{k})}.\]

\begin{thm}\label{Hamiltonians through coefficients}
    For each $i=1\lc m$, the following relations hold:
    \begin{equation}\label{trig Gaudin Hamilt}
    \overline{H}^{(n)}_{i}(z,\xi)=\frac{1}{\xi_{i}}\Res_{u=\xi_{i}}\left(\frac{1}{2}C^{2}_{1}(u)-C_{2}(u)\right)-\frac{1}{2}b^{2}_{i},
    \end{equation}
    \begin{equation}\label{dynamical Hamilt}
    \overline{G}^{(m)}_{i}(z,\xi)=\frac{1}{\xi_{i}}\Res_{u=\xi_{i}}\left(\frac{1}{2}D^{2}_{1}(u)-D_{2}(u)\right)-\frac{1}{2}b^{2}_{i}.
    \end{equation}
\end{thm}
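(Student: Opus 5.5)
We will prove the two relations separately, computing each side from the explicit generating operators identified in Propositions \ref{tG} and \ref{XXX}. The point is that $C_i(u)$ and $D_i(u)$ are, up to normalization, the coefficient series of the limiting differential and difference operators. Concretely, by \eqref{Prop 4.2} and the identity $u^{j}\partial_u^{j}=\prod_{s=0}^{j-1}(u\partial_u-s)$, the coefficient of $(u\partial_u)^{n-1}$ and $(u\partial_u)^{n-2}$ in $u^{n}\prod_k(u-\xi_k)\mathfrak{D}_n^{tG}$ are explicit combinations of the coefficients $\mathfrak{B}_{1}(u),\mathfrak{B}_2(u)$ of $\mathfrak{D}_n$ acting on $(S^{(n)}(b)\otimes M^{(n)}_{\lambda^{(n)}})^{sing}$. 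Here the grading index $i$ in $C_{ij}$ counts powers of $u$ via $(u+m)^i$, so we must first reorganize the sum in \eqref{Prop 4.2} in the form $\sum_j(\dots)(u\partial_u)^j$. We then express $C_1(u)$ as $\operatorname{tr}E(u)$ plus a constant shift, and $C_2(u)$ as the second elementary (column-determinant) combination, where
\[E(u)=\sum_{k}\frac{u\,e_{ij}^{(k)}}{u-\xi_k}+e_{ij}^{M}.\]
This comes from expanding $\mathfrak{D}_n$ with $e_{ij}(u)=\sum_k e^{(k)}_{ij}/(u-\xi_k)+e_{ij}^{M}/u$.

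For \eqref{trig Gaudin Hamilt}, we compute $\frac12C_1^2-C_2$, which is essentially $\frac12\operatorname{tr}E(u)^2$ plus a derivative correction coming from the $\partial_u$ terms in the column determinant. Its residue at $u=\xi_i$ picks up the following pieces:
\begin{itemize}
\item the quadratic Casimir cross terms $\sum_{k\ne i}\frac{\xi_i}{\xi_i-\xi_k}(\text{split Casimir})_{ik}$;
\item the terms pairing site $i$ with the Verma factor $e^{M}$;
\item a double-pole contribution giving $\frac12 C^{(i)}_2\sim\frac12 b_i^2$ on $S^{b_i}\C^n$.
\end{itemize}
We then use the singularity condition, via the identification $\theta$, to replace $e^{M}_{jl}$ for $j<l$ by $-\sum_k e^{(k)}_{jl}$ and $e^{M}_{jj}$ by $\lambda^{(n)}_j$. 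This converts the symmetric Casimir $\Omega=\Omega^++\Omega^-$ with weights $\xi_i/(\xi_i-\xi_k)$ into the asymmetric $\xi_i\Omega^++\xi_k\Omega^-$ combination. It also produces the diagonal term $\sum_j(z_j+n-e_{jj}/2)(e_{jj})_{(i)}$ after substituting \eqref{z w} and $\nu$. The constant $-\frac12b_i^2$ and the factor $1/\xi_i$ should come out of this bookkeeping. This is essentially the known derivation of trigonometric Gaudin Hamiltonians from the universal Bethe algebra (cf.\ \cite{IKR}), and we would follow it.

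For \eqref{dynamical Hamilt}, we use \eqref{gr D ij}: $D_i(u)$ are the coefficients of $\prod(u-z_i)D^{XXX}_m$ expanded in $(u+n)^{\underline j}\tau^{i}$, divided by $\prod(u-\xi_k)$. Note the roles are swapped: the poles at $\xi_k$ arise after rewriting in the dual variable. The cleanest route is to avoid a second direct computation. By \eqref{deg duality}, $\widehat{\Psi^{gr}_{(\lambda)}}(gr_{m-i}D_{ij})=gr_{m-i}C_{ij}$, so $\widehat{\Psi^{gr}_{(\lambda)}}$ maps the right side of \eqref{dynamical Hamilt} to that of \eqref{trig Gaudin Hamilt}. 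However, this yields $\overline G$ only if we already know $\widehat{\Psi^{gr}_{(\lambda)}}$ agrees with $\widehat\Psi$ on these elements, and that is presumably what this theorem is used to establish afterwards. So we must compute directly. We expand $B^{XXX}_1(u),B^{XXX}_2(u)$ via \eqref{B=t} as $\sum_j\xi_j t_{jj}(u)$ and $\sum_{j<l}\xi_j\xi_l t_{\{j,l\}}(u)$ on $S^{(m)}(a,z)$. In these, $t_{jl}(u)=\prod(\cdots)$ is a product of evaluation $L$-operators with poles at $u=z_k$. We then re-expand in the basis $(u+n)^{\underline j}\tau^i$ and read off $D_1,D_2$ as polynomials in $u$ of degree $\le n$ with matrix coefficients. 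Finally, $\frac12D_1^2-D_2$ evaluated as a residue at $\xi_i$ is just a polynomial evaluation divided by $\prod_{k\ne i}(\xi_i-\xi_k)$. Matching with $G^{(m)}_i$ then requires the identity $t_{\{j,l\}}$ giving $e_{jl}e_{lj}$ terms and sums $\sum_{k<l}(e_{ij})_{(k)}(e_{ji})_{(l)}$ from the ordered coproduct.

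The main obstacle is the XXX side: correctly tracking the reordering from $\tau$-powers to the basis $(u+n)^{\underline j}\tau^i$, together with the quantum-minor corrections. These produce exactly the terms $-\frac12(e_{ii})^2$ and $-e_{ii}$ in $G_i$. I would first test both formulas for $n=1$ or $m=1$, where everything is scalar, to fix normalizations and the constant $-\frac12b_i^2$, and then carry out the general computation.
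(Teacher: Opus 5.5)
Your treatment of \eqref{trig Gaudin Hamilt} follows the paper's route. You write $u\,e_{ij}(u)$ on $S_U^{(n)}(b,\xi,\lambda^{(n)})$ as $\sum_k u\,e^{(k)}_{ij}/(u-\xi_k)+e^M_{ij}$, reduce by the singular-vector relations for the Verma factor, and take residues at $u=\xi_i$ of the coefficients of $(u\partial_u)^{n-1}$ and $(u\partial_u)^{n-2}$; the paper also leaves this as a direct check. You are right that \eqref{dynamical Hamilt} cannot be obtained through \eqref{deg duality}, since that presupposes Proposition \ref{Psi match}.

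\textbf{The gap is in your direct XXX computation.} You build $D_1,D_2$ from the series $B^{XXX}_1(u)=\sum_j\xi_jt_{jj}(u)$ and $B^{XXX}_2(u)=\sum_{j<l}\xi_j\xi_lt_{\{j,l\}}(u)$. These are coefficients of fixed powers of $\tau$, viewed as polynomials of degree $\le n$ in the spectral variable, and you then take residues at $u=\xi_i$. The operator's $u$-dependence has no singularity at $\xi_i$, so this cannot produce $\overline G^{(m)}_i$. Already for $n=1$, $m\ge 2$, the residue of $\frac12D_1^2$ at the double pole $u=\xi_i$ contains a $z_1^2$-term, whereas $\overline G^{(m)}_i$ is linear in $z$.

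It is also inconsistent with your own Gaudin half. There you (correctly) fix the power of $u\partial_u$ and sum over powers of $u$. Under \eqref{deg duality}, which trades $u\partial_u\leftrightarrow u$ and $u\leftrightarrow\tau$, the matching $D_k$ must fix the power of $u$ and sum over powers of $\tau$. This common recipe is exactly what Corollary \ref{deg duality for Hamiltonians} needs, and also your own remark that $\widehat{\Psi^{gr}_{(\lambda)}}$ maps one right-hand side to the other.

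So $D_1,D_2$ are the first two subleading coefficients in $u$ of $\prod_k(u-z_k)D^{XXX}_m$. Each is viewed as a polynomial of degree $\le m$ in $\tau$ and divided by $\prod_l(\tau-\xi_l)$, and the residue is taken in this dual variable. What you need is the $u^{-1}$ and $u^{-2}$ coefficients of every $B^{XXX}_k(u)=\sum_{|J|=k}\prod_{j\in J}\xi_j\,t_J(u)$, $k=0,\ldots,m$, not the two full series $B^{XXX}_1,B^{XXX}_2$.

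The poles at $\tau=\xi_j$ then come from identities such as $\sum_k(-1)^k\tau^{m-k}\sum_{|J|=k}\prod_{j\in J}\xi_j\sum_{j\in J}x_j=-\prod_l(\tau-\xi_l)\sum_j\xi_jx_j/(\tau-\xi_j)$ and its analogue for pairs $\{j,l\}\subset J$. The cross terms give the $\frac{\xi_j}{\xi_i-\xi_j}$-part of $G^{(m)}_i$. The coefficient $t^{(2)}_{ii}$ gives the terms linear in $z$ and the coproduct terms $\sum_{k<l}\sum_j(e_{ij})_{(k)}(e_{ji})_{(l)}$. This is what Proposition B.1 of \cite{MTV6}, which the paper simply cites, packages. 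Set up this way, your direct computation would be a legitimate if longer alternative; as proposed, it computes the wrong objects.
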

\begin{proof}
    Given that by Proposition \ref{XXX}, the elements $gr_{m-i}(D_{ij})$, $i=1\lc m$, $j=1\lc n$, are the coefficients of the difference operator $\prod_{i=1}^{n}(u-z_{i})\,\,D_{m}^{XXX}(a,\lambda^{(n)};b,\xi)$, relation \eqref{dynamical Hamilt} can be easily deduced from Proposition B.1 in \cite{MTV6}.

    Let us prove relation \eqref{trig Gaudin Hamilt}. For each $i,j=1\lc n$, define differential operators $\bar{\mathfrak{D}}_{ij}\in\mathfrak{D}(\Un^{\otimes m+1})$ as follows:
    \[\overline{\mathfrak{D}}_{ij}=\delta_{ij}(u\partial_{u}-n+j)-\sum_{l=1}^{m}\frac{u(e_{ij}^{(n)})_{(l)}}{u-\xi_{l}}-(e_{ij}^{(n)})_{(m+1)}.\]
    Also, denote 
    \[\overline{\mathfrak{D}}_{n}= \sum_{\sigma\in S_{n}}(-1)^{\sigma}\overline{\mathfrak{D}}_{\sigma (1),1}\overline{\mathfrak{D}}_{\sigma (2), 2}\dots \overline{\mathfrak{D}}_{\sigma (n), n}.\]

    Recall the differential operator $\mathfrak{D}_{n}$ and the $U(\gln [t])$-module $S_{U}^{(n)}(b,\xi,\lambda^{(n)})$ (see formulas \eqref{mathfrak D} and \eqref{S_U}, respectively). It is easy to check that for any $v\in S_{U}^{(n)}(b,\xi,\lambda^{(n)})$, we have $u^{n}\mathfrak{D}_{n}\cdot v=\overline{\mathfrak{D}}_{n}\cdot v$.
    
    Assume that $v\in \bigl(S^{(n)}(b)\otimes M_{\lambda^{(n)}}^{(n)}\bigr)^{sing}[\lambda^{(n)}+a]$. Then $\theta^{S^{(n)}(b)}_{\lambda^{(n)},a}\bigl(\overline{\mathfrak{D}}_{n}\cdot v\bigl) = \widetilde{\mathfrak{D}}_{n}\cdot \theta^{S^{(n)}(b)}_{\lambda^{(n)},a}(v)$, where the differential operator $\widetilde{\mathfrak{D}}_{n}\in \mathfrak{D}$$(\Un^{\otimes m})$ is obtained from $\overline{\mathfrak{D}}_{n}$ as follows. Let $\overline{\Un^{\otimes m+1}}$ be the quotient of $\Un^{\otimes m+1}$ by the left ideal generated by $(e^{(n)}_{ij})_{(m+1)}$ for all $i>j$, $(e^{(n)}_{ij})_{(m+1)}+\sum_{l=1}^{m}(e_{ij}^{(n)})_{(l)}$ for all $i<j$, and $(e^{(n)}_{ii})_{(m+1)}-\lambda_{i}$ for all $i$. Then the inclusion into the first $m$ factors $\Un^{\otimes m}\hookrightarrow \Un^{\otimes m+1}$ composed with projection $p_{m}:\Un^{\otimes m+1}\rightarrow \overline{\Un^{\otimes m+1}}$ gives the isomorphism $q_{m}:\Un^{\otimes m}\rightarrow \overline{\Un^{\otimes m+1}}$. Then the operator $\widetilde{\mathfrak{D}}_{n}$ is obtained by applying the homomorphism $q_{m}^{-1}\circ p_{m}: \Un^{\otimes m+1}\rightarrow \Un^{\otimes m}$ to each coefficient of $\overline{\mathfrak{D}}_{n}$.  

    Let $\beta_{i}(u)\in \bigl(\Un^{\otimes m}\bigr)((u^{-1}))$, $i=0\lc n$, be the following coefficients of the differential operator $\widetilde{\mathfrak{D}}_{n}$:
    \[\widetilde{\mathfrak{D}}_{n}=\sum_{i=1}^{n}\beta_{i}(u)(u\partial_{u})^{n-i},\]
    and let $\overline{\beta}_{i}(u)$, $i=0\lc n$ be the images of $\beta_{i}(u)$ in $\bigl(\End S^{(n)}(b)[a]\bigr)((u^{-1}))$. By a lengthy but straightforward computation, using the procedure of obtaining $\widetilde{\mathfrak{D}}_{n}$ described above, we computed $\overline{\beta}_{1}(u)$ and $\overline{\beta}_{2}(u)$ explicitly and checked that for each $i=1\lc m$, we have
    \[\frac{1}{\xi_{i}}\Res_{u=\xi_{i}}\left(\frac{1}{2}\overline{\beta}^{2}_{1}(u)-\overline{\beta}_{2}(u)\right)=\overline{H}^{(n)}_{i}(z,\xi)+\frac{1}{2}b^{2}_{i}.\]

    By the definition of $\mathfrak{D}_{n}^{tG}(b,\xi;a,\lambda^{(n)})$ and the construction of $\overline{\beta}_{i}(u)$, we have $u^{n}\mathfrak{D}_{n}^{tG}(b,\xi;a,\lambda^{(n)})=\sum_{i=1}^{n}\overline{\beta}_{i}(u)(u\partial_{u})^{n-i}$. Comparing the last formula with formula \eqref{Prop 4.2}, we get
    \[\frac{1}{\xi_{i}}\Res_{u=\xi_{i}}\left(\frac{1}{2}\overline{\beta}^{2}_{1}(u)-\overline{\beta}_{2}(u)\right)=\frac{1}{\xi_{i}}\Res_{u=\xi_{i}}\left(\frac{1}{2}C^{2}_{1}(u)-C_{2}(u)\right),\]
    and the theorem is proved.
\end{proof}

Recall the map $\Psi^{gr}_{(\lambda)}$ introduced for each $(\xi,\lambda^{(n)})$ in some open subset $U\in\C^{n+m}$ at the end of the previous subsection. Combining Theorem \ref{Hamiltonians through coefficients} and relation \eqref{deg duality}, we get the following corollary:
\begin{cor}\label{deg duality for Hamiltonians}
    For each $(\xi,\lambda^{(n)})\in U$ and $i=1\lc m$, we have
    \[\widehat{\Psi^{gr}_{(\lambda)}}\bigl(\overline{G}^{(m)}_{i}(z,\xi)\bigr) = \overline{H}^{(n)}_{i}(z,\xi).\]
\end{cor}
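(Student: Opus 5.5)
The corollary should follow by feeding the residue formulas of Theorem \ref{Hamiltonians through coefficients} through the conjugation identity \eqref{deg duality}. The map $\widehat{\Psi^{gr}_{(\lambda)}}$ is an algebra isomorphism $\End (S^{(m)}(a)[b])\rightarrow \End (S^{(n)}(b)[a])$, and it is $\C$-linear in the coefficients of polynomials and rational functions in $u$. So if we extend it coefficientwise to $\End(\cdot)\otimes\C(u)$, it commutes with multiplication, with squaring, with division by $\prod_{k}(u-\xi_{k})$, and with taking residues at $u=\xi_{i}$.

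\textbf{Step 1.} Relation \eqref{deg duality} holds for $i=1\lc m$, $j=1\lc n$; note that $\widehat{\Psi}_{\lambda}(D_{ij})=C_{ij}$ is in fact valid for all indices. Applying $\widehat{\Psi^{gr}_{(\lambda)}}$ coefficientwise to
\[D_{i}(u)=\frac{\sum_{j}(gr_{m-i}D_{ij})u^{j}}{\prod_{k}(u-\xi_{k})}\]
then gives $\widehat{\Psi^{gr}_{(\lambda)}}(D_{i}(u))=C_{i}(u)$ for $i=1,2$. Since the map is an algebra homomorphism, we also get $\widehat{\Psi^{gr}_{(\lambda)}}(D_{1}(u)^{2})=C_{1}(u)^{2}$.

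\textbf{Step 2.} Take $\frac{1}{\xi_{i}}\Res_{u=\xi_{i}}$ of $\frac12 D_{1}^{2}-D_{2}$. Because this operation is linear on coefficients, it commutes with $\widehat{\Psi^{gr}_{(\lambda)}}$. The scalar $\frac12 b_{i}^{2}$ is a multiple of the identity, so it is fixed by the map. Combining \eqref{dynamical Hamilt} and \eqref{trig Gaudin Hamilt} then yields the claim.

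\textbf{Main obstacle.} The one point needing care is that $gr_{m-i}C_{ij}$, $gr_{m-i}D_{ij}$ and $\Psi^{gr}_{(\lambda)}$ are all specialized at $r=1$ at the end of the previous subsection. I expect to handle this by checking that the specialization is compatible: the graded pieces are homogeneous of a fixed degree in $r$, so setting $r=1$ is just a rescaling. Rescaling a homogeneous identity such as \eqref{deg duality} does not affect its validity, because the conjugating map $\Psi^{gr}_{(\lambda)}$ has degree $0$.
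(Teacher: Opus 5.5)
Your proposal is correct and follows the paper's own one-line argument: the paper obtains the corollary simply by transporting the residue formulas \eqref{dynamical Hamilt} and \eqref{trig Gaudin Hamilt} through relation \eqref{deg duality}, using that $\widehat{\Psi^{gr}_{(\lambda)}}$, extended coefficientwise in $u$, is an algebra isomorphism that commutes with residues and fixes scalars. Your extra checks make explicit what the paper leaves implicit: that \eqref{deg duality} is available for all index pairs (since $\widehat{\Psi}_{\lambda}(D_{ij})=C_{ij}$ and the bounds $\deg C_{ij},\deg D_{ij}\leq m-i$ hold for all $i,j$), and that setting $r=1$ is harmless because $\Psi^{gr}_{(\lambda)}$ has degree $0$ while $gr_{m-i}C_{ij}$ and $gr_{m-i}D_{ij}$ are both homogeneous of degree $m-i$.
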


Now, comparing the last formula with formula \eqref{ancient duality}, we can prove the following.
\begin{lem}\label{eigenvectors match}
There exists an open subset $\widetilde{U}$ of $U$ such that for every $(\xi,\lambda^{(n)})\in\widetilde{U}$, the following holds:
    \begin{enumerate}
        \item Let $v\in S^{(m)}(a)[b]$ be a common eigenvector of the algebra $\mathcal{B}_{m}^{XXX}(a,\lambda^{(n)};b,\xi)$. Then $\Psi (v) = \varkappa\, \Psi^{gr}_{(\lambda)} (v)$ for some nonzero $\varkappa\in\C$.
        \item Let $w\in S^{(n)}(b)[a]$ be a common eigenvector of the algebra $\mathcal{B}_{n}^{tG}(b,\xi;a,\lambda^{(n)})$. Then $\Psi^{-1} (w) = \widetilde{\varkappa}\, (\Psi^{gr}_{(\lambda)})^{-1} (w)$ for some nonzero $\widetilde{\varkappa}\in\C$.
    \end{enumerate}
\end{lem}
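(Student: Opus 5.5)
We will compare the two isomorphisms $\Psi$ and $\Psi^{gr}_{(\lambda)}$ through their action on the dynamical Hamiltonians. By \eqref{ancient duality} and Corollary \ref{deg duality for Hamiltonians}, both $\widehat{\Psi}$ and $\widehat{\Psi^{gr}_{(\lambda)}}$ send $\overline{G}^{(m)}_{i}(z,\xi)$ to $\overline{H}^{(n)}_{i}(z,\xi)$ for every $i=1\lc m$. Hence the automorphism $T=\Psi^{-1}\circ\Psi^{gr}_{(\lambda)}$ of $S^{(m)}(a)[b]$ commutes with every $\overline{G}^{(m)}_{i}(z,\xi)$. The strategy is to show that, for generic parameters, the operators $\overline{G}^{(m)}_{i}(z,\xi)$ have simple joint spectrum. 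Then $T$ preserves every joint eigenline of the $\overline{G}^{(m)}_{i}$, that is, $T$ is diagonal in the eigenbasis.

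The key steps are as follows.

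\textbf{Step 1.} We show that the operators $\overline{G}^{(m)}_{i}(z,\xi)$, $i=1\lc m$, lie in $\mathcal{B}_{m}^{XXX}(a,\lambda^{(n)};b,\xi)$. This follows from \eqref{dynamical Hamilt} together with Proposition \ref{XXX}, since the right-hand side of \eqref{dynamical Hamilt} is built from coefficients of $D^{XXX}_m$. Similarly, by \eqref{trig Gaudin Hamilt} and Proposition \ref{tG}, the operators $\overline{H}^{(n)}_{i}$ lie in $\mathcal{B}_{n}^{tG}$.

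\textbf{Step 2 (main obstacle).} We find a nonempty open $\widetilde U\subset U$ on which the operators $\overline{G}^{(m)}_{i}(z,\xi)$ are jointly diagonalizable with one-dimensional joint eigenspaces. The Hamiltonians alone, rather than the full Bethe algebra, must separate the spectrum. Simplicity of joint spectrum is a Zariski-open condition in $(\xi,\lambda^{(n)})$: it amounts to nonvanishing of a discriminant of a generic linear combination $\sum c_i\overline{G}_i$. It therefore suffices to exhibit one point where it holds, and the resulting set is then dense and open, so it meets $U$. I would look for such a point in an asymptotic regime. One option is $\xi_1\gg\xi_2\gg\dots\gg\xi_m$ with generic $z$, where the $\overline{G}_i$ reduce to essentially triangular operators: diagonal parts $\sum_j(z_j+n)(e_{ii})_{(j)}$ plus nilpotent-type corrections. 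Their diagonal entries on the monomial basis $v_K^{(m)}$ are $\sum_j (z_j+n)k_{ji}$ up to constants, and these separate distinct matrices $K$ when the $z_j$ are generic. Alternatively, one can transport the question via $\widehat\Psi$ and use \cite[Theorem 1.3]{IKR} for the trigonometric Gaudin Hamiltonians directly, if that theorem is stated for the Hamiltonians $\overline{H}_i$ themselves.

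\textbf{Step 3.} We conclude the statement. Let $v$ be a common eigenvector of $\mathcal{B}_{m}^{XXX}$. By Step 1, $v$ is a joint eigenvector of the $\overline{G}_i$, and by Step 2 it spans a joint eigenline. The map $T$ commutes with all $\overline{G}_i$, so $T$ preserves this eigenline and $T v=\varkappa v$, where $\varkappa\neq0$ because $T$ is invertible. Thus $\Psi^{gr}_{(\lambda)}(v)=\varkappa\Psi(v)$, which after renaming the constant is part (1). Part (2) follows in the same way: a common eigenvector $w$ of $\mathcal{B}_{n}^{tG}$ is a joint eigenvector of the $\overline{H}_i$. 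Since $\widehat\Psi$ carries the $\overline{G}_i$ to the $\overline{H}_i$, the joint spectrum of the $\overline{H}_i$ is simple as well. The map $\Psi^{gr}_{(\lambda)}\circ\Psi^{-1}$ commutes with the $\overline{H}_i$ and therefore scales $w$, which gives $\Psi^{-1}(w)=\widetilde\varkappa(\Psi^{gr}_{(\lambda)})^{-1}(w)$.
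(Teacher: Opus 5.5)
Your proposal is correct, and its skeleton is the paper's. The paper also observes that $\Psi(v)$ and $\Psi^{gr}_{(\lambda)}(v)$ are joint eigenvectors of the $\overline{H}^{(n)}_i(z,\xi)$ with the same eigenvalues, by \eqref{ancient duality} and Corollary \ref{deg duality for Hamiltonians}. This is equivalent to your statement that $T=\Psi^{-1}\circ\Psi^{gr}_{(\lambda)}$ commutes with the $\overline{G}^{(m)}_i(z,\xi)$. From there the paper likewise reduces the lemma to simplicity of the joint spectrum of the Hamiltonians, and only that last step differs.

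The paper works on the Gaudin side and lets $z\to\infty$. For $z_1,\dots,z_n$ large and independent over $\Z$, the term $\sum_j z_j(e^{(n)}_{jj})_{(i)}$ dominates $\overline{H}^{(n)}_i$ and is diagonal on the $v^{(n)}_K$ with distinct joint eigenvalues. So nothing needs to be known about the remaining terms, and any fixed $\xi$ with distinct entries works.

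Your hierarchy $\xi_1\gg\dots\gg\xi_m$ with generic (not large) $z$ also works, but the triangularity you assert is the real content and you should verify it. It does hold. First, $\xi_j/(\xi_i-\xi_j)\to 0$ for $j>i$ and $\to -1$ for $j<i$. Second, on $S^{(m)}(a)$ one has $e_{ij}e_{ji}-e_{ii}=\sum_{k\neq l}(e_{ij})_{(k)}(e_{ji})_{(l)}+\sum_k(e_{ii}e_{jj})_{(k)}$. Hence the off-diagonal part of the limit of $\overline{G}^{(m)}_i$ is $\sum_{j>i}\sum_{k<l}(e_{ij})_{(k)}(e_{ji})_{(l)}-\sum_{j<i}\sum_{k>l}(e_{ij})_{(k)}(e_{ji})_{(l)}$. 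Each summand, for some tensor factors $k<l$ and indices $p<q$, replaces one $e_q$ by $e_p$ in the earlier factor and one $e_p$ by $e_q$ in the later one. On $v^{(m)}_K$ this raises $\sum_{r,c} r\,c\,k_{rc}$ by $(l-k)(q-p)>0$. So all the limits are triangular for one common order on the basis $v^{(m)}_K$, with diagonal entries $\sum_k(z_k+n)k_{ki}$ plus $z$-independent terms, and your argument goes through.

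Your discriminant/Zariski-density step is actually more careful than the paper. The paper only asserts that large-$z$ points yield an open $\widetilde U\subset U$; your argument is what guarantees that the good set meets $U$.

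Your second option would not suffice as stated. \cite[Theorem 1.3]{IKR}, as used in Lemma \ref{diagonalization}, gives simple spectrum for the whole algebra $\mathcal{B}^{tG}_n$, not for the $\overline{H}^{(n)}_i$ alone. You do not need it, though.
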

\begin{proof}
    We will prove part (1). Part (2) can be proved similarly. 
    
    Let $v\in S^{(m)}(a)[b]$ be a common eigenvector of the algebra $\mathcal{B}_{m}^{XXX}(a,\lambda^{(n)};b,\xi)$. By Proposition \ref{XXX}, $v$ is a common eigenvector of $gr_{m-i}(D_{ij})$, $i=1\lc m$, $l=1\lc n$. By Theorem \ref{Hamiltonians through coefficients}, this implies that $v$ is a common eigenvector of the dynamical Hamiltonians $\overline{G}^{(m)}_{i}(z,\xi)$, $i=1\lc m$. Denote the corresponding eigenvalues by $g_{i}$, $i=1\lc m$. Then, using Corollary \ref{deg duality for Hamiltonians}, we have
    \[\overline{H}^{(n)}_{i}(z,\xi)\Psi^{gr}_{(\lambda)} (v) = \Psi^{gr}_{(\lambda)} (\overline{G}^{(m)}_{i}(z,\xi)\,v) = g_{i} \Psi^{gr}_{(\lambda)}(v).\]

    Similar computation, but now using formula \eqref{ancient duality} instead of Corollary \ref{deg duality for Hamiltonians}, gives
    \[\overline{H}^{(n)}_{i}(z,\xi)\Psi(v) =  g_{i} \Psi(v).\]
    Therefore, it is left to prove that there exists an open subset $\widetilde{U}$ of $U$ such that for every $(\xi,\lambda^{(n)})\in\widetilde{U}$, common eigenspaces of the trigonometric Gaudin Hamiltonians $\overline{H}^{(n)}_{i}(z,\xi)$, $i=1\lc m$, are one-dimensional. 

    Notice that the images of the elements $(e_{ii}^{(n)})_{(j)}$, $i=1\lc n$, $j=1\lc m$ in $\End (S^{(n)}(b)[a])$ are simultaneously diagonalizable with one-dimensional common eigenspaces and with integer eigenvalues. Thus, if $z_{1}\lc z_{n}$ are linearly independent over $\Z$, the images of the elements $\sum_{i=1}^{n}z_{i}(e_{ii}^{(n)})_{(j)}$, $j=1\lc m$ in $\End (S^{(n)}(b)[a])$ are simultaneously diagonalizable with one-dimensional common eigenspaces. Therefore, the common eigenspaces of the operators $\overline{H}^{(n)}_{i}(z,\xi)$, $i=1\lc m$, are one-dimensional provided that $z_{1}\lc z_{n}$ are sufficiently large positive numbers linearly independent over $\Z$, which implies the existence of $\widetilde{U}$. 
\end{proof}

\begin{prop}\label{Psi match}
    For any $(\xi,\lambda^{(n)})\in\widetilde{U}$ and any $X\in\mathcal{B}_{m}^{XXX}(a,\lambda^{(n)};b,\xi)$, we have $\widehat{\Psi}(X)= \widehat{\Psi^{gr}_{(\lambda)}}(X)$.
\end{prop}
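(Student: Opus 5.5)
The plan is to compare $\widehat{\Psi}$ and $\widehat{\Psi^{gr}_{(\lambda)}}$ on a common eigenbasis. Fix $(\xi,\lambda^{(n)})\in\widetilde{U}$. First we show that $\mathcal{B}_{m}^{XXX}(a,\lambda^{(n)};b,\xi)$ is diagonalizable on $S^{(m)}(a)[b]$. By Proposition \ref{XXX}, this algebra is generated by the elements $gr_{m-i}(D_{ij})$. By \eqref{deg duality}, we have $\widehat{\Psi^{gr}_{(\lambda)}}(gr_{m-i}D_{ij})=gr_{m-i}C_{ij}$, and by Proposition \ref{tG} the elements $gr_{m-i}C_{ij}$ generate $\mathcal{B}_{n}^{tG}(b,\xi;a,\lambda^{(n)})$. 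Hence $\widehat{\Psi^{gr}_{(\lambda)}}$ restricts to an isomorphism $\mathcal{B}_{m}^{XXX}\to\mathcal{B}_{n}^{tG}$. Since $\widetilde{U}\subset U$, the algebra $\mathcal{B}_{n}^{tG}$ is diagonalizable with one-dimensional eigenspaces (this is the input from \cite{IKR} used in Lemma \ref{diagonalization}). Transporting by the isomorphism $\Psi^{gr}_{(\lambda)}$ (Lemma \ref{lemma2}), we get an eigenbasis $v_{1}\lc v_{N}$ of $S^{(m)}(a)[b]$ for $\mathcal{B}_{m}^{XXX}$ with one-dimensional joint eigenspaces.

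Next, apply Lemma \ref{eigenvectors match}(1) to each $v_{k}$. It gives nonzero scalars $\varkappa_{k}$ with $\Psi(v_{k})=\varkappa_{k}\Psi^{gr}_{(\lambda)}(v_{k})$. In other words, $\Psi^{-1}\circ\Psi^{gr}_{(\lambda)}$ is diagonal in the basis $\{v_{k}\}$, with entries $\varkappa_{k}^{-1}$.

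Finally, take any $X\in\mathcal{B}_{m}^{XXX}$. It is diagonal in the same basis, so it commutes with $\Psi^{-1}\circ\Psi^{gr}_{(\lambda)}$. Rewriting this commutation gives $\Psi X\Psi^{-1}=\Psi^{gr}_{(\lambda)}X(\Psi^{gr}_{(\lambda)})^{-1}$, which is the claim. Concretely, one checks $\Psi X\Psi^{-1}(\Psi v_{k})=x_{k}\Psi v_{k}$ and $\widehat{\Psi^{gr}_{(\lambda)}}(X)(\Psi v_{k})=\varkappa_{k}\,\widehat{\Psi^{gr}_{(\lambda)}}(X)\Psi^{gr}_{(\lambda)}v_{k}=x_{k}\Psi v_{k}$, where $x_{k}$ is the eigenvalue of $X$ on $v_{k}$. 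Both operators therefore agree on the basis $\{\Psi v_{k}\}$.

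The main obstacle is the first step: showing that $\mathcal{B}_{m}^{XXX}$ is genuinely diagonalizable, not merely that its eigenvectors match. This needs $\widehat{\Psi^{gr}_{(\lambda)}}$ to carry generators to generators, which is exactly \eqref{deg duality} combined with Propositions \ref{tG} and \ref{XXX}. It also needs the simple-spectrum statement from \cite{IKR}, which holds on $U$. If diagonalizability failed, Lemma \ref{eigenvectors match} alone would not suffice, so this point must be checked carefully.
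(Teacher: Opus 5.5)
Your proof is correct and follows essentially the paper's route. You pull back a common eigenbasis of $\mathcal{B}_{n}^{tG}$ through $\Psi^{gr}_{(\lambda)}$, use Lemma \ref{eigenvectors match} to see that $\Psi$ and $\Psi^{gr}_{(\lambda)}$ agree up to nonzero scalars on these eigenvectors, and compare the two conjugations on that basis. Your version is slightly cleaner: you use only part (1) of Lemma \ref{eigenvectors match} (the paper also invokes part (2), which is redundant), and you state explicitly that $\widehat{\Psi^{gr}_{(\lambda)}}$ carries $\mathcal{B}_{m}^{XXX}$ onto $\mathcal{B}_{n}^{tG}$, which the paper uses only implicitly.
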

\begin{proof}
    As was mentioned in the proof of Lemma \ref{diagonalization}, for $(\xi,\lambda^{(n)})\in U$, the algebra $\mathcal{B}_{n}^{tG}(b,\xi;a,\lambda^{(n)})$ is diagonalizable.    

    Take $(\xi,\lambda^{(n)})\in\widetilde{U}\subset U$. Let $v_{1}\lc v_{N}$ be a common eigenbasis of $\mathcal{B}_{n}^{tG}(b,\xi;a,\lambda^{(n)})$. Then by Lemma \ref{eigenvectors match}, part (2), for any $i=1\lc N$, there exists nonzero $\widehat{\varkappa}_{i}\in\C$ such that
    \begin{equation}\label{F1}
        \Psi^{-1}(v_{i})=\widetilde{\varkappa}_{i}\,(\Psi^{gr}_{(\lambda)})^{-1}(v_{i}).
    \end{equation}

    Notice that $(\Psi^{gr}_{(\lambda)})^{-1}(v_{i})$, $i=1\lc N$, are common eigenvectors of $\mathcal{B}_{m}^{XXX}(a,\lambda^{(n)};b,\xi)$. Then, by Lemma \ref{eigenvectors match}, part (1), for each $i=1\lc N$, we can find nonzero $\varkappa_{i}\in\C$ such that 
    \begin{equation}\label{F2}
        \Psi((\Psi^{gr}_{(\lambda)})^{-1}(v_{i}))=\varkappa_{i}\,v_{i}.
    \end{equation}
    Combining the last formula with \eqref{F1}, we get $\widetilde{\varkappa}_{i}=\varkappa_{i}^{-1}$.

    Now, consider an arbitrary $X\in\mathcal{B}_{m}^{XXX}(a,\lambda^{(n)};b,\xi)$. In the following computation, we use formula \eqref{F1} on the second step, and we use formula \eqref{F2} and the fact that $X(\Psi^{gr}_{(\lambda)})^{-1}\,v_{i}$ is proportional to $(\Psi^{gr}_{(\lambda)})^{-1}\,v_{i}$ on the third step:
    \begin{equation*}
        \begin{split}
            \widehat{\Psi}(X)v_{i} & = \Psi X\Psi^{-1}\,v_{i} = \varkappa_{i}\Psi X (\Psi^{gr}_{(\lambda)})^{-1}\, v_{i} = \\
            & = \widetilde{\varkappa}_{i}\varkappa_{i}\, \Psi^{gr}_{(\lambda)} X (\Psi^{gr}_{(\lambda)})^{-1}\, v_{i} = \widehat{\Psi^{gr}_{(\lambda)}}(X) v_{i}.
        \end{split}
    \end{equation*}
    Since $v_{1}\lc v_{N}$ is a basis, we deduce $\widehat{\Psi}(X)= \widehat{\Psi^{gr}_{(\lambda)}}(X)$.
\end{proof}

Now we can easily prove Theorem \ref{main3}. Indeed, existence of $\widetilde{A}_{ij}^{(m)}$ follows from Proposition \ref{XXX} and the fact that the difference operator $D_{m}^{XXX}(a,\lambda^{(n)}-\boldsymbol{n};b,\xi)$ can be obtained from $D_{m}^{XXX}(a,\lambda^{(n)};b,\xi)$ using the substitution $u\mapsto u-n$. Existence of $\widetilde{A}_{ji}^{(n)}$ follows from Proposition \ref{tG} and the relation 
\begin{equation}\label{u partial}
u^{j}\partial_{u}^{j}=\prod_{i=1}^{j}(u\partial_{u}-j+1).
\end{equation}

If $(\xi,\lambda^{(n)})\in\widetilde{U}$, by relation \eqref{deg duality} and Proposition \ref{Psi match}, for each $i=1\lc m$, $j=1\lc n$, we have $\widehat{\Psi}(gr_{m-i}\,D_{ij}) = gr_{m-i}\,C_{ij}$. Comparing formulas \eqref{coef A tilde m}, \eqref{coef A tilde n}, \eqref{Prop 4.2}, and \eqref{gr D ij}, and using relation \eqref{u partial} again, we see that $\widetilde{A}^{(m)}_{ij}$ is expressed polynomially through $gr_{m-k}(D_{kl})$ in the same way as $\widetilde{A}^{(n)}_{ji}$ is expressed polynomially through $gr_{m-k}(C_{kl})$. Therefore, for $(\xi,\lambda^{(n)})\in\widetilde{U}$, we have $\widehat{\Psi}(\widetilde{A}^{(m)}_{ij})=\widetilde{A}^{(n)}_{ji}$. Since $\widetilde{A}^{(m)}_{ij}$ and $\widetilde{A}^{(n)}_{ji}$ depend polynomially on $\xi$ and $\lambda^{(n)}$, the last equality holds for any $(\xi,\lambda^{(n)})\in\C^{n+k}$, and therefore, Theorem \ref{main3} is proved.


\bibliographystyle{alpha}
\bibliography{mybibliography1}

\end{document}